    \theoremstyle{plain}
    \newtheorem{thm}{Theorem}[section]
    \newtheorem{lem}[thm]{Lemma}
    \newtheorem{prop}[thm]{Proposition}
    \newtheorem{cor}[thm]{Corollary}
    \theoremstyle{definition}
    \newtheorem{defn}{Definition}[section]
    \newtheorem{exmp}{Example}[section]
    \theoremstyle{remark}
    \newtheorem{rem}{Remark}[section]
\def\mput #1 #2/{\put{$\bullet_{#1}$} [tl] <-1mm,1mm> at #2}
\begin{document}

\centerline{\textbf{\Large Combinatorics of Exceptional Sequences of type $\mathbb{\tilde{A}}_n$}}
\medskip
\centerline{Ray Maresca} 

\begin{abstract}
\noindent
It is known that there are infinitely many exceptional sequences of quiver representations for Euclidean quivers. In this paper we study those of type $\tilde{\mathbb{A}}_n$ and classify them into finitely many parametrized families. We first give a bijection between exceptional collections and a combinatorial object known as strand diagrams. We will then realize these strand diagrams as chord diagrams and then arc diagrams on an annulus. Using arc diagrams, we will define parametrized families of exceptional collections and use arc diagrams to show that there are finitely many such families. We moreover show that these families of exceptional collections are in bijection with equivalence classes of small arc diagrams. Finally, we provide an algebraic explanation of parametrized families using the transjective component of the bounded derived category.
\end{abstract}

\section{Introduction}

\indent

Exceptional sequences are sequences of modules (equivalently representations of a quiver) that satisfy certain homological properties. They first arose in the study of vector bundles over the projective plane, [\ref{ref: vector bundles over projective plane 10}] and [\ref{ref: vector bundles over projective plane}]. Shortly after, they became a popular object in representation theory first studied in [\ref{ref: Bill Exceptional sequences}] and [\ref{ref: braid group acts on excep seq}]. For instance, Crawley-Boevey showed in [\ref{ref: Bill Exceptional sequences}] that the braid group acts transitively on the set of so-called complete exceptional sequences. Of the many different approaches one can take in the study of these sequences, their combinatorics have been of much interest. The number of so called complete exceptional sequences of Dynkin type has been studied in [\ref{ref: counting exceptional sequences}] and [\ref{ref: Exceptional sequences dynkin type}]. In this paper, we will study complete exceptional sequences of type $\tilde{\mathbb{A}}_n$. The universal cover of these quivers are quivers of type $\mathbb{A}$, whose exceptional sequences have been well studied. For instance there is a bijection between these exceptional sequences and parking functions given in [\ref{ref: bijection with parking functions}]; there is also a bijection between complete exceptional sequences and rooted labeled forests [\ref{ref: Igusa Sen rooted forests}]. Typically, exceptional sequences of type $\mathbb{A}$ are studied using chord diagrams as in [\ref{ref: chord diagrams 1}] and [\ref{ref: chord diagrams 2}]. We will begin by using a slightly different combinatorial object, then show that these are equivalent to a generalization of these chord diagrams.\\

In this paper, we will study exceptional sequences of finite length $\Bbbk Q$-modules where $Q$ is a quiver of type $\tilde{\mathbb{A}}_n$ first by using a combinatorial object introduced in [\ref{ref: combinatorics exceptional sequences type A}] known as strand diagrams. Our first result will be a bijection between exceptional collections and strand diagrams on the universal cover of $Q$, which is an infinitely long type $\mathbb{A}$ quiver. After, we will introduce two more combinatorial objects, chord and arc diagrams. These chord diagrams will be slightly different than those in [\ref{ref: chord diagrams 1}] and [\ref{ref: chord diagrams 2}]. Also, the arc diagrams we present here are very similar to those used in the study of clusters [\ref{ref: clusters are in bijection with triangulations}] and the study of tiling algebras [\ref{ref: BCS}]; however here, we will use a different convention of associating arcs to modules than the one used in the aforementioned papers in order to use them to study a different algebraic object, namely exceptional collections. The comparison of the convention used in [\ref{ref: clusters are in bijection with triangulations}] to study clusters and the one used in this paper is given in [\ref{ref: Igusa Maresca}]. \\

It is well known that there are infinitely many complete exceptional sequences of type $\tilde{\mathbb{A}}_n$; however, we will classify exceptional collections into parametrized families using arc diagrams and define so-called `small' arc diagrams. We will use these small diagrams along with the aforementioned bijection to show that there are only finitely many such families and they are in bijection with equivalence classes of these small diagrams. In [\ref{ref: Igusa Maresca}] we have shown that the number of such diagrams for a particular orientation of the quiver is given by a generalization of the Catalan numbers that count lattice paths in $\mathbb{R}^2$ as defined in [\ref{ref: Gould Generalized Catalan numbers}] and [\ref{ref: Master's Thesis Generalized Catalan Numbers}]. After we define families geometrically, we will provide an algebraic definition for parametrized families in terms of the Auslander--Reiten quiver of the transjective component of the bounded derived category. In Section \ref{sec: examples}, we compute all parametrized families of exceptional sequences of type $\tilde{\mathbb{A}}_2$ and list them in a table along side their corresponding small strand and arc diagrams. \\

\noindent
\textbf{Acknowledgments:} The author would like to thank Kiyoshi Igusa for advising him throughout this project. He would also like to thank Shujian Chen for the helpful discussions along with Simon Huynh for the assistance with constructing the diagrams. He also thanks Eric Hanson for the helpful feedback leading to enhanced readability along with an anonymous referee who helped him both shorten and provide intuition for the proof of Lemma \ref{lem: key lemma on annulus}.

\section{Preliminaries}

\indent

We wish to use the results of [\ref{ref: combinatorics exceptional sequences type A}] to describe exceptional sequences over path algebras of quivers of type $\mathbb{\tilde{A}}_n$. Thus in this section, we will introduce representations of quivers, exceptional modules and sequences, quivers of type $\mathbb{\tilde{A}}_n$, and universal covers of quivers. 

\subsection{Representations of Quivers}

\indent

Throughout, let $\Bbbk$ be an algebraically closed field. A \textbf{quiver} $Q$ is a directed graph. More formally, it is a $4$-tuple $Q = (Q_0,Q_1,s,t)$ where $Q_0$ is the \textbf{set of vertices}, $Q_1$ is the \textbf{set of arrows}, and $s,t:Q_0\rightarrow Q_1$ are maps that assign to each vertex a \textbf{starting} and \textbf{terminal} point respectively. Throughout, we will typically denote the arrows in $Q_1$ by lower case Greek letters and the vertices in $Q_0$ by numbers. A \textbf{path of length} $m \geq1$ in $Q$ is a finite sequence of arrows $\alpha_1\alpha_2\dots\alpha_m$ where $t(\alpha_i) = s(\alpha_j)$ for all $1 \leq j \leq m-1$. We denote the path algebra of $Q$ by $\Bbbk Q$. A \textbf{monomial relation} in $Q$ is given by a path of the form $\alpha_1\alpha_2\dots\alpha_m$ where $m\geq 2$. A two sided ideal of $\Bbbk Q$ is called \textbf{admissible} if $R^m_Q \subset I \subset R^2_Q$ for some $m \geq 2$ where $R_Q$ is the arrow ideal in $\Bbbk Q$. 

A \textbf{$\Bbbk$-representation} $V$ of a quiver $Q$ is an assignment of a $\Bbbk$-vector space $V_i$ for each $i\in Q_0$ and a vector space morphism $\phi_{\alpha}:V_i \rightarrow V_j$ for each $\alpha \in Q_1$ such that $s(\alpha) = i$ and $t(\alpha) = j$. We call a representation $V$ \textbf{finite dimensional} if $V_i$ is a finite dimensional vector space for all $i\in Q_0$. Let $V = (V_i, \phi_{\alpha})$ and $W = (W_i, \psi_{\alpha})$ be two representations of a quiver $Q$. A \textbf{morphism} $\theta: V \rightarrow W$ is a collection of linear maps $\theta_i:V_i\rightarrow W_i$ such that for each $\alpha \in Q_1$, we have $\psi_{\alpha} \circ \theta_{s(\alpha)} = \theta_{t(\alpha)} \circ \phi_{\alpha}$. 

We denote the \textbf{direct sum} of two representations by $V \oplus W = (V_i \oplus W_i, \phi_{\alpha} \oplus \psi_{\alpha})$. We call a representation \textbf{indecomposable} if it is not isomorphic to the direct sum of two nonzero representations. The category rep$_{\Bbbk} (Q) = \text{rep}(Q)$ that consists of finite-dimensional $\Bbbk$-representations of the quiver $Q$ as objects and morphisms of representations as morphisms forms an abelian category. Moreover, the indecomposable representations form a full subcategory denoted ind(rep$_\Bbbk(Q))$. One can equivalently view representations of a quiver $Q$ as modules over its path algebra $\Bbbk Q$, the algebra with basis the set of paths in $Q$ and with multiplication defined by concatenation or zero. It is well known that the two categories are equivalent and as a result, the terms modules and representations will be used interchangeably throughout. As a consequence, we have for any representations $V$ and $W$, well defined notions of Ext$_{\text{rep}(Q)}^i (V,W) = \text{Ext}^i(V,W)$ for all $i \geq 1$ and Hom$_{\text{rep}(Q)}(V,W) =$ Hom$(V,W)$. For more on representation theory of quivers see [\ref{ref: Blue Book}] and [\ref{ref: Schiffler Quiver Reps}].

We call a representation of a quiver $Q$ \textbf{exceptional} if End$_{\text{rep}(Q)}(V)$ is a division ring and Ext$^i (V,V) = 0$ for all $i\geq 1$. When $\Bbbk Q$ is a hereditary algebra, which is the case for quivers of type $\mathbb{\tilde{A}}_n$, the first condition implies that the representation $V$ is indecomposable and the second reduces to Ext$^1 (V,V) =$ Ext$(V,V) = 0$ as higher extension groups vanish. It is important to note that not all indecomposable representations of a quiver of type $\mathbb{\tilde{A}}_n$ are exceptional. An \textbf{exceptional sequence} $\xi = (V_1, \dots V_k)$ is a sequence of exceptional representations of $Q$ such that Hom$(V_i,V_j) = 0 =$ Ext$(V_i,V_j)$ for all $j < i$. An \textbf{exceptional collection} is a \textit{set} of representations $\xi=\{V_1, \dots, V_k\}$ such that the $V_j$ can be ordered in such a way that they make an exceptional sequence. It is well known that for a quiver with $n$ vertices, if $(V_1, \dots V_k)$ is an exceptional sequence, then $k \leq n$ [\ref{ref: Bill Exceptional sequences}]. When $k = n$, we call $\xi$ a \textbf{complete} exceptional sequence (collection). From now on, we adopt the convention that `exceptional sequence (collection)' means complete. For more on exceptional sequences see [\ref{ref: Bill Exceptional sequences}] and [\ref{ref: Exceptional sequences dynkin type}].

\subsection{Quivers of Type $\mathbb{\tilde{A}}_n$}

\indent

A \textbf{quiver of type $\mathbb{\tilde{A}}_n$} is one whose underlying graph is of the form shown in Figure \ref{fig: underlying graph and example of orientation}. To make a graph of type $\tilde{\mathbb{A}}_n$ into a quiver, we define an \textbf{orientation vector} $\bm{\varepsilon} = (\varepsilon_0, \dots , \varepsilon_n) \in \{-,+\}^{n+1}$. We then define $\alpha_i \in Q_1$ as

\vspace{-.7cm}

\begin{center}
\begin{multicols}{2}

 \begin{displaymath}
   \alpha_i = \left\{
     \begin{array}{lr}
       i \rightarrow i+1 & : \varepsilon_i = +\\
       i \leftarrow i+1 & :  \varepsilon_i = -
     \end{array}
   \right.
\end{displaymath}

\columnbreak

 \begin{displaymath}
   \alpha_0 = \left\{
     \begin{array}{lr}
       n+1 \rightarrow 1 & : \varepsilon_0 = +\\
       n+1 \leftarrow 1 & :  \varepsilon_0 = -
     \end{array}
   \right.
\end{displaymath}

\end{multicols}
\end{center}

Note that when considering our vertex set modulo $|Q_0| = n+1$, the convention for $\alpha_0$ is consistent with that of $\alpha_i$. So long as $\varepsilon_i \neq \varepsilon_j$ for some $i$ and $j$, these quivers are hereditary and tame. By \textbf{hereditary}, we mean that submodules of projective $\Bbbk Q$-modules are projective and by \textbf{tame}, we mean there are infinitely many indecomposable $\Bbbk Q$-modules and for all $n\in\mathbb{N}$, all but finitely many isomorphism classes of $n$-dimensional indecomposables occur in a finite number of one-parameter families. It is known that the module category, hence the Auslander--Reiten quiver $\Gamma_{\Bbbk Q}$, of a tame hereditary algebra can be partitioned into three sections, the preprojective, regular and preinjective components. For $\tau$ the Auslander--Reiten translate, the preprojective and preinjective components are defined as follows. 

\begin{figure}
    \centering
    \tikzset{every picture/.style={line width=0.75pt}} 

\begin{tikzpicture}[x=0.75pt,y=0.75pt,yscale=-1,xscale=1]

\draw    (93.22,224.01) -- (108.41,258.18) ;
\draw [shift={(109.22,260.01)}, rotate = 246.04] [color={rgb, 255:red, 0; green, 0; blue, 0 }  ][line width=0.75]    (10.93,-3.29) .. controls (6.95,-1.4) and (3.31,-0.3) .. (0,0) .. controls (3.31,0.3) and (6.95,1.4) .. (10.93,3.29)   ;
\draw    (163.22,270.01) -- (122.22,270.01) ;
\draw [shift={(120.22,270.01)}, rotate = 360] [color={rgb, 255:red, 0; green, 0; blue, 0 }  ][line width=0.75]    (10.93,-3.29) .. controls (6.95,-1.4) and (3.31,-0.3) .. (0,0) .. controls (3.31,0.3) and (6.95,1.4) .. (10.93,3.29)   ;
\draw    (173.22,262.01) -- (193.27,224.77) ;
\draw [shift={(194.22,223.01)}, rotate = 118.3] [color={rgb, 255:red, 0; green, 0; blue, 0 }  ][line width=0.75]    (10.93,-3.29) .. controls (6.95,-1.4) and (3.31,-0.3) .. (0,0) .. controls (3.31,0.3) and (6.95,1.4) .. (10.93,3.29)   ;
\draw    (187.22,210.01) -- (150.75,179.3) ;
\draw [shift={(149.22,178.01)}, rotate = 40.1] [color={rgb, 255:red, 0; green, 0; blue, 0 }  ][line width=0.75]    (10.93,-3.29) .. controls (6.95,-1.4) and (3.31,-0.3) .. (0,0) .. controls (3.31,0.3) and (6.95,1.4) .. (10.93,3.29)   ;
\draw    (97.22,205.01) -- (135.54,180.1) ;
\draw [shift={(137.22,179.01)}, rotate = 146.98] [color={rgb, 255:red, 0; green, 0; blue, 0 }  ][line width=0.75]    (10.93,-3.29) .. controls (6.95,-1.4) and (3.31,-0.3) .. (0,0) .. controls (3.31,0.3) and (6.95,1.4) .. (10.93,3.29)   ;
\draw    (68.22,108.01) -- (106.22,108.01) ;
\draw    (125.22,108.01) -- (147.22,108.01) ;
\draw    (207.22,108.01) -- (234.22,108.01) ;
\draw    (71.22,99.01) -- (141.22,55.01) ;
\draw    (165.22,56.01) -- (232.22,101.01) ;

\draw (86,206.4) node [anchor=north west][inner sep=0.75pt]    {$1$};
\draw (78,240.4) node [anchor=north west][inner sep=0.75pt]    {$\alpha_1$};
\draw (108,261.4) node [anchor=north west][inner sep=0.75pt]    {$2$};
\draw (140,275.4) node [anchor=north west][inner sep=0.75pt]    {$\alpha_2$};
\draw (164,260.4) node [anchor=north west][inner sep=0.75pt]    {$3$};
\draw (188,240.4) node [anchor=north west][inner sep=0.75pt]    {$\alpha_3$};
\draw (188.22,206.41) node [anchor=north west][inner sep=0.75pt]    {$4$};
\draw (165,180.4) node [anchor=north west][inner sep=0.75pt]    {$\alpha_4$};
\draw (138,168.4) node [anchor=north west][inner sep=0.75pt]    {$5$};
\draw (100,180.4) node [anchor=north west][inner sep=0.75pt]    {$\alpha_5$};
\draw (58,99.4) node [anchor=north west][inner sep=0.75pt]    {$1$};
\draw (111,99.4) node [anchor=north west][inner sep=0.75pt]    {$2$};
\draw (234,98.4) node [anchor=north west][inner sep=0.75pt]    {$n$};
\draw (133,35.4) node [anchor=north west][inner sep=0.75pt]    {$n+1$};
\draw (163,102.4) node [anchor=north west][inner sep=0.75pt]    {$\dotsc $};
\draw (309,70) node [anchor=north west][inner sep=0.75pt]   [align=left] {The underlying graph of a quiver of type $\tilde{\mathbb{A}}_n$};
\draw (316,218) node [anchor=north west][inner sep=0.75pt]   [align=left] {A quiver of type $\tilde{\mathbb{A}}_4$ with $\bm{\varepsilon}= (-,+,-,+,+)$};
\end{tikzpicture}
\caption{An Example of $Q^{\bm\varepsilon}$}
\label{fig: underlying graph and example of orientation}
\end{figure}
\begin{defn}{\color{white} .}
\begin{itemize}
\item A connected component $C$ of the Auslander-Reiten quiver $\Gamma_{\Bbbk Q}$ of $\Bbbk Q$ is a \textbf{preprojective component} if the following hold:
\begin{enumerate}
\item Each indecomposable module $M\in C$ is isomorphic to $\tau_{\Bbbk Q}^{-i} (P)$ for some $i\geq 0$ and some indecomposable projective $\Bbbk Q$-module $P$. 
\item $C$ does not have any oriented cycles. 
\end{enumerate}

\item A connected component $C$ of the Auslander-Reiten quiver $\Gamma_{\Bbbk Q}$ of $\Bbbk Q$ is a \textbf{preinjective component} if the following hold:
\begin{enumerate}
\item Each indecomposable module $M\in C$ is isomorphic to $\tau_{\Bbbk Q}^{i} (I)$ for some $i\geq 0$ and some indecomposable injective $\Bbbk Q$-module $I$. 
\item $C$ does not have any oriented cycles. 
\end{enumerate}
\end{itemize}
\end{defn}

For $\mathbb{\tilde{A}}_n$ quivers, the regular component consists of the left, right and homogeneous tubes, all of which are stable under AR translation $\tau$. A regular module with no proper regular submodules is called \textbf{quasi-simple} or \textbf{simple regular}. The number of quasi-simple modules in a tube is called the \textbf{rank} of the tube. For any regular module $M$, there exists a chain $$ 0=M_0 \subset M_1 \subset \dots\subset M_l = M $$ of regular submodules of $M$ with $M_i/M_{i-1}$ a quasi-simple module for any $1\leq i \leq l$, which we denote by $rl(M)$. It is well known that in type $\tilde{\mathbb{A}}$, the preprojective and preinjective modules are exceptional. To classify which regular modules are exceptional, the following lemma, whose proof along with a more general statement can be found in [\ref{ref: blue book 2}], is useful.

\begin{lem}\label{lem: hom and ext for regulars}
Let $A = \Bbbk Q$ for a Euclidean quiver $Q$, $T$ be a stable tube of rank $r\geq1$, and $M$ be an indecomposable module in $T$.
\begin{enumerate}
\item If $k\geq0$ is an integer such that $kr<rl(M)\leq(k+1)r$, then $\text{dim}_\Bbbk\text{End}(M) = k+1$.
\item If $k\geq0$ is an integer such that $kr<rl(M)\leq(k+1)r$, then $\text{dim}_\Bbbk\text{Ext}(M,M) = k$.
\end{enumerate}
\end{lem}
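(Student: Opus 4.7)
The plan is to exploit the combinatorial structure of a stable tube of rank $r$. Indecomposables in $T$ are parametrized as $M = M[i, \ell]$, where $i \in \mathbb{Z}/r\mathbb{Z}$ picks out the quasi-simple quasi-socle $E_i$ (with $\tau E_{i+1} \cong E_i$) and $\ell = rl(M) \geq 1$ is the quasi-length, so that $\tau M[i, \ell] \cong M[i-1, \ell]$. The tube carries standard short exact sequences such as $0 \to M[i, \ell-1] \to M[i, \ell] \to E_{i+\ell-1} \to 0$ and $0 \to E_i \to M[i, \ell] \to M[i+1, \ell-1] \to 0$, and these, together with the mesh relations on the quasi-simples, will drive the computation.

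For (1), I would proceed by induction on $k$. When $k = 0$, so $\ell \leq r$, the quasi-composition factors $E_i, E_{i+1}, \ldots, E_{i+\ell-1}$ of $M[i, \ell]$ are pairwise nonisomorphic; applying $\text{Hom}(-, M[i, \ell])$ to the short exact sequences above and using that $\text{Hom}(E_j, E_s) \cong \Bbbk$ when $j \equiv s \pmod{r}$ and vanishes otherwise, one shows that every endomorphism of $M[i, \ell]$ is a scalar multiple of the identity, so $\dim_\Bbbk \text{End}(M[i, \ell]) = 1$. For the inductive step, I would apply $\text{Hom}(-, M[i, \ell])$ to the short exact sequence $0 \to M[i, \ell - r] \to M[i, \ell] \to M[i + \ell - r, r] \to 0$: the resulting long exact sequence, together with the induction hypothesis $\dim_\Bbbk \text{End}(M[i, \ell - r]) = k$ and vanishing of the relevant Ext groups forced by the mesh structure, produces exactly one additional independent endomorphism, giving $\dim_\Bbbk \text{End}(M[i, \ell]) = k+1$.

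For (2), I would invoke the Auslander--Reiten formula $\text{Ext}(M, M) \cong D\overline{\text{Hom}}(M, \tau M)$; since a stable tube contains no projective or injective modules, this reduces to $\text{Ext}(M, M) \cong D\,\text{Hom}(M[i, \ell], M[i-1, \ell])$. A parallel induction then gives the count: in the base case $\ell \leq r$ the quasi-supports of $M[i, \ell]$ and $M[i-1, \ell]$ do not align to allow any nonzero map, while each increment of $\ell$ by $r$ introduces exactly one new wrap-around map, yielding $k$ maps for $kr < \ell \leq (k+1)r$. The main obstacle is isolating the correct short exact sequences and verifying linear independence of the constructed maps; both are controlled by the mesh combinatorics of the tube, and once established the counting is routine.
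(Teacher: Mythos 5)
The paper itself does not prove this lemma; it is cited from [\ref{ref: blue book 2}], so your proposal is a genuine alternative route rather than a reconstruction. For part~(1) the framework is sound, but the inductive step as written does not quite close: applying $\text{Hom}(-,M[i,\ell])$ to your short exact sequence produces $\text{Hom}(M[i,\ell-r],M[i,\ell])$ in the long exact sequence, not $\text{End}(M[i,\ell-r])$, so the induction hypothesis is not the term that actually appears, and you must further argue that the connecting map into $\text{Ext}^1(M[i+\ell-r,r],M[i,\ell])$ vanishes. Both points are fixable. An induction-free route is available via standardness of the tube: every nonzero map between indecomposables in $T$ factors uniquely up to scalar as a quasi-quotient epimorphism followed by a quasi-submodule monomorphism, so morphism spaces are computed by aligning indices. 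For $\text{End}(M[i,\ell])$ the matching quasi-quotients are $M[i+t,\ell-t]$ with $t\equiv 0\pmod{r}$ and $0\leq t<\ell$, giving $\dim_\Bbbk\text{End}(M[i,\ell])=1+\lfloor(\ell-1)/r\rfloor$, which equals $k+1$ exactly when $kr<\ell\leq(k+1)r$.

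There is, however, a genuine gap in part~(2), and it is not repairable as stated. Your base case asserts $\text{Hom}(M[i,\ell],M[i-1,\ell])=0$ for $\ell\leq r$, but at $\ell=r$ the composite $M[i,r]\twoheadrightarrow E_{i+r-1}=E_{i-1}\hookrightarrow M[i-1,r]$ is a nonzero map. The same direct count shows $\dim_\Bbbk\text{Hom}(M[i,\ell],M[i-1,\ell])=\lfloor\ell/r\rfloor$, which is $0$ for $\ell<r$ but $1$ for $\ell=r$. This is forced by the Euler form: the dimension vector of $M[i,r]$ is the null root $\delta$, and $\langle\delta,\delta\rangle=0$ gives $\dim_\Bbbk\text{End}(M[i,r])=\dim_\Bbbk\text{Ext}(M[i,r],M[i,r])=1$. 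Concretely, the quasi-simple $E$ in a homogeneous tube ($r=1$, $rl(E)=1$, $k=0$) already has $\text{Ext}^1(E,E)\cong\Bbbk\neq 0$. So part~(2) of the lemma, with the inequalities as written, is false whenever $r$ divides $rl(M)$; the correct hypothesis for the $\text{Ext}$ statement is $kr\leq rl(M)<(k+1)r$, equivalently $\dim_\Bbbk\text{Ext}(M,M)=\lfloor rl(M)/r\rfloor$. Your induction breaks at exactly that boundary, and no amount of bookkeeping in the inductive step can rescue it because the claimed conclusion is wrong there. (The paper only invokes the lemma in the regime $rl(M)<r$, inside the proof of Lemma~\ref{lem: regulars are small}, so nothing downstream is affected; but a self-contained proof must first correct the boundary.)
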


Moreover, the path algebras are also string algebras, simplifying the aforementioned tripartite classification of $\Gamma_{\Bbbk Q}$. For more on representation theory of euclidean quivers see [\ref{ref: Blue Book}] and [\ref{ref: blue book 2}].

\subsection{String Algebras/Gentle Algebras}

Throughout the paper, we will appeal to the fact that the path algebra of a quiver of type $\tilde{\mathbb{A}}$ is a gentle algebra.

\begin{defn}
For an admissible ideal $I$, the algebra $B = \Bbbk Q/ I$ is a \textbf{string algebra} if
\begin{enumerate}
\item At each vertex of $Q$, there are at most two incoming arrows and at most two outgoing arrows.
\item For each arrow $\beta$ there is at most one arrow $\alpha$ and at most one arrow $\gamma$ such that $\alpha\beta \notin I$ and $\beta\gamma \notin I$.  \\

If moreover, we have the following two conditions, the string algebra $B$ is called \textbf{gentle}.
\item The ideal $I$ is generated by a set of monomials of length two.
\item For every arrow $\alpha$, there is at most one $\beta$ and one $\gamma$ such that $0 \neq \alpha\beta \in I$ and $0 \neq \gamma\alpha \notin I$.
\end{enumerate}
\end{defn}

It is well known that the indecomposable modules over string algebras are either string or band modules [\ref{ref: String Algebra Info}]. For $\Bbbk Q/I$ a sting algebra, to define string modules, we first define for $a\in Q_1$ a \textbf{formal inverse} $\alpha^{-1}$, such that $s(\alpha^{-1}) = t(\alpha)$ and $t(\alpha^{-1}) = s(\alpha)$. Let $Q_1^{-1}$ denote the set of formal inverses of arrows in $Q_1$. We define a \textbf{walk} as a sequence $\omega = \omega_0\dots \omega_r$ such that for all $i\in\{0,1,\dots,r\}$, we have $t(\omega_i) = s(\omega_{i+1})$ where $\omega_i \in Q_1 \cup Q_1^{-1}$. A \textbf{string} is a walk $\omega$ with no sub-walk $\alpha\alpha^{-1}$ or $\alpha^{-1}\alpha$. A \textbf{band} $\beta = \beta_1\dots \beta_n$ is a cyclic string, that is, $t(\beta_n) = s(\beta_1)$. We define the \textbf{start or beginning} of a string $S = \omega_0\dots \omega_r$, denoted by $s(S)$, as $s(\omega_0)$. Similarly, we define the \textbf{end} of a string $S$, denoted by $t(S)$, as $t(\omega_r)$. For quivers of type $\mathbb{\tilde{A}}_n$ the band modules lie in the homogeneous tubes and we can classify in which component of the Auslander-Reiten quiver the string modules reside by their shape, as we will see in Remark \ref{rem: prepro and preinj strands defn}. For quivers of type $\tilde{\mathbb{A}}$, we take the convention that all named strings move in the counter-clockwise direction around the quiver $Q$. We denote by $ij_k$ the string module of length $k$ associated to the walk $\omega_1\cdots \omega_{k-1}$ where $s(\omega_1)=i+1$ and $t(\omega_{k-1})=j$. For $k=1$, $(j-1)j_1$ denotes the simple module at vertex $j$, associated to the walk $e_j$ where $e_j$ is the lazy path at vertex $j$. Note that beginning at the previous vertex is not the usual convention, however it will be much more convenient for this paper since it helps in classifying extensions between string modules. When drawing the graph associated to strings, we take the convention that the head of each arrow is at the bottom. An example of a graph of a string and its associated representation can be seen in Figure \ref{fig: example of string module}.

\begin{figure}[h!]
    \centering

\tikzset{every picture/.style={line width=0.75pt}} 

\begin{tikzpicture}[x=0.75pt,y=0.75pt,yscale=-1,xscale=1]

\draw    (263.22,176.01) -- (278.41,210.18) ;
\draw [shift={(279.22,212.01)}, rotate = 246.04] [color={rgb, 255:red, 0; green, 0; blue, 0 }  ][line width=0.75]    (10.93,-3.29) .. controls (6.95,-1.4) and (3.31,-0.3) .. (0,0) .. controls (3.31,0.3) and (6.95,1.4) .. (10.93,3.29)   ;
\draw    (333.22,222.01) -- (292.22,222.01) ;
\draw [shift={(290.22,222.01)}, rotate = 360] [color={rgb, 255:red, 0; green, 0; blue, 0 }  ][line width=0.75]    (10.93,-3.29) .. controls (6.95,-1.4) and (3.31,-0.3) .. (0,0) .. controls (3.31,0.3) and (6.95,1.4) .. (10.93,3.29)   ;
\draw    (343.22,214.01) -- (363.27,176.77) ;
\draw [shift={(364.22,175.01)}, rotate = 118.3] [color={rgb, 255:red, 0; green, 0; blue, 0 }  ][line width=0.75]    (10.93,-3.29) .. controls (6.95,-1.4) and (3.31,-0.3) .. (0,0) .. controls (3.31,0.3) and (6.95,1.4) .. (10.93,3.29)   ;
\draw    (357.22,162.01) -- (320.75,131.3) ;
\draw [shift={(319.22,130.01)}, rotate = 40.1] [color={rgb, 255:red, 0; green, 0; blue, 0 }  ][line width=0.75]    (10.93,-3.29) .. controls (6.95,-1.4) and (3.31,-0.3) .. (0,0) .. controls (3.31,0.3) and (6.95,1.4) .. (10.93,3.29)   ;
\draw    (267.22,157.01) -- (305.54,132.1) ;
\draw [shift={(307.22,131.01)}, rotate = 146.98] [color={rgb, 255:red, 0; green, 0; blue, 0 }  ][line width=0.75]    (10.93,-3.29) .. controls (6.95,-1.4) and (3.31,-0.3) .. (0,0) .. controls (3.31,0.3) and (6.95,1.4) .. (10.93,3.29)   ;
\draw    (177.22,88.96) -- (225.22,32.96) ;
\draw    (237.22,29.96) -- (267.22,49.96) ;
\draw    (287.22,61.96) -- (325.22,86.96) ;
\draw    (344,89) -- (393.22,38.96) ;
\draw    (409,38) -- (457.22,83.96) ;
\draw    (110,33.4)-- (164,88) ;

\draw (256,158.4) node [anchor=north west][inner sep=0.75pt]    {$\Bbbk$};
\draw (278,210.4) node [anchor=north west][inner sep=0.75pt]    {$\Bbbk^2$};
\draw (334,212.4) node [anchor=north west][inner sep=0.75pt]    {$\Bbbk$};
\draw (358.22,158.41) node [anchor=north west][inner sep=0.75pt]    {$\Bbbk$};
\draw (308,120.4) node [anchor=north west][inner sep=0.75pt]    {$\Bbbk$};
\draw (269,128.4) node [anchor=north west][inner sep=0.75pt]    {$1$};
\draw (250,189.4) node [anchor=north west][inner sep=0.75pt]    {$1$};
\draw (347,128.4) node [anchor=north west][inner sep=0.75pt]    {$1$};
\draw (361,194.4) node [anchor=north west][inner sep=0.75pt]    {$1$};
\draw (299.81,231.4) node [anchor=north west][inner sep=0.75pt]  [xslant=-0.01]  {$\begin{bmatrix}
0\\
1
\end{bmatrix}$};
\draw (101,20.4) node [anchor=north west][inner sep=0.75pt]    {$1$};
\draw (166,86.4) node [anchor=north west][inner sep=0.75pt]    {$2$};
\draw (226,15.4) node [anchor=north west][inner sep=0.75pt]    {$3$};
\draw (271.22,47.36) node [anchor=north west][inner sep=0.75pt]    {$4$};
\draw (329,82.4) node [anchor=north west][inner sep=0.75pt]    {$5$};
\draw (395,20.4) node [anchor=north west][inner sep=0.75pt]    {$1$};
\draw (460,81.4) node [anchor=north west][inner sep=0.75pt]    {$2$};

\end{tikzpicture}

\caption{The graph of a $\displaystyle 12_{6}$ preprojective string associated to the walk $\alpha_2^{-1}\alpha_3\alpha_4\alpha_5^{-1}\alpha_1$ over the path algebra of the quiver in Figure \ref{fig: underlying graph and example of orientation} and its corresponding indecomposable representation.}
    \label{fig: example of string module}
\end{figure}

String modules are useful, since their combinatorial structure can be used to describe both the AR translate $\tau$ [\ref{ref: Butler, Ringel}], and morphisms/extensions between string modules. To provide the description of the AR translate, we need some definitions. The following lemma follows from the definition of a string algebra.

\begin{lem}
Let $S$ be a string of positive length and let $\varepsilon \in \{ Q, Q^{-1} \}$. There is at most one way to add an arrow preceding $s(S)$ whose orientation agrees with $\varepsilon$, such that the resulting walk is still a string. Similarly there is at most one way to add such an arrow following $t(S)$.
\end{lem}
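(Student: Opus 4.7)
The plan is to run a short case analysis that invokes the two defining conditions of a string algebra. Set $v = s(S)$ and let $\omega_0$ denote the first letter of the walk underlying $S$, which belongs either to $Q_1$ or to $Q_1^{-1}$. Let $\omega_{-1}$ be the hypothetical letter we wish to prepend; the parameter $\varepsilon$ records whether $\omega_{-1}\in Q_1$ or $\omega_{-1}\in Q_1^{-1}$. To conclude uniqueness I need to rule out all but one choice of $\omega_{-1}$ using (1) the bound of two incoming/outgoing arrows at each vertex, and (2) the fact that for any arrow $\beta$ there is at most one $\alpha$ with $\alpha\beta\notin I$.

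First I would handle $\omega_{-1}\in Q_1$, so $t(\omega_{-1}) = v$ and $\omega_{-1}$ is an incoming arrow at $v$. There are two subcases. If $\omega_0\in Q_1$, the word $\omega_{-1}\omega_0$ is a length two path, and for the concatenated walk to remain a string this composition must avoid $I$; condition (2) applied to $\beta = \omega_0$ then gives at most one eligible $\omega_{-1}$. If instead $\omega_0 \in Q_1^{-1}$ with underlying arrow $\omega_0^{\op}$ (itself incoming at $v$), the only obstruction is that $\omega_{-1}\ne \omega_0^{\op}$ in order to avoid the forbidden pattern $\alpha\alpha^{-1}$; condition (1) bounds the number of incoming arrows at $v$ by two, leaving at most one other choice.

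The case $\omega_{-1}\in Q_1^{-1}$ is the mirror image, with $\omega_{-1}^{\op}$ an outgoing arrow at $v$. If $\omega_0\in Q_1$ is also outgoing at $v$, I need $\omega_{-1}^{\op}\ne \omega_0$ to avoid $\alpha^{-1}\alpha$, and condition (1) gives at most two outgoing arrows at $v$, hence at most one other. If $\omega_0\in Q_1^{-1}$, the pair $(\omega_{-1}^{\op})^{-1}(\omega_0^{\op})^{-1}$ is a valid string precisely when the direct composition $\omega_0^{\op}\,\omega_{-1}^{\op}$ avoids $I$, and condition (2) applied with $\beta = \omega_0^{\op}$ again supplies uniqueness. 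The statement for appending to $t(S)$ is obtained by applying the prepending argument to the reversed string $S^{-1}$ (with each letter inverted), so it follows from the same four subcases. There is no serious obstacle beyond organizing the bookkeeping; the content is entirely packaged in the two string algebra axioms.
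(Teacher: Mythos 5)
Your proposal is correct, and it fills in exactly what the paper leaves implicit: the text only remarks that the lemma ``follows from the definition of a string algebra'' and gives no argument, so there is no distinct proof in the paper to contrast against. The four subcases you run through are the standard ones, and the bookkeeping is right: when $\omega_{-1}$ and $\omega_0$ have the same orientation (both direct, or both when passing to formal inverses) the constraint is that the underlying length-two path avoids $I$, so axiom~(2) delivers uniqueness; when they have opposite orientations the only constraint is to avoid the pattern $\alpha\alpha^{-1}$ or $\alpha^{-1}\alpha$, and axiom~(1) bounds the candidate arrows at $v$ by two, one of which is ruled out. The reduction of the $t(S)$ case to the $s(S)$ case by reversing $S$ is clean. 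One small point worth flagging for the write-up: the paper's stated definition of a string only forbids $\alpha\alpha^{-1}$ and $\alpha^{-1}\alpha$ and does not explicitly forbid subpaths lying in $I$; your argument silently uses the standard (fuller) definition of a string, which is the one the paper actually needs. Since the paper's main case is $\tilde{\mathbb{A}}_n$ where $I=0$ this distinction evaporates in applications, but for a general string algebra you should say explicitly that you are taking strings to avoid relations.
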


If a string $S$ can be extended at its end by a direct arrow, then we can add a direct arrow at $t(S)$, followed by adding as many inverse arrows as possible. This operation is called \textbf{adding a cohook} at $t(S)$. Similarly, if $S$ can be extended by an inverse arrow at $s(S)$, then we can add an inverse arrow at $s(S)$, followed by as many direct arrows as possible. This operation is called \textbf{adding a cohook} at $s(S)$. The inverse operation is called \textbf{deleting a cohook}. To do this, we find the last direct arrow in $S$ and remove it along with all the subsequent inverse arrows. This operation is called \textbf{deleting a cohook} at the end of $S$. Similarly, we can find the first inverse arrow in $S$ and remove it along with all the preceding direct arrows. This operation is called \textbf{deleting a cohook} at the start of $S$. Note that deleting a cohook at the end (start) of $S$ may not be defined if $S$ does not any direct (inverse) arrows.

There are dual notions to this, namely \textbf{adding a hook} and \textbf{deleting a hook} respectively. If a string $S$ can be extended at its start by a direct arrow, we add the direct arrow at $s(S)$ followed by as many inverse arrows as possible. This operation is called \textbf{adding a hook} at the start of $S$. If a string $S$ can be extended at its end by adding an inverse arrow, we add this inverse arrow to $S$ along with as many direct arrows as possible. This operation is called \textbf{adding a hook} at the end of $S$. To \textbf{delete a hook} from the end of $S$, we find the last inverse arrow in $S$, and remove it along with all its subsequent direct arrows. Analogously, to \textbf{delete a hook} from the start of $S$, we find the first direct arrow of $S$ and remove it along with all preceding inverse arrows. Again, note that deleting a hook at the end (start) of $S$ may not be defined if $S$ does not any inverse (direct) arrows. It is known that in type $\tilde{\mathbb{A}}$, all irreducible morphisms in the preprojective (preinjective) component are given by adding hooks (deleting cohooks). With these combinatorial notions, the following theorem was proven in [\ref{ref: Butler, Ringel}].

\begin{thm}\label{thm: tau of string algebra}
Let $B = \Bbbk Q/I$ be a string algebra, and let $S$ be a string. At either end of $S$, if it is possible to add a cohook, do it. Then, at the ends at which it was not possible to add a cohook, delete a hook. The result is $\tau_BS$. Inversely, at either end of $S$, if it is possible to delete a cohook, do it. Then, at the ends at which it was not possible to delete a cohook, add a hook. The result is $\tau^{-1}_BS$. \hfill \qed
\end{thm}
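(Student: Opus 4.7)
The strategy is to compute $\tau_B M(S)$ directly from its defining formula $\tau_B = D \circ \mathrm{Tr}$, where $\mathrm{Tr}$ is the Auslander--Reiten transpose and $D = \mathrm{Hom}_\Bbbk(-,\Bbbk)$. To do this I would first construct an explicit minimal projective presentation $P_1 \to P_0 \twoheadrightarrow M(S) \to 0$ using the combinatorics of strings. The top of $M(S)$ is supported at the \emph{peak} positions of $S$---vertices $v$ where the local shape is $\cdot \leftarrow v \rightarrow \cdot$ with both arrows direct, together with appropriately behaved endpoints of $S$. Since $B$ is a string algebra, each indecomposable projective $P(v)$ is itself a string module whose string is a ``$V$-shape'' obtained by following the (at most two) unique maximal direct-arrow paths out of $v$ allowed by $I$. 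Thus $P_0 = \bigoplus_{v \text{ peak}} P(v)$ and each summand $P(v)$ surjects onto the maximal sub-string of $S$ containing $v$.

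Next I identify $\Omega M(S) = \ker(P_0 \to M(S))$ by comparing, at each peak $v$, the string of $P(v)$ with the portion of $S$ that it covers. The ``overhangs'' of the various $P(v)$ beyond $S$ assemble into a direct sum of string modules contributing one summand at each interior valley of $S$ and one summand at each end of $S$ where the ambient projective extends further than $S$ does. Whether an end of $S$ produces such an overhang depends precisely on whether $S$ can be extended at that end by a direct arrow---equivalently whether a cohook can be added there. If it can, the overhang is non-trivial and contributes to $P_1$; if not, that end of $M(S)$ already coincides with the socle of $P(v)$, no $P_1$-summand appears on that side, and the presentation effectively ``cuts off'' the last hook of $S$. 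This dichotomy is the combinatorial shadow of the two alternative operations in the theorem.

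To conclude I apply $\mathrm{Hom}_B(-,B)$ to $P_1 \to P_0$, take the cokernel, and dualize with $D$. The functor $D\mathrm{Hom}_B(-,B)$ carries indecomposable projective string modules ($V$-shapes peaked at $v$) to the corresponding indecomposable injective string modules ($\Lambda$-shapes with valley at $v$), reversing the roles of direct and inverse arrows along the local pieces. Tracing this reversal at each end of $S$: the end that contributed a cohook-type overhang becomes an end of $\tau_B S$ with that cohook attached, while the end that admitted no cohook becomes an end of $\tau_B S$ with the last hook of $S$ removed. The statement for $\tau^{-1}_B$ follows by the dual computation applied to a minimal injective copresentation via $\tau^{-1} = \mathrm{Tr} \circ D$, or equivalently by verifying directly that ``add cohook / delete hook'' and ``delete cohook / add hook'' are mutually inverse operations on the ends of strings.

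The main obstacle will be the case analysis at the two endpoints of $S$ treated simultaneously: at each end there are four local types (last arrow direct or inverse, and $S$ extendable by a direct arrow or not), and in every case one must verify that the prescribed operation yields a walk that is in fact a string---that it does not traverse a relation in $I$---which is precisely where the string/gentle hypothesis is used essentially. One must also dispose of the degenerate cases in which $M(S)$ is projective, so that $\tau_B M(S) = 0$ and both end-operations together must produce the empty walk, as well as the length-zero case of a simple module.
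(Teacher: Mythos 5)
The paper does not prove this theorem; it is cited from Butler--Ringel [\ref{ref: Butler, Ringel}], who argue by writing down candidate almost split sequences directly from the canonical maps between string modules given by adding and deleting hooks, and then verifying the almost split property. Your route---computing $\tau_B = D\,\mathrm{Tr}$ from an explicit minimal projective presentation $P_1 \to P_0 \twoheadrightarrow M(S)$ and chasing it through the Nakayama functor $\nu$---is therefore a genuinely different, ``from the definition'' argument, and it can be made to work, but two points need tightening. First, the assertion that each $P(v)$ is itself a string module (the two maximal direct paths out of $v$ glued at $v$), which is what makes $\Omega M(S)$ tractable, requires $I$ to be generated by zero relations. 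The paper's stated definition of ``string algebra'' lists only conditions 1 and 2, which is the definition of a special biserial algebra and does not force this, so you should add the monomial hypothesis explicitly; it is automatic for the gentle, hereditary algebras the paper actually uses. Second, the claim that the ``overhangs'' of the covering projectives assemble into $\Omega M(S)$ with one segment per interior valley, and that the presence or absence of an end overhang is governed exactly by whether $S$ is extendable by a direct arrow at that end, is asserted rather than derived. That end dichotomy \emph{is} the theorem, and proving it is precisely the case analysis you correctly flag as the obstacle; as it stands the sketch assumes its conclusion at the crucial step. (A small cosmetic slip: with the paper's convention of arrow heads at the bottom, $P(v)$ is $\Lambda$-shaped with peak at $v$ and $I(v)$ is $V$-shaped with valley at $v$; you have the two letter shapes interchanged.)
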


A basis for the space of all morphisms between two strings was given by Crawley-Boevey in [\ref{ref: CB strings}]. Schr{\"o}er then reformulated the basis given by Crawley-Boevey in [\ref{ref: Schroer}]. This reformulation was then used in [\ref{ref: BDMTY}] to provide a basis for the vectorspace of extensions between two strings. Since we will be using these results of [\ref{ref: BDMTY}] throughout the paper, we will recite some of them here. The following definition follows Schr{\"o}er's reformulation in [\ref{ref: Schroer}].

\begin{defn}
For $C$ a string in the quiver $Q$, the set of all factorizations of $C$ is $$\mathcal{P}(C) = \{(F,E,D) : F, E, \text{ and } D \text{ are strings of $Q$ and $C = FED$} \}.$$ A triple $(F,E,D) \in \mathcal{P}(C)$ is called a \textbf{quotient factorization} of $C$ if the following hold:
\begin{enumerate}
\item $D = e_{t(E)}$ or $D = \gamma D'$ with $\gamma \in Q_1$.
\item $F = e_{s(E)}$ or $F = F'\theta$ with $\theta \in Q_1^{-1}$.
\end{enumerate} 
We denote the set of all quotient factorizations of $C$ by $\mathcal{F}(C)$. A triple $(F,E,D) \in \mathcal{P}(C)$ is called a \textbf{submodule factorization} of $C$ if the following hold:
\begin{enumerate}
\item $D = e_{t(E)}$ or $D = \gamma D'$ with $\gamma \in Q_1^{-1}$.
\item $F = e_{s(E)}$ or $F = F'\theta$ with $\theta \in Q_1$.
\end{enumerate} 
We denote the set of all submodule factorizations of $C$ by $\mathcal{S}(C)$.
\end{defn}

A quotient factorization induces a module morphism from $C$ onto $E$ and dually, a submodule factorization induces a monomorphism from $E$ into $C$. 

\begin{defn}
For $C_1$ and $C_2$ strings in $Q$, a pair $((F_1,E_1,D_1),(F_2,E_2,D_2)) \in \mathcal{F}(C_1) \times \mathcal{S}(C_2)$ is called \textbf{admissible} if $E_1 = E_2$ or $E_1 = E_2^{-1}$.
\end{defn}

Each admissible pair $T = ((F_1,E_1,D_1),(F_2,E_2,D_2))$ provides a morphism $f_T:C_1 \rightarrow C_2$ achieved by projecting onto $E_1$ followed by identifying $E_1$ with $E_2$, then including $E_2$ into $C_2$. These morphisms will be called \textbf{graph maps}. Actually, more is true. These maps form a basis for the hom space as shown by Crawley-Boevey in $[\ref{ref: CB strings}]$.

\begin{thm}\label{thm: basis for hom}
If $A = \Bbbk Q /I$ is a string algebra and $C_1$ and $C_2$ are string modules, the set of graph maps is a basis for Hom$_A(C_1,C_2)$.
\end{thm}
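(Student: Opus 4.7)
The plan has three steps: verify that each admissible pair really produces a module homomorphism, show that these graph maps are linearly independent, and show that they span $\text{Hom}_A(C_1,C_2)$.

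First, I would check that an admissible pair $T = ((F_1,E_1,D_1),(F_2,E_2,D_2))$ really produces a morphism $f_T: C_1 \to C_2$. The quotient factorization conditions on $C_1 = F_1 E_1 D_1$, namely that $D_1$ is trivial or begins with a direct arrow and $F_1$ is trivial or ends with an inverse arrow, guarantee that the arrows of $C_1$ joining $E_1$ to $F_1$ or $D_1$ point away from $E_1$, so the projection $C_1 \twoheadrightarrow E_1$ respects the arrow maps. Dually, the submodule conditions on $C_2$ make $E_2 \hookrightarrow C_2$ a monomorphism. Composing, using the identification $E_1 = E_2^{\pm 1}$, yields $f_T$.

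For linear independence, I would index the basis of each string module by the vertices along its underlying walk. Each $f_T$ sends the basis vector of $C_1$ at position $s(E_1)$ to the basis vector of $C_2$ at position $s(E_2)$, and annihilates the basis vectors supported in $F_1$ and $D_1$. Order admissible pairs lexicographically by the pair (starting vertex of $E_2$ in $C_2$, length of $E_2$). Assuming $\sum c_T f_T = 0$, I isolate the minimal $T$ in this order and evaluate at the basis vector of $C_1$ at $s(E_1)$: only $c_T$ contributes the basis vector of $C_2$ at $s(E_2)$, so $c_T = 0$, and one iterates.

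For spanning, I would induct on $\dim_\Bbbk C_1$. Given a nonzero $f: C_1 \to C_2$, pick a basis vector $v$ of smallest position whose image is nonzero and expand $f(v) = \sum_j \lambda_j w_j$ in the vertex basis of $C_2$. Compatibility with the arrows of $Q$, combined with the gentle relations generating $I$, forces each $w_j$ to sit at the start of a substring $E_2^{(j)}$ of $C_2$ whose underlying walk matches a substring $E_1^{(j)}$ of $C_1$ starting at the position of $v$; moreover the arrows immediately surrounding these substrings must satisfy the quotient/submodule factorization conditions. Each $w_j$ thus yields an admissible pair $T_j$, and $f - \sum \lambda_j f_{T_j}$ vanishes on all basis vectors of $C_1$ up to and including $v$. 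Iterating through the basis of $C_1$ writes $f$ as a linear combination of graph maps.

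The main obstacle is the spanning step. The delicate part is turning the local constraint, how the arrows of $Q$ around $v$ restrict $f(v)$, into global admissibility of the factorizations $(F_i, E_i, D_i)$. Doing this rigorously requires case analysis at both ends of each putative common substring $E_1^{(j)} = (E_2^{(j)})^{\pm 1}$, using that in a string algebra each vertex has at most two incoming and two outgoing arrows and that for each arrow $\beta$ there is at most one compatible pre- and post-composition avoiding $I$. These are precisely the conditions that make the matching between substrings of $C_1$ and $C_2$ deterministic and compatible with the quotient/submodule factorization definitions, so the induction closes.
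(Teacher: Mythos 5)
This statement is not proved in the paper: it is cited as Crawley-Boevey's theorem (recited here in Schr\"oer's reformulation), so there is no in-paper proof for your argument to track. Your sketch is therefore a from-scratch attempt at the reference result, and it follows the standard direct combinatorial line rather than Crawley-Boevey's original organization; that is a legitimate choice, but it should be flagged that you are re-deriving a quoted theorem.

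Two concrete problems. First, in the linear-independence step you assert that $f_T$ sends the basis vector of $C_1$ at $s(E_1)$ to the basis vector of $C_2$ at $s(E_2)$, and build your ordering on this. That is only correct in the orientation-preserving case $E_1 = E_2$. When $E_1 = E_2^{-1}$ the identification reverses the walk, so the vector at $s(E_1)$ lands at $t(E_2)$, i.e.\ at the \emph{opposite} end of the middle segment inside $C_2$. Your lexicographic order on $\bigl(\text{start of }E_2\text{ in }C_2,\ \len(E_2)\bigr)$ therefore does not isolate the coefficient you want: a pair $T'$ with $s(E_2') > s(E_2)$ can still hit position $s(E_2)$ if the identification is reversed. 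The fix is routine (order by, say, the earlier of the two endpoints of $E_2$ in $C_2$ together with length, and evaluate at the endpoint of $E_1$ that maps to that earlier endpoint), but as written the argument does not close.

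Second, the spanning step is, as you say yourself, where the work is, and right now it is a statement of intent rather than a proof. The assertion that ``compatibility with the arrows $\dots$ forces each $w_j$ to sit at the start of a substring $E_2^{(j)}$ whose underlying walk matches a substring of $C_1$, and the surrounding arrows satisfy the quotient/submodule conditions'' is precisely the content of the theorem; it needs a genuine local-to-global propagation argument using conditions (1)--(2) in the definition of a string algebra at every step along the candidate common substring, and it needs to handle the same orientation-reversal dichotomy as the independence step. Until that propagation is carried out, the proof has not been given, only outlined. Since the paper itself offloads this to the literature, the honest options are either to complete the propagation argument in full, or to do as the paper does and cite Crawley-Boevey (and Schr\"oer's reformulation) rather than re-prove the result.
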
 

Using this result, the extension group between two strings in a gentle algebra was classified in [\ref{ref: BDMTY}]. To state this result, we need some more definitions.

\begin{defn}
A graph map $f_T$ given by an admissible pair $T = ((F_1,E_1,D_1),(F_2,E_2,D_2))$ is called \textbf{two-sided} if at least one of $D_1$ and $D_2$ has positive length, and the same holds for $F_1$ and $F_2$. We say that the string $C_1$ is \textbf{connectable} to the string $C_2$ if there exists an arrow $\alpha$ such that $C_1\alpha C_2$ is a string in $Q$, or dually that $C_2\alpha^{-1}C_1$ is a string in $Q$.
\end{defn}

Notice, if $C_1$ is connectable to $C_2$, we get a non-zero extension $$0 \rightarrow C_2 \rightarrow C_1\alpha C_2 \rightarrow C_1 \rightarrow 0.$$ Moreover, as shown in [\ref{ref: Schroer}], two-sided graph maps $f_T\in\text{Hom}_A(C_2,C_1)$ coming from an admissible pair $T = ((F_2,E_2,D_2),(F_1,E_1,D_1))$ give rise to a non-zero extension $$0\rightarrow C_2 \rightarrow F_2ED_1 \oplus F_1ED_2 \rightarrow C_1 \rightarrow 0.$$ Not only this, in [\ref{ref: BDMTY}] it was shown that these extensions form a basis.

\begin{thm}\label{thm: basis for ext}
Let $\Bbbk Q/I$ be a gentle algebra. For strings $C_1$ and $C_2$ in $Q$, the set of extensions coming both from connections $\alpha$ of $C_1$ to $C_2$ and two-sided graph maps $F_T \in \text{Hom}(C_2,C_1)$ form a basis for Ext$^1_{A}(C_1,C_2)$.
\end{thm}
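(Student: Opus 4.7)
The plan is to produce the basis in two stages: first exhibit explicit candidate basis elements that are manifestly nonzero and linearly independent, and then match dimensions using the Auslander--Reiten formula to conclude they span. Recall that for a finite dimensional algebra $A$ we have
\[
\dim_{\Bbbk}\operatorname{Ext}^1_A(C_1,C_2) \;=\; \dim_{\Bbbk}\overline{\operatorname{Hom}}_A(C_2,\tau C_1),
\]
where the bar denotes quotienting by morphisms factoring through a projective. The right-hand side is amenable to Theorem \ref{thm: basis for hom} and Theorem \ref{thm: tau of string algebra}, so the strategy is to set up a bijection between a basis of $\overline{\operatorname{Hom}}(C_2,\tau C_1)$ and the set of candidate extensions built from connections $C_1\alpha C_2$ and two-sided graph maps $f_T \in \operatorname{Hom}(C_2,C_1)$.

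For the first stage, each connection arrow $\alpha$ making $C_1\alpha C_2$ a string yields the short exact sequence
\[
0 \to C_2 \to C_1\alpha C_2 \to C_1 \to 0,
\]
and the middle term, being a single string module with the listed composition factors, cannot split as $C_1 \oplus C_2$; hence this class is nonzero. For each two-sided admissible pair $T = ((F_2,E_2,D_2),(F_1,E_1,D_1))$ representing a graph map $f_T \in \operatorname{Hom}(C_2,C_1)$, follow Schr\"oer's construction to build
\[
0 \to C_2 \to F_2 E D_1 \oplus F_1 E D_2 \to C_1 \to 0;
\]
the two-sided hypothesis, i.e.\ that both sides of the admissible pair contribute nontrivially, is exactly what forces the sequence not to split (a one-sided graph map produces a summand equal to $C_1$ or $C_2$). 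Linear independence among these extensions follows by comparing string decompositions of the middle term: distinct connection arrows produce distinct indecomposable middle terms, while distinct two-sided graph maps produce middle terms whose decomposable pieces record the factorization data and so are distinguishable.

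For the second, spanning stage, apply Theorem \ref{thm: tau of string algebra} to write $\tau C_1$ explicitly as $C_1$ modified by adding cohooks where possible and deleting hooks where not. Then Theorem \ref{thm: basis for hom} gives a graph-map basis of $\operatorname{Hom}(C_2, \tau C_1)$, which I would partition into three disjoint families: graph maps whose image lies entirely inside one of the newly added cohooks of $\tau C_1$; graph maps that factor through the projective-injective strings used to delete hooks; and graph maps that genuinely interact with the core of $C_1$. The first family corresponds bijectively to connection arrows $\alpha$ of $C_1$ to $C_2$ (the added cohook at an end of $C_1$ encodes precisely the strings of the form $C_1\alpha C_2$ after $\tau$), and the third family corresponds bijectively to two-sided graph maps $C_2 \to C_1$, while the middle family is killed by passing to $\overline{\operatorname{Hom}}$.

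The hard part is the bijection in the third paragraph, specifically verifying that the middle family of graph maps is exactly the subspace of morphisms factoring through projectives, and that one-sided graph maps $C_2 \to C_1$ lift to graph maps $C_2 \to \tau C_1$ that land in the hook-deletion region and hence vanish in $\overline{\operatorname{Hom}}$. This requires a careful case analysis on which ends of $C_1$ admit cohook addition versus hook deletion (four cases), tracked through the combinatorics of the admissible triple. Once the bijection on bases is established, the dimension count forces the candidate extensions to span, completing the proof.
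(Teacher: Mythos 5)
The paper does not prove this statement; it is recalled from [BDMTY] and used as a black box, so there is no in-paper proof to compare against. Evaluating your outline on its own terms, the overall strategy --- compute $\dim\operatorname{Ext}^1(C_1,C_2)$ via Auslander--Reiten duality, describe $\tau C_1$ with Theorem~\ref{thm: tau of string algebra}, and then enumerate the graph-map basis of $\operatorname{Hom}(C_2,\tau C_1)$ --- is a plausible route, and it is a different one from the actual proof in [BDMTY], which works directly with a projective presentation of $C_1$ and analyzes the cokernel of $\operatorname{Hom}(P_1,C_2)\to\operatorname{Hom}(\Omega C_1,C_2)$ rather than passing to $\tau$. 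But as written your argument has gaps that are not cosmetic.

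First, the Auslander--Reiten formula is stated with the wrong quotient. For an arbitrary Artin algebra, $\operatorname{Ext}^1_A(C_1,C_2)\cong D\,\overline{\operatorname{Hom}}_A(C_2,\tau C_1)$ where $\overline{\operatorname{Hom}}$ kills morphisms factoring through \emph{injectives}; the mod-projectives version is $D\,\underline{\operatorname{Hom}}_A(\tau^{-1}C_2,C_1)$. Since gentle algebras are not hereditary in general, you cannot drop the quotient, and quotienting by the wrong ideal would flip which graph maps get killed, so the case analysis you defer would be built on the wrong foundation. Your later phrase about ``factoring through the projective-injective strings used to delete hooks'' compounds this: morphisms that need to vanish in $\overline{\operatorname{Hom}}$ are those factoring through any injective, not only through projective-injectives, and it is not automatic that these are exactly the graph maps supported on the hook-deleted region.

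Second, the linear independence argument in the first stage does not work. Observing that two extensions have non-isomorphic middle terms only shows the extension classes are distinct; it does not show that a family of such classes is linearly independent in the vector space $\operatorname{Ext}^1(C_1,C_2)$, where distinct basis vectors can certainly have isomorphic middle terms and conversely. The standard way to get linear independence is precisely to show that your candidate extensions map to linearly independent graph-map classes in $\overline{\operatorname{Hom}}(C_2,\tau C_1)$ under the AR isomorphism --- in other words, to carry out the bijection you label ``the hard part.'' As it stands, both the independence and the spanning depend on that unproved bijection, so the proposal is a plan rather than a proof, and the plan needs the injective/projective correction before the case analysis can be expected to close.
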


Since path algebras of quivers of type $\tilde{\mathbb{A}}$ are gentle, Theorems \ref{thm: tau of string algebra}, \ref{thm: basis for hom}, and \ref{thm: basis for ext} will be used throughout the paper.

\subsection{Universal Covers of Quivers}

\indent

We begin by recalling some definitions from category theory. We define a \textbf{$\Bbbk$-Category} $\mathcal{C}$ as a category in which the morphism sets are $\Bbbk$-vector spaces and the composition maps are $\Bbbk$-bilinear. A $\Bbbk$-linear functor $F:\mathcal{C} \rightarrow \mathcal{D}$ between two $\Bbbk$-categories is called a \textbf{covering functor} if the following induced maps are bijections for all $M\in\text{Ob}(\mathcal{C})$ and $B\in\text{Ob}(\mathcal{D})$: 

\begin{center}
\begin{tabular}{c c}

 $\underset{\{N : F(N) = B\}}{\coprod} \text{Mor}_{\mathcal{C}}(M,N) \rightarrow \text{Mor}_{\mathcal{D}}(F(M),B)$

&

$\underset{\{N : F(N) = B\}}{\coprod} \text{Mor}_{\mathcal{C}}(N,M) \rightarrow \text{Mor}_{\mathcal{D}}(B,F(M))$

\end{tabular}
\end{center}

\noindent
For more on universal covers and covering functors see [\ref{ref: Gabriel Universal Cover}]. It is well known that the universal cover of quivers of type $\mathbb{\tilde{A}}_n$ are infinite quivers of type $\mathbb{A}$ that preserve the orientation. That is, if we are given an orientation $\bm{\varepsilon} = (\varepsilon_0,\dots,\varepsilon_n)$ of a quiver of type $\mathbb{\tilde{A}}_n$, then the universal cover $\tilde{Q}$ will be a quiver of type $\mathbb{A}_{\infty}$ with orientation $\tilde{\bm
{\varepsilon}} = (\tilde{\varepsilon_i})_{i \in \mathbb{Z}}$ where $\tilde{\varepsilon_i} = \varepsilon_{i\,(\text{mod} \, n+1)}$. 

\begin{figure}[h!]
    \centering

\tikzset{every picture/.style={line width=0.75pt}} 

\begin{tikzpicture}[x=0.75pt,y=0.75pt,yscale=-1,xscale=1]

\draw    (256.22,117.96) -- (287.22,117.96) ;
\draw [shift={(289.22,117.96)}, rotate = 180] [color={rgb, 255:red, 0; green, 0; blue, 0 }  ][line width=0.75]    (10.93,-3.29) .. controls (6.95,-1.4) and (3.31,-0.3) .. (0,0) .. controls (3.31,0.3) and (6.95,1.4) .. (10.93,3.29)   ;
\draw    (357.22,117.96) -- (389.22,117.96) ;
\draw [shift={(391.22,117.96)}, rotate = 180] [color={rgb, 255:red, 0; green, 0; blue, 0 }  ][line width=0.75]    (10.93,-3.29) .. controls (6.95,-1.4) and (3.31,-0.3) .. (0,0) .. controls (3.31,0.3) and (6.95,1.4) .. (10.93,3.29)   ;
\draw    (417.22,117.96) -- (448.22,117.96) ;
\draw [shift={(450.22,117.96)}, rotate = 180] [color={rgb, 255:red, 0; green, 0; blue, 0 }  ][line width=0.75]    (10.93,-3.29) .. controls (6.95,-1.4) and (3.31,-0.3) .. (0,0) .. controls (3.31,0.3) and (6.95,1.4) .. (10.93,3.29)   ;
\draw    (527.22,117.96) -- (556.22,117.96) ;
\draw [shift={(558.22,117.96)}, rotate = 180] [color={rgb, 255:red, 0; green, 0; blue, 0 }  ][line width=0.75]    (10.93,-3.29) .. controls (6.95,-1.4) and (3.31,-0.3) .. (0,0) .. controls (3.31,0.3) and (6.95,1.4) .. (10.93,3.29)   ;
\draw    (153.22,117.96) -- (186.22,117.96) ;
\draw [shift={(188.22,117.96)}, rotate = 180] [color={rgb, 255:red, 0; green, 0; blue, 0 }  ][line width=0.75]    (10.93,-3.29) .. controls (6.95,-1.4) and (3.31,-0.3) .. (0,0) .. controls (3.31,0.3) and (6.95,1.4) .. (10.93,3.29)   ;
\draw    (95.22,117.96) -- (126.22,117.96) ;
\draw [shift={(128.22,117.96)}, rotate = 180] [color={rgb, 255:red, 0; green, 0; blue, 0 }  ][line width=0.75]    (10.93,-3.29) .. controls (6.95,-1.4) and (3.31,-0.3) .. (0,0) .. controls (3.31,0.3) and (6.95,1.4) .. (10.93,3.29)   ;
\draw    (237.22,117.96) -- (211.22,117.96) ;
\draw [shift={(209.22,117.96)}, rotate = 360] [color={rgb, 255:red, 0; green, 0; blue, 0 }  ][line width=0.75]    (10.93,-3.29) .. controls (6.95,-1.4) and (3.31,-0.3) .. (0,0) .. controls (3.31,0.3) and (6.95,1.4) .. (10.93,3.29)   ;
\draw    (337.22,117.96) -- (311.22,117.96) ;
\draw [shift={(309.22,117.96)}, rotate = 360] [color={rgb, 255:red, 0; green, 0; blue, 0 }  ][line width=0.75]    (10.93,-3.29) .. controls (6.95,-1.4) and (3.31,-0.3) .. (0,0) .. controls (3.31,0.3) and (6.95,1.4) .. (10.93,3.29)   ;
\draw    (506.22,117.96) -- (480.22,117.96) ;
\draw [shift={(478.22,117.96)}, rotate = 360] [color={rgb, 255:red, 0; green, 0; blue, 0 }  ][line width=0.75]    (10.93,-3.29) .. controls (6.95,-1.4) and (3.31,-0.3) .. (0,0) .. controls (3.31,0.3) and (6.95,1.4) .. (10.93,3.29)   ;
\draw    (606.22,117.96) -- (580.22,117.96) ;
\draw [shift={(578.22,117.96)}, rotate = 360] [color={rgb, 255:red, 0; green, 0; blue, 0 }  ][line width=0.75]    (10.93,-3.29) .. controls (6.95,-1.4) and (3.31,-0.3) .. (0,0) .. controls (3.31,0.3) and (6.95,1.4) .. (10.93,3.29)   ;
\draw    (69.22,117.96) -- (43.22,117.96) ;
\draw [shift={(41.22,117.96)}, rotate = 360] [color={rgb, 255:red, 0; green, 0; blue, 0 }  ][line width=0.75]    (10.93,-3.29) .. controls (6.95,-1.4) and (3.31,-0.3) .. (0,0) .. controls (3.31,0.3) and (6.95,1.4) .. (10.93,3.29)   ;

\draw (241,110.4) node [anchor=north west][inner sep=0.75pt]    {$1$};
\draw (293,110.4) node [anchor=north west][inner sep=0.75pt]    {$2$};
\draw (341,110.4) node [anchor=north west][inner sep=0.75pt]    {$3$};
\draw (401,110.4) node [anchor=north west][inner sep=0.75pt]    {$4$};
\draw (460,110.4) node [anchor=north west][inner sep=0.75pt]    {$5$};
\draw (514,110.4) node [anchor=north west][inner sep=0.75pt]    {$6$};
\draw (565,110.4) node [anchor=north west][inner sep=0.75pt]    {$7$};
\draw (192,110.4) node [anchor=north west][inner sep=0.75pt]    {$0$};
\draw (130,110.4) node [anchor=north west][inner sep=0.75pt]    {$-1$};
\draw (70,110.4) node [anchor=north west][inner sep=0.75pt]    {$-2$};
\draw (6,120) node [anchor=north west][inner sep=0.75pt]    {$\dotsc $};
\draw (612,120) node [anchor=north west][inner sep=0.75pt]    {$\dotsc $};

\end{tikzpicture}

\caption{The universal cover of the quiver in Figure \ref{fig: underlying graph and example of orientation}}
    \label{fig: example of universal cover}
\end{figure}

Until stated otherwise, fix $Q^{\bm{\varepsilon}}$ to be a quiver of type $\mathbb{\tilde{A}}_{n-1}$ with orientation $\bm{\varepsilon}$, so that $|Q_0| = n$. Consider the universal cover $\tilde{Q}^{\tilde{\bm{\varepsilon}}}$, the covering functor $F:\Bbbk \tilde{Q}^{\tilde{\bm{\varepsilon}}} \rightarrow  \Bbbk Q^{\bm{\varepsilon}}$, and a $\Bbbk Q^{\bm{\varepsilon}}$-string module $M = ij_k$. Note that $F^{-1}(M) = \{\Bbbk \tilde{Q}^{\tilde{\bm{\varepsilon}}}-\text{string modules} \, i'j'_k : i' \equiv i (\text{mod}\, n), \, j'\equiv j (\text{mod} \, n), \, i'<j', \, \text{and} \, j' - i' = k\}$ is a countable set of $\Bbbk \tilde{Q}^{\tilde{\bm{\varepsilon}}}$-string modules. We must first establish a convention for translating between strings in $Q^{\bm{\varepsilon}}$ and strings in $\tilde{Q}^{\tilde{\bm{\varepsilon}}}$. We define the \textbf{fundamental domain} of the universal cover $\tilde{Q}^{\tilde{\bm{\varepsilon}}}$ to be the subset of vertices $FD = \{0,1,2,\dots, n\}$. 

\begin{defn} \label{defn: fundamental lift}
Consider the string module $ij_k$. We define the \textbf{fundamental lift} of $ij_k$, denoted by $\textbf{fl}(ij_k)$ as follows. 
\begin{itemize}
\item If $ij_k$ is preprojective or left regular, then $\textbf{fl}(ij_k)$ is the string that begins at $\tilde{i} = i \in FD$ and ends at $\tilde{j} = i + k$.
\item If $ij_k$ is preinjective or right regular, then $\textbf{fl}(ij_k)$ is the string that begins at $\tilde{i} = j - k$ and ends at $\tilde{j} = j \in FD$.
\end{itemize}
\end{defn} 

\noindent

\begin{rem}\label{rem: prepro and preinj strands defn}
Notice that with this convention, $ij_k$ is a preprojective string if and only if $\tilde{\varepsilon}_{\tilde{i}} = +$ and $\tilde{\varepsilon}_{\tilde{j}} = -$. Similarly, $ij_k$ is a preinjective string if and only if $\tilde{\varepsilon}_{\tilde{i}} = -$ and $\tilde{\varepsilon}_{\tilde{j}} = +$, $ij_k$ is a right regular string if and only if $\tilde{\varepsilon}_{\tilde{i}} = -=\tilde{\varepsilon}_{\tilde{j}}$ and $ij_k$ is a left regular string if and only if $\tilde{\varepsilon}_{\tilde{i}} = + = \tilde{\varepsilon}_{\tilde{j}}$. \\
\end{rem}

\begin{rem} \label{rem: counting dimensions of hom and ext in univ cover}
Let $M,N$ be two $\Bbbk Q^{\bm{\varepsilon}}$-string modules and fix $X \in F^{-1}(M)$. It follows from the definition of a covering functor that $\text{dim}(\text{Hom}_{\Bbbk Q}(M,N)) = \displaystyle \sum_{\{i:Y_i \in F^{-1}(N)\}} \text{dim}(\text{Hom}_{\Bbbk\tilde{Q}}(X,Y_i))$. \\

Moreover, although $\Bbbk \tilde{Q}^{\tilde{\bm{\varepsilon}}}$ is not hereditary, we have that for any $Y_i \in F^{-1}(N)$, $X$ and $Y_i$ are indecomposable string modules over some finite subquiver of $\tilde{Q}^{\tilde{\bm{\varepsilon}}}$ that is of type $\mathbb{A}_m$ for some $m\in \mathbb{N}$. Whence we can use the Auslander-Reiten formulas to conclude that 
\vspace{-.1 cm}
\begin{align*}
	\text{dim}(\text{Ext}_{\Bbbk Q}(M,N)) &= \text{dim}(\text{Hom}_{\Bbbk Q}(N,\tau M))  \\
											  &= \displaystyle \sum_{\{i:Y_i \in F^{-1}(N)\}} \text{dim}(\text{Hom}_{\Bbbk \tilde{Q}}(Y_i,X')) \\
											  &= \displaystyle \sum_{\{i:Y_i \in F^{-1}(N)\}} \text{dim}(\text{Ext}_{\Bbbk \tilde{Q}}(X,Y_i)).
\end{align*}

where $\tau X = X' \in F^{-1}(\tau M)$.

\end{rem}

\noindent
These remarks, along with the convention in Definition \ref{defn: fundamental lift} will become important in computing dimensions of extension groups and Hom spaces. 

\section{Combinatorial Realizations of Exceptional Collections}

In this section, we will provide three combinatorial realizations of exceptional collections in type $\tilde{\mathbb{A}}$; namely, strand diagrams, chord diagrams, and arc diagrams.

\subsection{Strand Diagrams}

\indent

In this subsection we will define a combinatorial object that we will call strand diagrams associated to string modules. We will then use these to classify exceptional collections over $Q^{\bm{\varepsilon}}$. Following [\ref{ref: combinatorics exceptional sequences type A}] closely, let $S_{n,\tilde{\varepsilon}} := \{\dots, (x_{-1},0), (x_0,0), (x_1,0), \dots\}\subset\mathbb{R}^2$ be a collection of points arranged on the $x$-axis from left to right together with the function $\tilde{\varepsilon} : S_{n,\tilde{\varepsilon}} \rightarrow \{+,-\}$ sending $(x_i,0) \mapsto \tilde{\varepsilon}_i\in\tilde{\bm\varepsilon}$. The reason for decorating this set with a subscript $n$ is because we will be taking these vertices modulo $n$ once we associate these diagrams to an $\tilde{\mathbb{A}}_{n-1}$ quiver. Moreover, we will typically label vertex $(x_i,0)$ as $i$.

\begin{defn}
Let $i,j\in\mathbb{Z}$ be such that $i\neq j$. A \textbf{strand} $c(i,j) = c(j,i)$ on $S_{n,\tilde{\varepsilon}}$ is an isotopy class of simple curves in $\mathbb{R}^2$ where any $\gamma \in c(i,j)$ satisfies: 
\begin{enumerate}
\item the endpoints of $\gamma$ are $(x_i,0)$ and $(x_j,0)$, 
\item as a subset of $\mathbb{R}^2$, $\gamma \subset \{(x,y)\in \mathbb{R}^2 : x_{\text{min}(i,j)} \leq x \leq x_{\text{max}(i,j)}\} \backslash \{(x_k,0) : x_{\text{min}(i,j)} < x_k < x_{\text{max}(i,j)}\}$, 
\item if $\text{min}(i,j) \leq k \leq  \text{max}(i,j)$ and $\tilde{\varepsilon}_k = +$ (resp., $\tilde{\varepsilon}_k = -$), then $\gamma$ is locally below (resp., locally above) $(x_k,0)$.
\end{enumerate}
\end{defn}  

By locally below (locally above)$(x_k,0)$ we mean that for a parameterization of $\gamma = (\gamma^{(1)}, \gamma^{(2)}):[0,1]\rightarrow \mathbb{R}^2$, there exists a $\delta \in \mathbb{R}$ with $0 < \delta < {1\over 2} \text{min}\{|x_k-x_{k-1}|,|x_k-x_{k+1}|\}$ such that $\gamma$ satisfies $\gamma^{(2)}(t) < 0$ if $\tilde{\varepsilon}_k = +$ (resp., $\gamma^{(2)}(t) >0 $ if $\tilde{\varepsilon}_k = -$) for all $t \in (0,1)$ where $\gamma^{(1)}(t) \in (x_k - \delta, x_k + \delta)$. Note that there is an injection $\Phi_{\tilde{\varepsilon}}$ from the string modules in ind(rep$(Q^{\tilde{\varepsilon}})$ to the set of strands on $S_{n,\tilde{\varepsilon}}$ given by  $\Phi_{\tilde{\varepsilon}}(ij_k) := c(\tilde{i},\tilde{j})$ where $\tilde{i} < \tilde{j}$. We call a strand $c(i,j) \in S_{n,\tilde{\varepsilon}}$ a \textbf{fundamental strand} if $c(i,j) \in \text{im}(\Phi_{\tilde{\varepsilon}})$. Note that any strand $c(i,j)$ can be represented by a \textbf{monotone curve} $\gamma \in c(i,j)$. That is, there exists a curve $\gamma \in c(i,j)$ with a parameterization $\gamma = (\gamma^{(1)},\gamma^{(2)}):[0,1] \rightarrow \mathbb{R}^2$ such that if $t,s\in [0,1]$ and $t<s$, then $\gamma^{(1)}(t) < \gamma^{(1)}(s)$.  

For the following definitions we fix some $n \in \mathbb{N}$, again keeping in mind that we will associate these strands to a quiver $Q^{\bm{\varepsilon}}$ where $|Q_0| = n$. We say that two strands $c(i_1,j_1)$ and $c(i_2,j_2)$ \textbf{intersect nontrivially} if there exists $z\in\mathbb{Z}$ such that any two curves $\gamma_1 \in c(i_{1} + nz,j_{1} + nz)$ and $\gamma_2 \in c(i_2,j_2)$ intersect in their interiors. Otherwise we say that $c(i_1,j_1)$ and $c(i_2,j_2)$ \textbf{do not intersect nontrivially}. We say a strand $c(i,j)$ \textbf{self-intersects} if there exists $z\in\mathbb{Z}$ such that any two curves $\gamma_1 \in c(i + nz,j + nz)$ and $\gamma_2 \in c(i,j)$ intersect in their interiors. Otherwise we say that $c(i,j)$ \textbf{does not self-intersect}. If $c(i_1,j_1)$ and $c(i_2,j_2)$ do not intersect nontrivially, we say $c(i_2,j_2)$ is \textbf{clockwise} from $c(i_1,j_1)$ (or equivalently $c(i_1,j_1)$ is \textbf{counterclockwise} from $c(i_2,j_2)$) if and only if for any $z \in \mathbb{Z}$ such that there exists $\gamma_1 \in c(i_{1} + zn,j_{1} + zn)$ and $\gamma_2 \in c(i_2,j_2)$ that share an endpoint $(x_k,0)$ and do not intersect in their interiors, we have that $\gamma_1$ and $\gamma_2$ locally appear in one of the six configurations Figure \ref{fig: six local clockwise configs}, preserving the property that $\gamma_1 \in c(i_{1} + zn,j_{1} + zn)$ and $\gamma_2 \in c(i_2,j_2)$. We say that $c(i_2,j_2)$ is \textbf{locally clockwise} from $c(i_1,j_1)$ if there exists $z \in \mathbb{Z}$ such that some $\gamma_1 \in c(i_{1} + zn,j_{1} + zn)$ and $\gamma_2 \in c(i_2,j_2)$ share an endpoint $(x_k,0)$, do not intersect in their interiors, and $\gamma_1$ and $\gamma_2$ locally appear in one of the six configurations in Figure \ref{fig: six local clockwise configs}, preserving the property that $\gamma_1 \in c(i_{1} + zn,j_{1} + zn)$ and $\gamma_2 \in c(i_2,j_2)$. We say that a collection of strands $\{c(i_1,j_1), c(i_2,j_2), \dots , c(i_k,j_k)\}$ form a \textbf{cycle} if and only if $c(i_l,j_l)$ is locally clockwise from $c(i_{l+1},j_{l+1})$ for all $l < k$ and $c(i_k,j_k)$ is locally clockwise from $c(i_1,j_1)$. We say a strand $c(i,j)$ is a \textbf{loop} if it does not self-intersect and $c(i,j)$ is both locally clockwise and locally counterclockwise from itself.

\begin{figure}[h!]
    \centering
\tikzset{every picture/.style={line width=0.75pt}} 

\begin{tikzpicture}[x=0.75pt,y=0.75pt,yscale=-1,xscale=1]

\draw    (72,78) -- (94.22,100.01) ;
\draw    (71,79) -- (52.22,100.01) ;
\draw    (168,79) -- (188.22,100.01) ;
\draw    (168,79) -- (179.22,107.01) ;
\draw    (288,76) -- (279.22,103.01) ;
\draw    (288,76) -- (264.22,96.01) ;
\draw    (379,96) -- (398.22,73.01) ;
\draw    (379,96) -- (357.22,72.01) ;
\draw    (499,100) -- (488.22,67.01) ;
\draw    (499,100) -- (469.22,70.01) ;
\draw    (597,98) -- (626.22,67.01) ;
\draw    (597,98) -- (603.22,72.01) ;

\draw (66,70) node [anchor=north west][inner sep=0.75pt]  [font=\Huge]  {$\cdot $};
\draw (37,72.4) node [anchor=north west][inner sep=0.75pt]    {$\gamma _{2}$};
\draw (91,73.4) node [anchor=north west][inner sep=0.75pt]    {$\gamma _{1}$};
\draw (47,108.4) node [anchor=north west][inner sep=0.75pt]    {$\tilde{\varepsilon} _{k} =+$};
\draw (62,133.4) node [anchor=north west][inner sep=0.75pt]    {$( a)$};
\draw (162,71) node [anchor=north west][inner sep=0.75pt]  [font=\Huge]  {$\cdot $};
\draw (153,85.4) node [anchor=north west][inner sep=0.75pt]    {$\gamma _{2}$};
\draw (185,73.4) node [anchor=north west][inner sep=0.75pt]    {$\gamma _{1}$};
\draw (151,109.4) node [anchor=north west][inner sep=0.75pt]    {$\tilde{\varepsilon} _{k} =+$};
\draw (165,133.4) node [anchor=north west][inner sep=0.75pt]    {$( b)$};
\draw (282,68) node [anchor=north west][inner sep=0.75pt]  [font=\Huge]  {$\cdot $};
\draw (257,65.4) node [anchor=north west][inner sep=0.75pt]    {$\gamma _{2}$};
\draw (290,85.4) node [anchor=north west][inner sep=0.75pt]    {$\gamma _{1}$};
\draw (250,109.4) node [anchor=north west][inner sep=0.75pt]    {$\tilde{\varepsilon} _{k} =+$};
\draw (266,133.4) node [anchor=north west][inner sep=0.75pt]    {$( c)$};
\draw (373,88) node [anchor=north west][inner sep=0.75pt]  [font=\Huge]  {$\cdot $};
\draw (344,79.4) node [anchor=north west][inner sep=0.75pt]    {$\gamma _{1}$};
\draw (394,82.4) node [anchor=north west][inner sep=0.75pt]    {$\gamma _{2}$};
\draw (354,108.4) node [anchor=north west][inner sep=0.75pt]    {$\tilde{\varepsilon} _{k} =-$};
\draw (369,133.4) node [anchor=north west][inner sep=0.75pt]    {$( d)$};
\draw (493,92) node [anchor=north west][inner sep=0.75pt]  [font=\Huge]  {$\cdot $};
\draw (464,83.4) node [anchor=north west][inner sep=0.75pt]    {$\gamma _{1}$};
\draw (499,63.4) node [anchor=north west][inner sep=0.75pt]    {$\gamma _{2}$};
\draw (474,112.4) node [anchor=north west][inner sep=0.75pt]    {$\tilde{\varepsilon} _{k} =-$};
\draw (489,137.4) node [anchor=north west][inner sep=0.75pt]    {$( e)$};
\draw (591,90) node [anchor=north west][inner sep=0.75pt]  [font=\Huge]  {$\cdot $};
\draw (579,67.4) node [anchor=north west][inner sep=0.75pt]    {$\gamma _{1}$};
\draw (612,84.4) node [anchor=north west][inner sep=0.75pt]    {$\gamma _{2}$};
\draw (572,110.4) node [anchor=north west][inner sep=0.75pt]    {$\tilde{\varepsilon} _{k} =-$};
\draw (587,135.4) node [anchor=north west][inner sep=0.75pt]    {$( d)$}; 
\end{tikzpicture}

    \caption{The six possible local configurations of strand $c(i_2,j_2)$ being clockwise from strand $c(i_1,j_1)$ near the shared endpoint $(x_k,0)$.}
    \label{fig: six local clockwise configs}
\end{figure}

\noindent

A given collection of strands $d = \{c(i_l,j_l)\}_{[k]}$ with $k \leq n$, naturally defines a graph with vertex set $S_{n,\tilde{\varepsilon}}$ and edge set $\{((x_s,0),(x_t,0)) : c(s,t) \in d\}$. We refer to this graph as the \textbf{graph determined by d}. The following technical yet important lemma and its proof appear in [\ref{ref: combinatorics exceptional sequences type A}. Lemma 11]. 

\begin{lem} {\color{white} .}
\begin{enumerate}
\item Each strand $c(i,j)$ can be represented by a monotone curve $\gamma_{i,j}$.
\item Two distinct strands $c(i_1,j_1)$ and $c(i_2,j_2)$ on $S_{n,\tilde{\varepsilon}}$ that intersect nontrivially do not share an endpoint.
\item Two distinct strands $c(i_1,j_1)$ and $c(i_2,j_2)$ on $S_{n,\tilde{\varepsilon}}$ intersect nontrivially if and only if there exist representatives of the  monotone curves $\gamma_{i_1,j_2}$ and $\gamma_{i_2,j_2}$ that have a unique crossing. $\hfill \square$
\end{enumerate}
\end{lem}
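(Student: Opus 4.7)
I would prove the three parts sequentially.

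For (1), the plan is a direct construction. Given $c(i,j)$ with $i<j$, pick a small $\delta>0$ and for each intermediate vertex $i<k<j$ set a waypoint $w_k = (x_k, -\delta)$ if $\tilde{\varepsilon}_k = +$ and $w_k = (x_k, +\delta)$ if $\tilde{\varepsilon}_k = -$. Take $\gamma$ to be the piecewise-linear curve from $(x_i,0)$ through the $w_k$ (in order of increasing $k$) to $(x_j,0)$. Then $\gamma^{(1)}$ is strictly increasing and, by choice of $w_k$, $\gamma$ sits on the prescribed side of each intermediate vertex, so $\gamma\in c(i,j)$.

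For (2), let $c(i_1,j_1)\neq c(i_2,j_2)$ share an endpoint $(x_k,0)$ on the real line. I would show that for every $z\in\mathbb{Z}$ one can choose representatives of $c(i_1+nz, j_1+nz)$ and $c(i_2,j_2)$ with disjoint interiors, contradicting the definition of nontrivial intersection. For $z=0$: take monotone representatives from (1) and note that both curves exit $(x_k,0)$ with tangent directions in the same closed half-plane, namely the one prescribed by $\tilde{\varepsilon}$ at the vertex adjacent to $(x_k,0)$ along each curve. A small vertical perturbation near $(x_k,0)$ places one curve strictly above the other in a neighborhood of the shared endpoint; since the strands are distinct, the remaining segments can be routed at slightly different heights to stay disjoint. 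For $z\neq 0$: if the translated strip $[x_{\min(i_1,j_1)+nz}, x_{\max(i_1,j_1)+nz}]$ is disjoint from $[x_{\min(i_2,j_2)}, x_{\max(i_2,j_2)}]$ the conclusion is immediate; otherwise the four endpoints are now distinct, and the same height-perturbation argument applies on the overlap.

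For (3), the reverse direction is immediate from the definition (take $z=0$). For the forward direction, take monotone representatives $\gamma_1, \gamma_2$ from (1) and apply standard bigon reduction from surface topology: iteratively isotope away innermost bigons bounded by arcs of $\gamma_1$ and $\gamma_2$ to reach representatives with minimum transverse intersection number. By part (2) the four endpoints are distinct, and on the overlap of their $x$-intervals each monotone curve is the graph of a continuous function $f_i$. Bigon-freeness forces transverse crossings with alternating sign, so the crossing count has parity determined by comparing the signs of $f_1 - f_2$ at the two boundary points of the overlap. These boundary values are determined in turn by the signs of $\tilde{\varepsilon}$ at the two vertices that are interior to one curve and an endpoint of the other, and the nontriviality assumption is exactly what forces odd parity; minimality then collapses this to a single crossing.

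The main obstacle I expect is verifying that bigon reduction remains inside the strand isotopy classes, i.e.\ that no bigon formed by $\gamma_1$ and $\gamma_2$ encloses any vertex $(x_k,0)$. If such a bigon existed, the isotopy collapsing it would drag one of the curves across $(x_k,0)$ and violate its side constraint. This is ruled out by observing that near each intermediate vertex both curves are restricted to the single half-plane determined by $\tilde{\varepsilon}_k$, so any bigon between them in a neighborhood of that vertex lies on one side of the $x$-axis and cannot enclose $(x_k,0)$. Handling this bookkeeping across all intermediate vertices simultaneously is the delicate point.
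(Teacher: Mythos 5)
First, a contextual point: the paper does not prove this lemma itself; it cites it from Garver--Igusa--Matherne--Ostroff, Lemma 11, which is stated in finite type $\mathbb{A}$ where there are no $\mathbb{Z}$-shifts. So there is no internal proof to compare against, and the statement as reproduced here is re-using wording whose surrounding definitions have now been re-defined to involve shifts by $nz$. Your reconstruction of the cited argument should keep this in mind.

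Part (1) is fine.

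Part (2) has a genuine gap at the $z\neq 0$ step. Your assertion that ``the four endpoints are now distinct, and the same height-perturbation argument applies on the overlap'' does not hold. The perturbation argument you give for $z=0$ is anchored to the shared endpoint: both curves emanate from the same point into the same half-plane, and you can slide one above the other from there. When the four endpoints are distinct and interleaved, the relative order of the two monotone graphs at the two boundary points of the $x$-overlap is \emph{pinned} by the $\tilde\varepsilon$-signs at those vertices, and if these pinned signs disagree a crossing is forced; there is nothing left to perturb. In fact the statement you are trying to prove (``share an endpoint at $z=0$ implies they do not intersect nontrivially'') fails at this generality. Take $n=4$ and $\tilde\varepsilon$ alternating with $\tilde\varepsilon_i=+$ for $i$ even. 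Then $c(0,3)$ and $c(3,6)$ share the endpoint $(x_3,0)$, but the $z=1$ shift $c(4,7)$ of $c(0,3)$ and $c(3,6)$ are interleaved $3<4<6<7$, and since $\tilde\varepsilon_4=\tilde\varepsilon_6=+$, the graph of $c(4,7)$ is forced above that of $c(3,6)$ at $x_4$ and below it at $x_6$, so every pair of representatives crosses in their interiors. So $c(0,3)$ and $c(3,6)$ do intersect nontrivially. The version of (2) that is actually used downstream, and that the cited finite-type lemma supports, is the statement at the witnessing shift: if every pair of representatives at a \emph{fixed} $z$ must cross, then those two curves have four distinct endpoints. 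That is exactly your $z=0$ argument, and trying to extend it to all $z$ is where the proof goes wrong. (Also, a small factual slip: after shifting by $nz$ the four endpoints are not automatically distinct; e.g.\ $i_1+nz$ can coincide with $j_2$.)

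On part (3): calling the reverse direction ``immediate from the definition'' glosses over what the definition actually requires, namely that \emph{any} pair of representatives of the two isotopy classes cross in their interiors, not just the particular pair you exhibited. What makes it work is that transverse intersection number mod $2$ is an invariant of the pair of constrained isotopy classes (the isotopies may be taken to avoid the points $(x_k,0)$), so one transverse crossing forces odd, hence nonzero, intersection number for every pair. That is a short remark but it is not literally the definition. Your forward-direction sketch (bigon reduction, sign of $f_1-f_2$ at the two ends of the overlap, parity forced odd by nontriviality, minimality giving exactly one crossing) is on the right track, and your concern that bigons must not enclose any $(x_k,0)$ is precisely the right thing to worry about; the half-plane constraint argument you give for ruling it out is reasonable. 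But note that the forward direction as you wrote it uses part (2) to get distinct endpoints, so the gap in (2) propagates here.
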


\begin{defn}
A \textbf{fundamental strand diagram} $d = \{(c(i_l,j_l)\}_{l \in [n]}$ is a collection of $n$ strands on $S_{n,\tilde{\varepsilon}}$ that satisfies the following: 
\begin{enumerate}
\item $c(i_l,j_l)$ is a fundamental strand for all $l$,
\item $c(i_l,j_l)$ neither self-intersects nor forms a loop for all $l$,
\item distinct strands do not intersect nontrivially, and 
\item the graph determined by $d$ contains no cycles.
\end{enumerate}

\end{defn}

Let $D_{\tilde{\varepsilon}}$ denote the set of all fundamental strand diagrams on $S_{n,\tilde{\varepsilon}}$. The next lemma classifies when two nonisomorphic string modules of $Q^{\bm{\varepsilon}}$ define zero, one or two exceptional pairs. The statement of Lemma \ref{lem: key lemma} is nearly identical to that of Lemma 11 in [\ref{ref: combinatorics exceptional sequences type A}]. The proof of this statement using strand diagrams is however is more technical and tedious and relies on several other lemmas. To provide a shorter proof, we will reformulate this lemma using a different combinatorial object and prove it as Lemma \ref{lem: key lemma on annulus}.

\begin{lem} \label{lem: key lemma}
Let $Q^{\bm{\varepsilon}}$ be a quiver of type $\tilde{\mathbb{A}}_{n-1}$ and $U,V \in \text{ind}(\text{rep}_{\Bbbk}(Q^{\bm{\varepsilon}}))$ be two exceptional string modules. 

\begin{enumerate}
\item The strands $\Phi_{\tilde{\varepsilon}}(U)$ and $\Phi_{\tilde{\varepsilon}}(V)$ intersect nontrivially or form a cycle if and only if neither $(U,V)$ nor $(V,U)$ are exceptional pairs. 
\item The strand $\Phi_{\tilde{\varepsilon}}(U)$ is clockwise from $\Phi_{\tilde{\varepsilon}}(V)$ if and only if $(U,V)$ is an exceptional pair and $(V,U)$ is not. 
\item The strands $\Phi_{\tilde{\varepsilon}}(U)$ and $\Phi_{\tilde{\varepsilon}}(V)$ do not intersect at any of their endpoints (up to shift modulo $n$) and they do not intersect nontrivially if and only if both $(U,V)$ and $(V,U)$ form exceptional pairs. 
\end{enumerate}

\end{lem}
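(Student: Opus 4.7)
The plan is to reduce the statement to the universal cover $\tilde{Q}^{\tilde{\bm{\varepsilon}}}$ and then decode each case combinatorially using the basis theorems for Hom and Ext. By Remark \ref{rem: counting dimensions of hom and ext in univ cover}, if $X \in F^{-1}(U)$ is fixed and $\{Y_i\} = F^{-1}(V)$, then $\dim \text{Hom}_{\Bbbk Q}(U,V)$ and $\dim \text{Ext}_{\Bbbk Q}(U,V)$ are sums over $i$ of the corresponding dimensions for $(X,Y_i)$ in $\Bbbk \tilde{Q}^{\tilde{\bm{\varepsilon}}}$. Since a strand on $S_{n,\tilde{\varepsilon}}$ together with the shift-by-$n$ action encodes precisely the set of lifts of a string module, the combinatorial relations between $\Phi_{\tilde\varepsilon}(U)$ and $\Phi_{\tilde\varepsilon}(V)$ (nontrivial intersection, cycle, clockwise configuration, shared endpoint modulo $n$) translate directly to conditions on pairs $(X, Y_i)$ in the cover.

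With this reduction in place, I would build a dictionary between strand interactions in the cover and the basis elements of $\text{Hom}$ and $\text{Ext}$ given by Theorems \ref{thm: basis for hom} and \ref{thm: basis for ext}. A quotient (resp.\ submodule) factorization of a string in the cover is in obvious bijection with a sub-strand of $\Phi_{\tilde\varepsilon}(X)$ whose boundary arrows have the correct direction; an admissible pair for $(X,Y_i)$ is then a common sub-strand with compatible local orientations at each endpoint, and it is two-sided exactly when neither endpoint of the overlap is an endpoint of both strands. A connection $X\alpha Y_i$ or $Y_i\alpha^{-1}X$ is precisely the data of a shared endpoint at which the two strands appear in one of the six local configurations of Figure \ref{fig: six local clockwise configs}.

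The three cases then follow. For (3), if the strands share no endpoint mod $n$ and do not intersect nontrivially, there is neither a common sub-strand nor a connection between any pair of lifts, so all Hom and Ext groups vanish and both orderings are exceptional. For (1), a nontrivial intersection produces a pair of lifts with a common middle sub-strand whose boundary configurations force non-zero two-sided graph maps in both directions, hence non-trivial Ext in both directions; a cycle of strands chains such data and propagates back to the original pair, again obstructing both orderings. For (2), a local clockwise configuration at a shared endpoint yields a single directed datum between the relevant lifts: according to which of the six configurations occurs, this datum is either a connection giving a non-zero extension in one direction, or a two-sided graph map giving a non-zero extension in the other direction. In either case the obstruction lies in exactly one ordering, while the opposite ordering admits no such factorization or connection, so $(U,V)$ is exceptional and $(V,U)$ is not.

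The main obstacle is the case analysis in (2): one must carefully match each of the six local configurations, distinguished by the sign $\tilde{\varepsilon}_k$ at the shared vertex and by which side of the endpoint each strand lives on, to the correct Hom/Ext group and direction, and must separately check that no other graph map or connection exists between any other pair of lifts (which requires exploiting that $U$ and $V$ are exceptional, so that no proper periodic overlap between $X$ and its own $\mathbb{Z}$-shifts produces spurious Hom or Ext). This bookkeeping is what makes the strand-diagram proof technical; as the author indicates, the strategy is to defer the detailed matching to the annular reformulation in Lemma \ref{lem: key lemma on annulus}, where the intrinsic orientation of the annulus makes the clockwise/counterclockwise distinction geometrically uniform and collapses the six configurations into a single statement.
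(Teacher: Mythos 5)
Your high-level strategy — lift to the universal cover via the covering functor and Remark \ref{rem: counting dimensions of hom and ext in univ cover}, then decode each strand configuration into Hom/Ext data on pairs of lifts $(X,Y_m)$ — is the same reduction the paper makes when it proves the annular reformulation, Lemma \ref{lem: key lemma on annulus}. Where you diverge is in what you do on the cover: the paper treats each pair of lifts by citing the type $\mathbb{A}$ result (Lemma 11 of Garver--Igusa--Matherne--Ostroff) as a black box, whereas you propose to rebuild that dictionary from scratch out of the gentle-algebra basis theorems (Theorems \ref{thm: basis for hom} and \ref{thm: basis for ext}). That rebuilt dictionary is genuine content, and the paper does eventually spell it out — but only afterward and on the annulus rather than on strands, in the unlabeled proposition that follows Theorem \ref{thm: ec bijection with arc diagrams}, and there as ``intuition'' rather than as the load-bearing step. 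So your route is viable and more self-contained, but it front-loads a case analysis the paper deliberately outsources.

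There is, however, a concrete error in the dictionary you propose for part (2). You assert that a shared endpoint in any of the six clockwise configurations of Figure \ref{fig: six local clockwise configs} encodes a connection, and that the clockwise obstruction is ``either a connection giving a non-zero extension in one direction, or a two-sided graph map giving a non-zero extension in the other direction.'' Both halves are wrong. Only the two configurations in which the strands leave the shared vertex on opposite sides (configurations (a) and (d)) encode a connection $U\alpha V$; the other four, where both strands leave on the same side, encode a graph map that is \emph{not} two-sided. And a two-sided graph map cannot arise in the clockwise case at all: a two-sided $f_T\in\text{Hom}(U,V)$ forces $\text{Hom}(U,V)\neq 0$ and $\text{Ext}(V,U)\neq 0$ simultaneously, which obstructs \emph{both} orderings $(U,V)$ and $(V,U)$ — that is the content of your part (1), not part (2), and it is precisely why two-sided graph maps match nontrivial crossings. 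The correct dichotomy for the clockwise case is: either $U$ is connectable to $V$ (so $\text{Ext}(U,V)\neq0$ while $\text{Hom}(U,V)=0$), or there is a non-two-sided graph map $U\to V$ (so $\text{Hom}(U,V)\neq0$ while $\text{Ext}(U,V)=0$), and in both subcases $\text{Hom}(V,U)=\text{Ext}(V,U)=0$, giving exactly one exceptional ordering. Substituting this dichotomy repairs your argument and brings it into agreement with the paper's proposition on the annulus.
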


Our first result follows from Lemma \ref{lem: key lemma}. The proof of Theorem \ref{thm: bijection with strand diagrams} is nearly identical to that of Theorem 12 in [\ref{ref: combinatorics exceptional sequences type A}] and we will reformulate the statement and prove it as Theorem \ref{thm: ec bijection with arc diagrams}. 

\begin{thm}\label{thm: bijection with strand diagrams}
Let $\bar{E}_{\bm{\varepsilon}} := \{ \text{exceptional collections of } Q^{\bm{\varepsilon}}\}$ where $Q^{\bm{\varepsilon}}$ is a quiver of type $\tilde{\mathbb{A}}_{n-1}$. There is a bijection $\bar{E}_{\bm{\varepsilon}} \overset{\bar{\Phi}_{\bm{\varepsilon}}}{\longrightarrow} D_{\tilde{\varepsilon}}$ given by $\bar{\Phi}_{\bm{\varepsilon}}: \{i_lj_{l_k}\}_{l \in [n]} \mapsto \{c(\tilde{i}_l,\tilde{j}_l)\}_{l \in [n]}$. $\hfill \square$ \\
\end{thm}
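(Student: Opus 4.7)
The plan is to use Lemma~\ref{lem: key lemma} to translate each axiom of an exceptional collection into a geometric condition on strands, then verify the three requirements (well-defined, injective, surjective) in turn. Throughout, I would write $\Phi_{\tilde{\varepsilon}}(V_l) = c(\tilde{i}_l, \tilde{j}_l)$ for shorthand.

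First, for well-definedness, suppose $\xi = \{V_1,\dots,V_n\}$ is an exceptional collection. Each $\Phi_{\tilde{\varepsilon}}(V_l)$ is a fundamental strand by the very construction of $\Phi_{\tilde{\varepsilon}}$, so condition (1) of a fundamental strand diagram is immediate. Since each $V_l$ is exceptional we have $\operatorname{Ext}(V_l,V_l)=0$, which by the self-intersection/loop analog of Lemma~\ref{lem: key lemma} (applied with $U=V=V_l$) forces $\Phi_{\tilde{\varepsilon}}(V_l)$ to neither self-intersect nor form a loop, yielding condition (2). For condition (3), if two distinct strands intersected nontrivially, part~(1) of Lemma~\ref{lem: key lemma} would say that neither $(V_l,V_m)$ nor $(V_m,V_l)$ is exceptional, contradicting the existence of an ordering on $\xi$ making it an exceptional sequence. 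For condition (4), a cycle in the graph determined by $\bar{\Phi}_{\bm{\varepsilon}}(\xi)$ would produce a cyclic sequence of strands each locally clockwise from the next; by Lemma~\ref{lem: key lemma}(2) this would force $(V_{l_1},V_{l_2}),(V_{l_2},V_{l_3}),\dots,(V_{l_k},V_{l_1})$ all to be the unique exceptional orderings of their respective pairs, which contradicts the acyclicity required of any linear exceptional ordering.

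Injectivity of $\bar{\Phi}_{\bm{\varepsilon}}$ is essentially free: $\Phi_{\tilde{\varepsilon}}$ is injective on string modules by its definition, and $\bar{\Phi}_{\bm{\varepsilon}}$ acts elementwise, so two distinct exceptional collections produce distinct strand diagrams as sets.

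For surjectivity, given $d = \{c(\tilde{i}_l,\tilde{j}_l)\}_{l\in[n]} \in D_{\tilde{\varepsilon}}$, define $V_l := \Phi_{\tilde{\varepsilon}}^{-1}(c(\tilde{i}_l,\tilde{j}_l))$. Each $V_l$ is a string module, and condition (2) together with the self-version of Lemma~\ref{lem: key lemma} yields $\operatorname{Ext}(V_l,V_l)=0$, so each $V_l$ is exceptional. It remains to order the $V_l$'s into an exceptional sequence. Build a directed graph $G$ on $\{V_1,\dots,V_n\}$ by drawing $V_l \to V_m$ whenever $\Phi_{\tilde{\varepsilon}}(V_l)$ is clockwise from $\Phi_{\tilde{\varepsilon}}(V_m)$ (so that, by Lemma~\ref{lem: key lemma}(2)--(3), $V_l$ must precede $V_m$ in the sequence). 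Any cycle in $G$ would descend to a cycle in the graph determined by $d$ (using that clockwise implies locally clockwise), contradicting condition (4). Therefore $G$ is acyclic and admits a linear extension $V_{\sigma(1)},\dots,V_{\sigma(n)}$; by construction and Lemma~\ref{lem: key lemma}, each ordered pair $(V_{\sigma(a)},V_{\sigma(b)})$ with $a<b$ is exceptional, so the sequence is exceptional, proving that $\xi = \{V_1,\dots,V_n\}$ is an exceptional collection with $\bar{\Phi}_{\bm{\varepsilon}}(\xi) = d$.

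The main obstacle is the cycle-to-acyclicity step in surjectivity: the strand-diagram cycle condition is phrased in terms of \emph{locally} clockwise, while the ordering obstruction in $G$ records the stronger \emph{clockwise} relation. The argument needs the observation that whenever two non-trivially-disjoint strands in $d$ share an endpoint (the only case in which clockwise versus locally clockwise can differ), the two notions coincide on that pair because there is a single shared endpoint to inspect, so cycles in $G$ really do pull back to cycles in the determined graph. Verifying this, together with carefully justifying the self-version of Lemma~\ref{lem: key lemma} used above (which the author's reformulated Lemma~\ref{lem: key lemma on annulus} should supply), is the delicate part of the argument.
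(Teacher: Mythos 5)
Your proposal tracks the paper's own argument closely: the author defers the strand-diagram statement to the reformulated Theorem~\ref{thm: ec bijection with arc diagrams}, whose proof uses exactly the same decomposition you outline — Lemma~\ref{lem: regulars are small} for the no self-intersection/loop condition, Lemma~\ref{lem: key lemma on annulus}(1) for the no-crossing condition, a contradiction argument via part~(2) for the no-cycle condition, and in the reverse direction an iterated ``pick a source'' argument to order the modules. Your DAG-plus-linear-extension phrasing of surjectivity is an equivalent repackaging of the paper's greedy selection (a finite DAG always has a source, which is precisely what the paper's ``for if not, $D$ would contain a cycle'' step establishes).

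Two small points worth tightening. First, what you call the ``self-intersection/loop analog of Lemma~\ref{lem: key lemma} applied with $U=V$'' is not literally an instance of that lemma (which is stated for two exceptional modules); the paper isolates this as a separate statement, Lemma~\ref{lem: regulars are small}, and that is the citation you actually need in both the well-definedness and surjectivity steps. Second, your worry about ``clockwise'' versus ``locally clockwise'' resolves more cleanly than you suggest: two strands in a fundamental strand diagram can share endpoints at more than one shift, but since the diagram has no $2$-cycles, the locally-clockwise relation must agree at every shared endpoint, so it coincides with the global clockwise relation for every pair in the diagram. This is why the edge-set of your graph $G$ is well defined and why any directed cycle in $G$ is literally a cycle in the sense of the strand-diagram definition. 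With those references fixed, the argument is complete and matches the paper's.
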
 

\begin{exmp} \label{exmp: example of strand diagram corresp to exceptional collection}
Consider the quiver $Q^{\bm{\varepsilon}}$ with $\bm{\varepsilon} = (-,+,-,+,+)$ as in Figure \ref{fig: underlying graph and example of orientation}. Then we have an exceptional collection given by $\{13_2, 14_3,15_4, 25_3, 12_6\}$. The corresponding fundamental strand diagram is shown below with the black strands corresponding to the regular modules and the other strands corresponding to the preprojectives.

\begin{center}

\tikzset{every picture/.style={line width=0.75pt}} 

\begin{tikzpicture}[x=0.75pt,y=0.75pt,yscale=-.9,xscale=1]

\draw    (248,127) .. controls (249.94,160.24) and (280.01,37.83) .. (305.11,70.42) .. controls (330.22,103.01) and (342.22,148.01) .. (346.22,125.01) ;
\draw    (248,127) .. controls (249.13,173.27) and (277.46,44.9) .. (307.04,86.42) .. controls (336.61,127.95) and (332.22,159.01) .. (405.22,126.01) ;
\draw  [color={rgb, 255:red, 0; green, 0; blue, 255 }  ]   (248,127) .. controls (247.31,175.59) and (290.22,58.01) .. (304.33,89.81) .. controls (318.44,121.62) and (340.22,168.01) .. (396.68,138.56) .. controls (453.13,109.11) and (454.76,80.08) .. (466.22,81.01) .. controls (477.68,81.94) and (498.36,141.2) .. (525.3,133.26) .. controls (552.23,125.32) and (560.22,78.01) .. (570.22,111.01) ;
\draw   [color={rgb, 255:red, 0; green, 255; blue, 0 }  ]  (248,127) .. controls (247.3,176.66) and (293.23,67.67) .. (305.23,99.34) .. controls (317.22,131.01) and (360.59,186.79) .. (415.41,137.4) .. controls (470.22,88.01) and (454.22,90.01) .. (466.22,112.01) ;
\draw   (298,112) .. controls (298.2,78.74) and (339.22,201.01) .. (399.84,160.2) .. controls (460.46,119.39) and (447.22,91.01) .. (466.22,112.01) ;

\draw (241,109.4) node [anchor=north west][inner sep=0.75pt]    {$1$};
\draw (293,109.4) node [anchor=north west][inner sep=0.75pt]    {$2$};
\draw (341,109.4) node [anchor=north west][inner sep=0.75pt]    {$3$};
\draw (401,109.4) node [anchor=north west][inner sep=0.75pt]    {$4$};
\draw (460,110.4) node [anchor=north west][inner sep=0.75pt]    {$5$};
\draw (514,110.4) node [anchor=north west][inner sep=0.75pt]    {$6$};
\draw (565,111.4) node [anchor=north west][inner sep=0.75pt]    {$7$};
\draw (192,109.4) node [anchor=north west][inner sep=0.75pt]    {$0$};

\draw (600,115.4) node [anchor=north west][inner sep=0.75pt]    {$\dotsc $};
\draw (155,115.4) node [anchor=north west][inner sep=0.75pt]    {$\dotsc $};

\end{tikzpicture}

\end{center}
\end{exmp}
\vspace{-1.5 cm}

\subsection{Chord Diagrams and Arc Diagrams} \label{sec: chords and annuli}

Let $S_{n,+} := \{\dots, (1,y_{-1}), (1,y_0), (1,y_1), \dots\}\subset\mathbb{R}^2$ and $S_{n,-} := \{\dots, (-1,y'_{-1}), (-1,y'_0)$, 
$(-1,y'_1), \dots\}\subset\mathbb{R}^2$ be two collections of points such that $y_i \neq y'_j$ for all $i$ and $j$, arranged on the lines $x = 1$ and $x = -1$ respectively from bottom to top in natural numerical order together with the function $\tilde{\varepsilon} : S_{n,+(-)} \rightarrow \{+(-)\}$ sending $(1(-1),y_i(y'_i)) \mapsto \tilde{\varepsilon_i}$. Again, the reason for decorating this set with a subscript $n$ is because we will be taking these vertices modulo $n$ once we associate these diagrams to an $\tilde{\mathbb{A}}_{n-1}$ quiver.

\begin{defn} 
A \textbf{chord} $C(i,j)=C(j,i)$ is defined as follows:  
\begin{itemize}
\item if $y_i \in S_{n,+(-)}$ and $y'_j \in S_{n,-(+)}$ then $C(i,j)$ is the straight line segment beginning at  $(1(-1),y_i)$ and ending at $(-1(1),y'_j)$.
\item if $y_i \in S_{n,+(-)}$ and $y_j \in S_{n,+(-)}$ then $C(i,j)$ is an isotopy class of simple curves in $\mathbb{R}^2$ where any $\gamma \in C(i,j)$ satisfies: 
\begin{enumerate}
\item the endpoints of $\gamma$ are $(1(-1),y_i)$ and $(1(-1),y_j)$, 
\item as a subset of $\mathbb{R}^2$, $\gamma \subset \{(x,y)\in \mathbb{R}^2 : y_{\text{min}(i,j)} \leq y \leq y_{\text{max}(i,j)}, \, -1 < x < 1\} \cup \{(1(-1),y_i),(1(-1),y_j)\}$.
\end{enumerate}
\end{itemize} 
\end{defn}

Note that any non-linear chord $C(i,j)$ can be represented by a \textbf{monotone curve} $\gamma \in C(i,j)$. A \textbf{chord diagram} is any collection of chords on $S_{n,+} \cup S_{n,-}$. The \textbf{fundamental domain} of a chord diagram is $\{(x,y) : -1 \leq x \leq 1, \, \text{min}(y_0,y'_0) < y < \text{max}(y_n,y'_n)\}$. Any other fundamental domain is given by shifting the fundamental domain vertically modulo $n$. Given a string module, we define the \textbf{fundamental chord} associated to the module analogously to Definition \ref{defn: fundamental lift}. We say that two chords $C(i_1,j_1)$ and $C(i_2,j_2)$ \textbf{intersect nontrivially} if there exists $z\in\mathbb{Z}$ such that any two representatives $\gamma_1 \in C(i_{1} + nz,j_{1} + nz)$ and $\gamma_2 \in C(i_2,j_2)$ intersect in their interiors. Note that we can always take this intersection to occur in the interior of a fundamental domain. Otherwise we say that $C(i_1,j_1)$ and $C(i_2,j_2)$ \textbf{do not intersect nontrivially}. We say a chord $C(i,j)$ \textbf{self-intersects} if there exists $z\in\mathbb{Z}$ such that any two representatives $\gamma_1 \in C(i + nz,j + nz)$ and $\gamma_2 \in C(i,j)$ intersect in their interiors. Otherwise we say that $C(i,j)$ \textbf{does not self-intersect}. If $C(i_1,j_1)$ and $C(i_2,j_2)$ do not intersect nontrivially, we say $C(i_2,j_2)$ is \textbf{clockwise} from $C(i_1,j_1)$ (or equivalently $C(i_1,j_1)$ is \textbf{counterclockwise} from $C(i_2,j_2)$) if and only if for all $z \in \mathbb{Z}$ such that $C(i_1+nz,j_1+nz)$ and $C(i_2,j_2)$ share an endpoint, we have the following. If this point is in $S_{n,+}\, (S_{n,-})$, then the chord lying locally above (below) the other is clockwise from the lower (upper) chord. We say $C(i_2,j_2)$ is \textbf{locally clockwise} from $C(i_1,j_1)$ (or equivalently $C(i_1,j_1)$ is \textbf{locally counterclockwise} from $C(i_2,j_2)$) if there exists such a $z$. As before, we say that a collection of chords $\{C(i_1,j_1), C(i_2,j_2), \dots , C(i_k,j_k)\}$ form a \textbf{cycle} if and only if $C(i_l,j_l)$ is locally clockwise from $C(i_{l+1},j_{l+1})$ for all $l < k$ and $C(i_k,j_k)$ is locally clockwise from $C(i_1,j_1)$. We say a chord $C(i,j)$ is a \textbf{loop} if it does not self-intersect and $C(i,j)$ is both locally clockwise and locally counterclockwise from itself.

\begin{defn}
A \textbf{fundamental chord diagram} $C_{Q^{\bm{\varepsilon}}}$ is a collection of $n$ chords on $S_{n,+} \cup S_{n,-}$ that satisfies the following: 
\begin{enumerate}
\item all $n$ chords are fundamental,
\item all chords neither self-intersect nor form a loop,
\item distinct chords do not intersect nontrivially, and 
\item the chords do not form any cycles.
\end{enumerate}

\end{defn}

Let $C_{\tilde{\varepsilon}}$ denote the set of fundamental chord diagrams $C_{Q^{\bm{\varepsilon}}}$. Note that there exists a homeomorphism between any two fundamental strand and chord diagrams. Given a strand diagram, if $\tilde{\varepsilon}_i = +(-)$, we send $(x_i,0)$ to $(x_i,1(-1))$, then straighten the strands that begin (end) at $(x_i,1(-1))$ and end (begin) at $(x_j,-1(1))$. We then send those that begin at $(x_i,1(-1))$ and end at $(x_j,1(-1))$ to a simple monotone curve. Finally a ${\pi\over 2}$ counterclockwise rotation about the origin gives the fundamental chord diagram, as in the next Example. Call this homeomorphism $\Psi$.

\begin{exmp} \label{exmp: fundamental chord diagram}

For $Q^{\bm{\varepsilon}}$ where $\bm{\varepsilon} = (-, +, +, -)$, $\Psi$ acts as follows.

\begin{center}
\tikzset{every picture/.style={line width=0.75pt}} 

\begin{tikzpicture}[x=0.65pt,y=0.65pt,yscale=-1,xscale=1]

\draw    [color={rgb, 255:red, 189; green, 16; blue, 224 }  ,draw opacity=1 ] (80,159) .. controls (88.22,129.01) and (108.22,212.01) .. (131.22,179.01) ;
\draw  [color={rgb, 255:red, 208; green, 2; blue, 27 }  ,draw opacity=1 ]  (131.22,179.01) .. controls (196.22,236.01) and (223.22,123.01) .. (235.22,158.01) ;
\draw   [color={rgb, 255:red, 74; green, 144; blue, 226 }  ,draw opacity=1 ]   (181,179) .. controls (186.22,206.01) and (230.22,122.01) .. (235.22,158.01) ;
\draw    (237,130) -- (306.55,84.11) ;
\draw [shift={(308.22,83.01)}, rotate = 146.58] [color={rgb, 255:red, 0; green, 0; blue, 0 }  ][line width=0.75]    (10.93,-3.29) .. controls (6.95,-1.4) and (3.31,-0.3) .. (0,0) .. controls (3.31,0.3) and (6.95,1.4) .. (10.93,3.29)   ;
\draw   [color={rgb, 255:red, 248; green, 231; blue, 28 }  ,draw opacity=1 ]   (32.22,161.01) .. controls (95.22,95.01) and (107.22,206.01) .. (131.22,179.01) ;
\draw   [color={rgb, 255:red, 189; green, 16; blue, 224 }  ,draw opacity=1 ] (395.22,93.01) .. controls (403.44,63.01) and (426.22,68.01) .. (449.22,35.01) ;
\draw     [color={rgb, 255:red, 208; green, 2; blue, 27 }  ,draw opacity=1 ] (449.22,35.01) .. controls (514.22,92.01) and (539.22,63.01) .. (551.22,98.01) ;
\draw    [color={rgb, 255:red, 74; green, 144; blue, 226 }  ,draw opacity=1 ]  (499.22,33.01) .. controls (504.44,60.01) and (546.22,62.01) .. (551.22,98.01) ;
\draw   [color={rgb, 255:red, 248; green, 231; blue, 28 }  ,draw opacity=1 ]  (346.22,91.01) .. controls (409.22,25.01) and (425.22,62.01) .. (449.22,35.01) ;
\draw    (455,91) -- (457.17,175.01) ;
\draw [shift={(457.22,177.01)}, rotate = 268.52] [color={rgb, 255:red, 0; green, 0; blue, 0 }  ][line width=0.75]    (10.93,-3.29) .. controls (6.95,-1.4) and (3.31,-0.3) .. (0,0) .. controls (3.31,0.3) and (6.95,1.4) .. (10.93,3.29)   ;
\draw  [color={rgb, 255:red, 74; green, 144; blue, 226 }  ,draw opacity=1 ]  (400,220) -- (529.22,203.01) ;
\draw   [color={rgb, 255:red, 208; green, 2; blue, 27 }  ,draw opacity=1 ]   (400.22,254.01) -- (529.22,203.01) ;
\draw  [color={rgb, 255:red, 189; green, 16; blue, 224 }  ,draw opacity=1 ]  (400.22,254.01) -- (533.22,275.01) ;
\draw  [color={rgb, 255:red, 248; green, 231; blue, 28 }  ,draw opacity=1 ]   (400.22,254.01) -- (523.22,313.01) ;

\draw (128,161.4) node [anchor=north west][inner sep=0.75pt]    {$1$};
\draw (74,161.4) node [anchor=north west][inner sep=0.75pt]    {$0$};
\draw (178,161.4) node [anchor=north west][inner sep=0.75pt]    {$2$};
\draw (230,161.4) node [anchor=north west][inner sep=0.75pt]    {$3$};
\draw (16,162.4) node [anchor=north west][inner sep=0.75pt]    {$-1$};
\draw (271,160.4) node [anchor=north west][inner sep=0.75pt]    {$4$};
\draw (443,15.4) node [anchor=north west][inner sep=0.75pt]    {$1$};
\draw (389,95.4) node [anchor=north west][inner sep=0.75pt]    {$0$};
\draw (493,14.4) node [anchor=north west][inner sep=0.75pt]    {$2$};
\draw (546,98.4) node [anchor=north west][inner sep=0.75pt]    {$3$};
\draw (330,94.4) node [anchor=north west][inner sep=0.75pt]    {$-1$};
\draw (586,99.4) node [anchor=north west][inner sep=0.75pt]    {$4$};
\draw (389,246.4) node [anchor=north west][inner sep=0.75pt]    {$1$};
\draw (534,266.4) node [anchor=north west][inner sep=0.75pt]    {$0$};
\draw (388,212.4) node [anchor=north west][inner sep=0.75pt]    {$2$};
\draw (530,192.4) node [anchor=north west][inner sep=0.75pt]    {$3$};
\draw (526,309.4) node [anchor=north west][inner sep=0.75pt]    {$-1$};
\draw (531,159.4) node [anchor=north west][inner sep=0.75pt]    {$4$};

\end{tikzpicture}

\end{center}

\end{exmp}

We can conclude that two chords cross if and only if their preimages under $\Psi$ cross. Moreover, we notice that $\Psi$ preserves the clockwise nature of strands. Therefore we have the following reformulation of Lemma \ref{lem: key lemma}: 

\begin{lem}
Let $Q^{\bm{\varepsilon}}$ be a quiver of type $\tilde{\mathbb{A}}_{n-1}$ and let $U,V \in \text{ind}(\text{rep}_{\Bbbk}(Q^{\bm{\varepsilon}}))$ be two exceptional string modules. 

\begin{enumerate}
\item The fundamental chords $\Psi \circ \Phi_{\tilde{\varepsilon}}(U)$ and $\Psi \circ \Phi_{\tilde{\varepsilon}}(V)$ intersect nontrivially or form a cycle if and only if neither $(U,V)$ nor $(V,U)$ are exceptional pairs. 
\item The chord $\Psi \circ \Phi_{\tilde{\varepsilon}}(U)$ is clockwise from $\Psi \circ \Phi_{\tilde{\varepsilon}}(V)$ if and only if $(U,V)$ is an exceptional pair and $(V,U)$ is not. 
\item The chords $\Psi \circ \Phi_{\tilde{\varepsilon}}(U)$ and $\Psi \circ \Phi_{\tilde{\varepsilon}}(V)$ do not intersect at any of their endpoints (up to shift modulo $n$) and they do not intersect nontrivially if and only if both $(U,V)$ and $(V,U)$ form exceptional pairs.  \qed
\end{enumerate}
\end{lem}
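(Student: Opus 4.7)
The plan is to deduce this statement directly from Lemma \ref{lem: key lemma} by transporting each of the three equivalences across the homeomorphism $\Psi$. Concretely, since $\Psi$ is constructed as a composition of (i) lifting each endpoint $(x_i,0)$ to $(1,y_i)$ or $(-1,y_i')$ depending on the sign $\tilde{\varepsilon}_i$, (ii) straightening or monotonizing the resulting curves inside the strip, and (iii) a rigid rotation by $\pi/2$, it restricts to a homeomorphism of the underlying punctured planes that carries fundamental strands to fundamental chords and is equivariant under the $n$-shift. In particular it sends monotone representatives of strands to monotone representatives of chords.

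First I would verify that $\Psi$ preserves nontrivial intersections. Because $\Psi$ is a homeomorphism of the ambient space (punctured appropriately at the remaining vertices), any transverse crossing between the interiors of two strand representatives is carried to a transverse crossing of the images, and conversely. Combined with the fact noted after the chord definition that any chord has a monotone representative, this shows that $\Psi \circ \Phi_{\tilde{\varepsilon}}(U)$ and $\Psi \circ \Phi_{\tilde{\varepsilon}}(V)$ intersect nontrivially (respectively, self-intersect) if and only if $\Phi_{\tilde{\varepsilon}}(U)$ and $\Phi_{\tilde{\varepsilon}}(V)$ do. In particular the first parts of clauses (1) and (3) of Lemma \ref{lem: key lemma} transfer immediately.

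Next I would check that $\Psi$ preserves the locally clockwise relation at a shared endpoint. This is the main step that requires care: one must inspect each of the six local configurations in Figure \ref{fig: six local clockwise configs} and confirm that after applying $\Psi$ they become, respectively, the chord-theoretic clockwise configurations. A vertex $(x_k,0)$ with $\tilde{\varepsilon}_k = +$ gets lifted into $S_{n,+}$ with strands locally below $(x_k,0)$ mapped to chord germs leaving $(1,y_k)$ into the strip; the quarter-turn converts ``rightward of the other curve, below the vertex'' to ``above the other curve, leaving into the strip,'' which is precisely the definition of the clockwise chord at a point of $S_{n,+}$. The symmetric analysis at $\tilde{\varepsilon}_k = -$ vertices, where endpoints land in $S_{n,-}$ and ``lower'' becomes ``clockwise,'' handles the remaining three configurations. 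Consequently $\Psi$ preserves (local) clockwiseness at every endpoint, and therefore carries cycles of strands to cycles of chords and vice versa.

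With these three invariances in hand---nontrivial intersection, local clockwiseness at shared endpoints, and formation of cycles---the three clauses of the present lemma become exact translations of the corresponding clauses of Lemma \ref{lem: key lemma}, and the proof concludes by applying that lemma. The only real obstacle is the endpoint analysis in the preceding paragraph, and that is reduced to a finite case-check against Figure \ref{fig: six local clockwise configs}.
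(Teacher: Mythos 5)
Your proposal is correct and matches the paper's approach exactly: the paper's entire ``proof'' of this chord lemma is the preceding observation that $\Psi$ preserves both crossings and the clockwise relation, after which the lemma is declared a reformulation of Lemma \ref{lem: key lemma} and marked $\qed$. Your write-up simply makes explicit the shift-equivariance of $\Psi$ and the case-check matching the six configurations of Figure \ref{fig: six local clockwise configs} to the chord-level clockwise definition, which the paper leaves implicit. One logical wrinkle worth noting (present equally in the paper): at this point in the exposition Lemma \ref{lem: key lemma} has only been \emph{stated}, with its proof deferred to the arc-diagram version Lemma \ref{lem: key lemma on annulus}; so strictly speaking what both you and the paper establish here is the \emph{equivalence} of the strand and chord formulations, and the actual anchoring proof comes later with the arc lemma and the covering-functor argument.
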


\noindent
As a consequence, we have the following reformulation of Theorem \ref{thm: bijection with strand diagrams}.

\begin{thm}
Let $\bar{E}_{\tilde{\varepsilon}} := \{ \text{exceptional collections of } Q^{\bm{\varepsilon}}\}$ where $Q^{\bm{\varepsilon}}$ is a quiver of type $\tilde{\mathbb{A}}_{n-1}$. There is a bijection $\bar{E}_{\tilde{\varepsilon}} \overset{\overline{\Psi \circ \Phi}_{\tilde{\varepsilon}}}{\longrightarrow} C_{\tilde{\varepsilon}}$ given by $\overline{\Psi \circ \Phi}_{\tilde{\varepsilon}}: \{i_lj_{l_k}\}_{l \in [n]} \mapsto \{C(\tilde{i}_l,\tilde{j}_l)\}_{l \in [n]}$. $\hfill \square$ 
\end{thm}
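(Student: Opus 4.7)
The plan is to deduce this theorem directly from Theorem \ref{thm: bijection with strand diagrams} by transporting the bijection along the homeomorphism $\Psi$ defined just before Example \ref{exmp: fundamental chord diagram}. First I would promote $\Psi$ to a map $\tilde{\Psi}: D_{\tilde{\varepsilon}} \to C_{\tilde{\varepsilon}}$ on diagrams by sending $\{c(\tilde{i}_l, \tilde{j}_l)\}_{l \in [n]}$ to $\{C(\tilde{i}_l, \tilde{j}_l)\}_{l \in [n]}$. Because $\Psi$ is constructed endpoint-by-endpoint according to $\tilde{\varepsilon}$ (sending $(x_i, 0)$ to $(1, y_i)$ when $\tilde{\varepsilon}_i = +$ and to $(-1, y'_i)$ when $\tilde{\varepsilon}_i = -$, then straightening, then rotating by $\pi/2$), the underlying string module — and hence the property of being fundamental — is preserved, so $\tilde{\Psi}$ takes fundamental strands to fundamental chords.

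Next I would check that $\tilde{\Psi}$ respects the four axioms defining a fundamental chord diagram. The observations recorded just before the theorem — that two chords cross if and only if their preimages under $\Psi$ cross, and that $\Psi$ preserves the clockwise/counterclockwise relation — immediately handle the nontrivial-intersection and self-intersection conditions. Since loops and cycles are defined purely in terms of local clockwise relations, they likewise transfer. Thus $\tilde{\Psi}$ is a well-defined bijection between $D_{\tilde{\varepsilon}}$ and $C_{\tilde{\varepsilon}}$, with inverse given by applying $\Psi^{-1}$ chord-by-chord.

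Finally, composing yields $\overline{\Psi \circ \Phi}_{\tilde{\varepsilon}} = \tilde{\Psi} \circ \bar{\Phi}_{\bm{\varepsilon}}$, which is a bijection as the composite of two bijections. Evaluating on an exceptional collection $\{i_l j_{l_k}\}_{l \in [n]}$, Theorem \ref{thm: bijection with strand diagrams} sends it to $\{c(\tilde{i}_l, \tilde{j}_l)\}_{l \in [n]}$ and then $\tilde{\Psi}$ sends that to $\{C(\tilde{i}_l, \tilde{j}_l)\}_{l \in [n]}$, matching the stated formula.

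The main obstacle is bookkeeping rather than genuine difficulty: one must verify that each of the six local configurations of Figure \ref{fig: six local clockwise configs} that define clockwise-ness of strands at a shared endpoint corresponds under $\Psi$ to the intended local configuration of chords meeting at a common point of $S_{n,+} \cup S_{n,-}$ (namely, the clockwise chord lying locally above a shared endpoint in $S_{n,+}$, or locally below one in $S_{n,-}$). Once this correspondence is spelled out, the theorem is a purely formal transport of Theorem \ref{thm: bijection with strand diagrams} along $\tilde{\Psi}$, with no further appeal to the homological content of Lemma \ref{lem: key lemma} needed beyond what was already used in the strand-diagram version.
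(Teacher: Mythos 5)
Your proposal is correct and takes essentially the same approach as the paper: the paper's (unwritten) proof consists precisely of the two observations you use — that $\Psi$ preserves crossings and the local clockwise relation — which it records in the sentences immediately preceding the theorem, and then transports the bijection $\bar{\Phi}_{\bm{\varepsilon}}$ of Theorem \ref{thm: bijection with strand diagrams} along $\Psi$. Your spelling out of the four axioms and the six local configurations is a more explicit version of the same argument, not a different one.
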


Given our quiver $Q^{\bm{\varepsilon}}$, we will now associate an annulus $A_{Q^{\bm{\varepsilon}}}$ whose difference between outer and inner radii is $r$ as follows. If $\varepsilon_i = +(-)$, then $i$ is a marked point  on the outer (inner) circle of the annulus. We moreover write the vertices in clockwise order respecting the natural numerical order of the vertices. We adopt the convention that $i \in \{0,1,\dots,n-1\}$ where we identify $n$ and $0$. 

\begin{defn}
Let $i,j\in\{0,1,\dots , n-1\}$ be such that $i\neq j$. An \textbf{arc} $a(i,j)[\lambda]$ on $A_{Q^{\bm{\varepsilon}}}$ is an isotopy class of simple curves in $A_{Q^{\bm{\varepsilon}}}$ where any $\gamma \in a(i,j)[\lambda]$ satisfies: 
\begin{enumerate}
\item $\gamma$ begins at $i$ and ends at $j$, 
\item $\gamma$ travels clockwise through the interior of the annulus from $i$ to $j$. 
\item If $\gamma$ begins and ends on the same boundary component, the integer $\lambda$ is the winding number of $\gamma$ about the inner boundary component. If $\gamma$ connects the two boundary components, $\lambda$ is the clockwise winding number of $\gamma$ about the inner boundary circle of $A_{Q^{\bm{\varepsilon}}}$ when traversing $\gamma$ beginning from the outer boundary component. 
\end{enumerate}
  
A collection of such arcs will be called an \textbf{arc diagram}. If an arc from $i$ to $j$ has counter clockwise winding number $k$, we write $a(i,j)[-k]$. Arcs that begin at one boundary component and end at another will be called \textbf{bridging arcs}, while those that begin and end on the same boundary component will be called \textbf{exterior arcs}.  

\end{defn}

\begin{rem}
This annulus is not new in the study of indecomposable representations of type $\tilde{\mathbb{A}}$ quivers. In fact, mathematicians have been using triangulations of marked surfaces to study these algebras as seen in [\ref{ref: clusters are in bijection with triangulations}], [\ref{ref: Master's Thesis Clusters and Triangulations}], and [\ref{ref: BCS}] to name a few. The main difference between the model presented here and the models in the aforementioned papers is that the bijection between arcs and indecomposable representations is different. In the aforementioned papers, the algebra is given by a triangulation of the marked surface. Upon drawing an arc in this triangulation, this arc crosses a finite number of the diagonals in the triangulation. The number of times the arc crosses a particular diagonal then gives the dimension of the vectorspace at the vertex corresponding to this arc. Here, our bijection between arcs and indecomposable representations is different. In [\ref{ref: Igusa Maresca}], Igusa and the author have shown precisely when the convention in [\ref{ref: Master's Thesis Clusters and Triangulations}] and the one in this paper coincide.
\end{rem}

\begin{rem}\label{rem: Duality on Annulus}
It is well known that there is a duality $D: \text{rep}_{\Bbbk}Q \rightarrow \text{rep}_{\Bbbk}Q^{op}$. Suppose the quiver $Q^{\bm\varepsilon}$ has annulus $A_{Q^{\bm{\varepsilon}}}$ with $p$ marked points on the inner boundary and $q$ on the outer boundary. Then the annulus associated to the opposite quiver is the one with $q$ marked points on the inner boundary and $p$ on the outer boundary. We can visualize the duality functor topologically as a homeomorphism as follows. 
\begin{center}
\resizebox{12cm}{9cm}{%

\tikzset{every picture/.style={line width=0.75pt}} 

\begin{tikzpicture}[x=0.75pt,y=0.75pt,yscale=-1,xscale=1]

\draw   (103.35,411.7) .. controls (103.35,396.95) and (114.69,384.99) .. (128.68,384.99) .. controls (142.66,384.99) and (154.01,396.95) .. (154.01,411.7) .. controls (154.01,426.44) and (142.66,438.4) .. (128.68,438.4) .. controls (114.69,438.4) and (103.35,426.44) .. (103.35,411.7)(65.35,411.7) .. controls (65.35,375.97) and (93.7,347) .. (128.68,347) .. controls (163.65,347) and (192,375.97) .. (192,411.7) .. controls (192,447.43) and (163.65,476.39) .. (128.68,476.39) .. controls (93.7,476.39) and (65.35,447.43) .. (65.35,411.7) ;
\draw    (128.68,384.99) ;
\draw [shift={(128.68,384.99)}, rotate = 0] [color={rgb, 255:red, 0; green, 0; blue, 0 }  ][fill={rgb, 255:red, 0; green, 0; blue, 0 }  ][line width=0.75]      (0, 0) circle [x radius= 3.35, y radius= 3.35]   ;
\draw    (154.01,411.7) ;
\draw [shift={(154.01,411.7)}, rotate = 0] [color={rgb, 255:red, 0; green, 0; blue, 0 }  ][fill={rgb, 255:red, 0; green, 0; blue, 0 }  ][line width=0.75]      (0, 0) circle [x radius= 3.35, y radius= 3.35]   ;
\draw    (103.35,411.7) ;
\draw [shift={(103.35,411.7)}, rotate = 0] [color={rgb, 255:red, 0; green, 0; blue, 0 }  ][fill={rgb, 255:red, 0; green, 0; blue, 0 }  ][line width=0.75]      (0, 0) circle [x radius= 3.35, y radius= 3.35]   ;
\draw    (169.01,362.7) ;
\draw [shift={(169.01,362.7)}, rotate = 0] [color={rgb, 255:red, 0; green, 0; blue, 0 }  ][fill={rgb, 255:red, 0; green, 0; blue, 0 }  ][line width=0.75]      (0, 0) circle [x radius= 3.35, y radius= 3.35]   ;
\draw    (167.35,462.39) ;
\draw [shift={(167.35,462.39)}, rotate = 0] [color={rgb, 255:red, 0; green, 0; blue, 0 }  ][fill={rgb, 255:red, 0; green, 0; blue, 0 }  ][line width=0.75]      (0, 0) circle [x radius= 3.35, y radius= 3.35]   ;
\draw    (75.35,447.39) ;
\draw [shift={(75.35,447.39)}, rotate = 0] [color={rgb, 255:red, 0; green, 0; blue, 0 }  ][fill={rgb, 255:red, 0; green, 0; blue, 0 }  ][line width=0.75]      (0, 0) circle [x radius= 3.35, y radius= 3.35]   ;
\draw    (82.35,368.39) ;
\draw [shift={(82.35,368.39)}, rotate = 0] [color={rgb, 255:red, 0; green, 0; blue, 0 }  ][fill={rgb, 255:red, 0; green, 0; blue, 0 }  ][line width=0.75]      (0, 0) circle [x radius= 3.35, y radius= 3.35]   ;
\draw   (489.35,410.7) .. controls (489.35,395.95) and (500.69,383.99) .. (514.68,383.99) .. controls (528.66,383.99) and (540.01,395.95) .. (540.01,410.7) .. controls (540.01,425.44) and (528.66,437.4) .. (514.68,437.4) .. controls (500.69,437.4) and (489.35,425.44) .. (489.35,410.7)(451.35,410.7) .. controls (451.35,374.97) and (479.7,346) .. (514.68,346) .. controls (549.65,346) and (578,374.97) .. (578,410.7) .. controls (578,446.43) and (549.65,475.39) .. (514.68,475.39) .. controls (479.7,475.39) and (451.35,446.43) .. (451.35,410.7) ;
\draw    (532.35,390.39) ;
\draw [shift={(532.35,390.39)}, rotate = 0] [color={rgb, 255:red, 0; green, 0; blue, 0 }  ][fill={rgb, 255:red, 0; green, 0; blue, 0 }  ][line width=0.75]      (0, 0) circle [x radius= 3.35, y radius= 3.35]   ;
\draw    (533.01,429.7) ;
\draw [shift={(533.01,429.7)}, rotate = 0] [color={rgb, 255:red, 0; green, 0; blue, 0 }  ][fill={rgb, 255:red, 0; green, 0; blue, 0 }  ][line width=0.75]      (0, 0) circle [x radius= 3.35, y radius= 3.35]   ;
\draw    (493.35,424.7) ;
\draw [shift={(493.35,424.7)}, rotate = 0] [color={rgb, 255:red, 0; green, 0; blue, 0 }  ][fill={rgb, 255:red, 0; green, 0; blue, 0 }  ][line width=0.75]      (0, 0) circle [x radius= 3.35, y radius= 3.35]   ;
\draw    (514.68,346) ;
\draw [shift={(514.68,346)}, rotate = 0] [color={rgb, 255:red, 0; green, 0; blue, 0 }  ][fill={rgb, 255:red, 0; green, 0; blue, 0 }  ][line width=0.75]      (0, 0) circle [x radius= 3.35, y radius= 3.35]   ;
\draw    (578,410.7) ;
\draw [shift={(578,410.7)}, rotate = 0] [color={rgb, 255:red, 0; green, 0; blue, 0 }  ][fill={rgb, 255:red, 0; green, 0; blue, 0 }  ][line width=0.75]      (0, 0) circle [x radius= 3.35, y radius= 3.35]   ;
\draw    (451.35,410.7) ;
\draw [shift={(451.35,410.7)}, rotate = 0] [color={rgb, 255:red, 0; green, 0; blue, 0 }  ][fill={rgb, 255:red, 0; green, 0; blue, 0 }  ][line width=0.75]      (0, 0) circle [x radius= 3.35, y radius= 3.35]   ;
\draw    (498.35,390.39) ;
\draw [shift={(498.35,390.39)}, rotate = 0] [color={rgb, 255:red, 0; green, 0; blue, 0 }  ][fill={rgb, 255:red, 0; green, 0; blue, 0 }  ][line width=0.75]      (0, 0) circle [x radius= 3.35, y radius= 3.35]   ;
\draw   (68.61,203.2) .. controls (68.61,173.82) and (92.42,150) .. (121.8,150) .. controls (151.18,150) and (175,173.82) .. (175,203.2) .. controls (175,232.58) and (151.18,256.39) .. (121.8,256.39) .. controls (92.42,256.39) and (68.61,232.58) .. (68.61,203.2) -- cycle ;
\draw   (88.61,53.2) .. controls (88.61,34.31) and (103.92,19) .. (122.8,19) .. controls (141.69,19) and (157,34.31) .. (157,53.2) .. controls (157,72.08) and (141.69,87.39) .. (122.8,87.39) .. controls (103.92,87.39) and (88.61,72.08) .. (88.61,53.2) -- cycle ;
\draw    (157,53.2) -- (175,203.2) ;
\draw    (88.61,53.2) -- (68.61,203.2) ;
\draw    (89.35,160.39) ;
\draw [shift={(89.35,160.39)}, rotate = 0] [color={rgb, 255:red, 0; green, 0; blue, 0 }  ][fill={rgb, 255:red, 0; green, 0; blue, 0 }  ][line width=0.75]      (0, 0) circle [x radius= 3.35, y radius= 3.35]   ;
\draw    (154.35,161.39) ;
\draw [shift={(154.35,161.39)}, rotate = 0] [color={rgb, 255:red, 0; green, 0; blue, 0 }  ][fill={rgb, 255:red, 0; green, 0; blue, 0 }  ][line width=0.75]      (0, 0) circle [x radius= 3.35, y radius= 3.35]   ;
\draw    (156.35,243.39) ;
\draw [shift={(156.35,243.39)}, rotate = 0] [color={rgb, 255:red, 0; green, 0; blue, 0 }  ][fill={rgb, 255:red, 0; green, 0; blue, 0 }  ][line width=0.75]      (0, 0) circle [x radius= 3.35, y radius= 3.35]   ;
\draw    (73.35,225.39) ;
\draw [shift={(73.35,225.39)}, rotate = 0] [color={rgb, 255:red, 0; green, 0; blue, 0 }  ][fill={rgb, 255:red, 0; green, 0; blue, 0 }  ][line width=0.75]      (0, 0) circle [x radius= 3.35, y radius= 3.35]   ;
\draw    (122.8,19) ;
\draw [shift={(122.8,19)}, rotate = 0] [color={rgb, 255:red, 0; green, 0; blue, 0 }  ][fill={rgb, 255:red, 0; green, 0; blue, 0 }  ][line width=0.75]      (0, 0) circle [x radius= 3.35, y radius= 3.35]   ;
\draw    (157,53.2) ;
\draw [shift={(157,53.2)}, rotate = 0] [color={rgb, 255:red, 0; green, 0; blue, 0 }  ][fill={rgb, 255:red, 0; green, 0; blue, 0 }  ][line width=0.75]      (0, 0) circle [x radius= 3.35, y radius= 3.35]   ;
\draw    (88.61,53.2) ;
\draw [shift={(88.61,53.2)}, rotate = 0] [color={rgb, 255:red, 0; green, 0; blue, 0 }  ][fill={rgb, 255:red, 0; green, 0; blue, 0 }  ][line width=0.75]      (0, 0) circle [x radius= 3.35, y radius= 3.35]   ;
\draw   (569.77,78.98) .. controls (569.39,108.35) and (545.27,131.86) .. (515.89,131.48) .. controls (486.51,131.1) and (463.01,106.98) .. (463.39,77.6) .. controls (463.77,48.23) and (487.89,24.72) .. (517.27,25.1) .. controls (546.64,25.48) and (570.15,49.6) .. (569.77,78.98) -- cycle ;
\draw   (547.84,228.71) .. controls (547.59,247.59) and (532.08,262.7) .. (513.2,262.46) .. controls (494.32,262.21) and (479.2,246.71) .. (479.45,227.82) .. controls (479.69,208.94) and (495.2,193.83) .. (514.08,194.07) .. controls (532.97,194.32) and (548.08,209.82) .. (547.84,228.71) -- cycle ;
\draw    (479.45,227.82) -- (463.39,77.6) ;
\draw    (547.84,228.71) -- (569.77,78.98) ;
\draw    (532.48,256.51) ;
\draw [shift={(532.48,256.51)}, rotate = 0] [color={rgb, 255:red, 0; green, 0; blue, 0 }  ][fill={rgb, 255:red, 0; green, 0; blue, 0 }  ][line width=0.75]      (0, 0) circle [x radius= 3.35, y radius= 3.35]   ;
\draw    (488.49,204.67) ;
\draw [shift={(488.49,204.67)}, rotate = 0] [color={rgb, 255:red, 0; green, 0; blue, 0 }  ][fill={rgb, 255:red, 0; green, 0; blue, 0 }  ][line width=0.75]      (0, 0) circle [x radius= 3.35, y radius= 3.35]   ;
\draw    (517.27,25.1) ;
\draw [shift={(517.27,25.1)}, rotate = 0] [color={rgb, 255:red, 0; green, 0; blue, 0 }  ][fill={rgb, 255:red, 0; green, 0; blue, 0 }  ][line width=0.75]      (0, 0) circle [x radius= 3.35, y radius= 3.35]   ;
\draw    (536.31,202.72) ;
\draw [shift={(536.31,202.72)}, rotate = 0] [color={rgb, 255:red, 0; green, 0; blue, 0 }  ][fill={rgb, 255:red, 0; green, 0; blue, 0 }  ][line width=0.75]      (0, 0) circle [x radius= 3.35, y radius= 3.35]   ;
\draw    (463.39,77.6) ;
\draw [shift={(463.39,77.6)}, rotate = 0] [color={rgb, 255:red, 0; green, 0; blue, 0 }  ][fill={rgb, 255:red, 0; green, 0; blue, 0 }  ][line width=0.75]      (0, 0) circle [x radius= 3.35, y radius= 3.35]   ;
\draw    (569.77,78.98) ;
\draw [shift={(569.77,78.98)}, rotate = 0] [color={rgb, 255:red, 0; green, 0; blue, 0 }  ][fill={rgb, 255:red, 0; green, 0; blue, 0 }  ][line width=0.75]      (0, 0) circle [x radius= 3.35, y radius= 3.35]   ;
\draw    (485.48,248.51) ;
\draw [shift={(485.48,248.51)}, rotate = 0] [color={rgb, 255:red, 0; green, 0; blue, 0 }  ][fill={rgb, 255:red, 0; green, 0; blue, 0 }  ][line width=0.75]      (0, 0) circle [x radius= 3.35, y radius= 3.35]   ;
\draw [color={rgb, 255:red, 208; green, 2; blue, 27 }  ,draw opacity=1 ]   (75.35,447.39) .. controls (76.35,422.39) and (82.35,375.39) .. (128.68,384.99) ;
\draw [color={rgb, 255:red, 208; green, 2; blue, 27 }  ,draw opacity=1 ]   (73.35,225.39) .. controls (85.35,143.39) and (77.35,26.39) .. (122.8,19) ;
\draw [color={rgb, 255:red, 208; green, 2; blue, 27 }  ,draw opacity=1 ]   (485.48,248.51) .. controls (467.35,83.39) and (482.35,36.39) .. (517.27,25.1) ;
\draw [color={rgb, 255:red, 208; green, 2; blue, 27 }  ,draw opacity=1 ]   (493.35,424.7) .. controls (474.35,430.39) and (473.35,365.39) .. (514.68,346) ;
\draw    (124.35,337.39) -- (124.35,288.39) ;
\draw [shift={(124.35,286.39)}, rotate = 90] [color={rgb, 255:red, 0; green, 0; blue, 0 }  ][line width=0.75]    (10.93,-3.29) .. controls (6.95,-1.4) and (3.31,-0.3) .. (0,0) .. controls (3.31,0.3) and (6.95,1.4) .. (10.93,3.29)   ;
\draw [shift={(124.35,339.39)}, rotate = 270] [color={rgb, 255:red, 0; green, 0; blue, 0 }  ][line width=0.75]    (10.93,-3.29) .. controls (6.95,-1.4) and (3.31,-0.3) .. (0,0) .. controls (3.31,0.3) and (6.95,1.4) .. (10.93,3.29)   ;
\draw    (212,151.99) -- (406.35,151.4) ;
\draw [shift={(408.35,151.39)}, rotate = 179.82] [color={rgb, 255:red, 0; green, 0; blue, 0 }  ][line width=0.75]    (10.93,-3.29) .. controls (6.95,-1.4) and (3.31,-0.3) .. (0,0) .. controls (3.31,0.3) and (6.95,1.4) .. (10.93,3.29)   ;
\draw [shift={(210,152)}, rotate = 359.82] [color={rgb, 255:red, 0; green, 0; blue, 0 }  ][line width=0.75]    (10.93,-3.29) .. controls (6.95,-1.4) and (3.31,-0.3) .. (0,0) .. controls (3.31,0.3) and (6.95,1.4) .. (10.93,3.29)   ;
\draw    (528.35,329.39) -- (528.35,280.39) ;
\draw [shift={(528.35,278.39)}, rotate = 90] [color={rgb, 255:red, 0; green, 0; blue, 0 }  ][line width=0.75]    (10.93,-3.29) .. controls (6.95,-1.4) and (3.31,-0.3) .. (0,0) .. controls (3.31,0.3) and (6.95,1.4) .. (10.93,3.29)   ;
\draw [shift={(528.35,331.39)}, rotate = 270] [color={rgb, 255:red, 0; green, 0; blue, 0 }  ][line width=0.75]    (10.93,-3.29) .. controls (6.95,-1.4) and (3.31,-0.3) .. (0,0) .. controls (3.31,0.3) and (6.95,1.4) .. (10.93,3.29)   ;
\draw    (221,409.99) -- (306.35,409.73) -- (415.35,409.4) ;
\draw [shift={(417.35,409.39)}, rotate = 179.82] [color={rgb, 255:red, 0; green, 0; blue, 0 }  ][line width=0.75]    (10.93,-3.29) .. controls (6.95,-1.4) and (3.31,-0.3) .. (0,0) .. controls (3.31,0.3) and (6.95,1.4) .. (10.93,3.29)   ;
\draw [shift={(219,410)}, rotate = 359.82] [color={rgb, 255:red, 0; green, 0; blue, 0 }  ][line width=0.75]    (10.93,-3.29) .. controls (6.95,-1.4) and (3.31,-0.3) .. (0,0) .. controls (3.31,0.3) and (6.95,1.4) .. (10.93,3.29)   ;

\draw (116,430.4) node [anchor=north west][inner sep=0.75pt]    {$\dotsc $};
\draw (119,387.4) node [anchor=north west][inner sep=0.75pt]    {$i_{1}$};
\draw (136,403.4) node [anchor=north west][inner sep=0.75pt]    {$i_{2}$};
\draw (110,405.4) node [anchor=north west][inner sep=0.75pt]    {$i_{p}$};
\draw (105,479.4) node [anchor=north west][inner sep=0.75pt]    {$\dotsc $};
\draw (178,349.4) node [anchor=north west][inner sep=0.75pt]    {$j_{1}$};
\draw (169.35,465.79) node [anchor=north west][inner sep=0.75pt]    {$j_{2}$};
\draw (52.35,450.79) node [anchor=north west][inner sep=0.75pt]    {$j_{q-1}$};
\draw (64.35,346.79) node [anchor=north west][inner sep=0.75pt]    {$j_{q}$};
\draw (506,427.4) node [anchor=north west][inner sep=0.75pt]    {$\dotsc $};
\draw (508,320.4) node [anchor=north west][inner sep=0.75pt]    {$i_{1}$};
\draw (518,408.4) node [anchor=north west][inner sep=0.75pt]    {$j_{2}$};
\draw (495.35,392.79) node [anchor=north west][inner sep=0.75pt]    {$j_{q}$};
\draw (502.68,478.79) node [anchor=north west][inner sep=0.75pt]    {$\dotsc $};
\draw (516.68,387.39) node [anchor=north west][inner sep=0.75pt]    {$j_{1}$};
\draw (586.35,400.79) node [anchor=north west][inner sep=0.75pt]    {$i_{2}$};
\draw (430.35,402.79) node [anchor=north west][inner sep=0.75pt]    {$i_{p}$};
\draw (93,256.88) node [anchor=north west][inner sep=0.75pt]    {$\dotsc $};
\draw (108,74.88) node [anchor=north west][inner sep=0.75pt]    {$\dotsc $};
\draw (529.02,127.88) node [anchor=north west][inner sep=0.75pt]  [rotate=-180.74]  {$\dotsc $};
\draw (518.62,268.78) node [anchor=north west][inner sep=0.75pt]  [rotate=-180.74]  {$\dotsc $};

\end{tikzpicture}
}
\end{center}
\end{rem}

We say that two arcs $a(i_1,j_1)[\lambda_1]$ and $a(i_2,j_2)[\lambda_2]$ \textbf{intersect nontrivially} if any two curves $\gamma_1 \in a(i_{1},j_{1})[\lambda_1]$ and $\gamma_2 \in a(i_2,j_2)[\lambda_2]$ intersect in their interiors. Otherwise we say that $a(i_1,j_1)[\lambda_1]$ and $a(i_2,j_2)[\lambda_2]$ \textbf{do not intersect nontrivially}. We say an arc $a(i,j)[\lambda]$ \textbf{self-intersects} if any two curves $\gamma_1 \in a(i,j)[\lambda]$ and $\gamma_2 \in a(i,j)[\lambda]$ intersect in their interiors. Otherwise, we say $a(i,j)[\lambda]$ \textbf{does not self-intersect}. If $a(i_1,j_1)[\lambda_1]$ and $a(i_2,j_2)[\lambda_2]$ do not intersect nontrivially, we say $a(i_1,j_1)[\lambda_1]$ is \textbf{clockwise} from $a(i_2,j_2)[\lambda_2]$ (or equivalently $a(i_2,j_2)[\lambda_2]$ is \textbf{counterclockwise} from $a(i_1,j_1)[\lambda_1]$) if and only if there exists $\gamma_1 \in a(i_{1},j_{1})[\lambda_1]$ and $\gamma_2 \in a(i_2,j_2)[\lambda_2]$ that do not intersect in their interiors and such that for all shared endpoints $p$, if we place a circle of radius ${r\over 2}$ about the shared point $p$, then the circle must be traversed clockwise through the interior of the annulus to get from the point of intersection of $a(i_2,j_2)[\lambda_2]$ with the circle to that of $a(i_1,j_1)[\lambda_1]$ with the circle. Again, we say that a collection of arcs $\{a(i_1,j_1)[\lambda_1], a(i_2,j_2)[\lambda_2], \dots , a(i_k,j_k)[\lambda_k]\}$ form a \textbf{cycle} if and only if $a(i_l,j_l)[\lambda_l]$ is clockwise from $a(i_{l+1},j_{l+1})[\lambda_{l+1}]$ for all $l < k$ and $a(i_k,j_k)[\lambda_k]$ is clockwise from $a(i_1,j_1)[\lambda_1]$. We say $a(i,j)[\lambda]$ forms a \textbf{loop} if it does not self-intersect and $i = j$.

\begin{defn}
An \textbf{exceptional arc diagram} (or \textbf{fundamental arc diagram}) is a collection $n$ of arcs on $A_{Q^{\bm{\varepsilon}}}$ that satisfies the following: 
\begin{enumerate}
\item no arc self-intersects or forms a loop,
\item distinct arcs do not intersect nontrivially, and 
\item the arcs do not form any cycles. 
\end{enumerate}

\end{defn}

Recall that the universal cover of the annulus is $\pi_{Q^{\bm\varepsilon}}:\mathbb{R} \times [-1,1]\rightarrow A_{Q^{\bm\varepsilon}}$. Notice that chord diagrams lie in the universal cover of the annulus. In particular, this means that any fundamental arc diagram on an annulus $A_{Q^{\bm{\varepsilon}}}$ lifts to a chord diagram $\pi^{-1}_{Q^{\bm{\varepsilon}}}(A_{Q^{\bm{\varepsilon}}})$ on $S_{n,+} \cup S_{n,-}$. Since chords cross in the interior of a fundamental domain of the universal cover, we have that two arcs on the annulus cross if and only if there exist some lifts such that the chords cross in the universal cover. Moreover, the clockwise nature of the arcs is preserved in the universal cover since the covering map is a local homeomorphism by definition. Therefore, arcs cross if and only if their corresponding lifted chords cross, and one arc is clockwise from the other if and only if the same is true for their lifts. From this we conclude that exceptional arc diagrams lift to fundamental chord diagrams and fundamental chord diagrams project to exceptional arc diagrams. Moreover, the above association of arcs to strands gives a bijection between arcs and fundamental lifts of string modules. We can explicitly write this bijection; however, we will introduce new notation that will be used in what follows.

\begin{defn}
For $l \in \mathbb{N}$ and $i,j\in[n]$, define the string module $(i,j;l)$ to be the module associated to the walk $e_{i+1}(\alpha_{i+1} \dots \alpha_i)^l\alpha_{i+1}\dots\alpha_{j-1}e_j$. 
\end{defn}

Notice that all strings in $Q$ can be uniquely written in this form. We take the convention that when we write strings in this notation, we take $l$ to be maximal. 

\begin{rem}
Using this notation, we have the following bijection $\varphi$ between strings and arcs:

 \[ M = (i,j;l) \mapsto \begin{cases} 
          a(i \, (\text{mod } n),j\, (\text{mod } n))[-l] & \text{if $M$ preinjective} \\
          a(i\, (\text{mod } n),j\, (\text{mod } n))[l] & \text{otherwise}
       \end{cases}.
    \]

We will denote by $a(i',j')[l]$ the arc associated to the module $(i,j;l)$ under this bijection. Notice that under this bijection, bridging arcs that begin on the inner (outer) boundary component correspond to preinjective (preprojective) modules. On the other hand, exterior arcs beginning and ending on the inner (outer) boundary component correspond to right (left) regular modules. 
\end{rem}

In some sense, these arc diagrams provide a better way to view exceptional collections since two arcs cross if and only if they cross when they are drawn.  Moreover, they give a nice geometric description of parametrized families of exceptional collections as we will see in Section \ref{sec: families of exceptional collections}. 

\begin{lem}\label{lem: regulars are small}
Let $M = (i,j;l)$ be a string module. Then $M$ is exceptional if and only if $a(i',j')[l]$ does not self-intersect and does not form a loop. 
\end{lem}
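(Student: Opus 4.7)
The strategy is to reduce to the universal cover $\tilde{Q}$, which is of type $\mathbb{A}$ (so every indecomposable is exceptional there), and convert the exceptionality of $M$ into a combinatorial condition on its chord lifts. Fix a lift $X \in F^{-1}(M)$. By Remark \ref{rem: counting dimensions of hom and ext in univ cover},
$$\dim \mathrm{End}_{\Bbbk Q}(M) = \sum_{Y \in F^{-1}(M)} \dim \mathrm{Hom}_{\Bbbk \tilde Q}(X, Y), \qquad \dim \mathrm{Ext}_{\Bbbk Q}(M,M) = \sum_{Y \in F^{-1}(M)} \dim \mathrm{Ext}_{\Bbbk \tilde Q}(X, Y).$$
The $Y=X$ summand contributes $1$ to the first sum and $0$ to the second, since $X$ is exceptional inside the finite type-$\mathbb{A}$ subquiver it sits in. As $\Bbbk$ is algebraically closed, a finite-dimensional division ring is $\Bbbk$, and hence $M$ is exceptional iff $(Y, X)$ is an exceptional pair in $\Bbbk \tilde Q$ for every lift $Y \neq X$.

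For any single $Y \neq X$, the pair $X,Y$ lies in a common finite type-$\mathbb A$ subquiver of $\tilde Q$, so the type-$\mathbb A$ analog of Lemma \ref{lem: key lemma} (equivalently, the chord-diagram reformulation of Section \ref{sec: chords and annuli}) applies to their chord representatives in the universal cover: $(Y, X)$ is exceptional iff either (a) the chords of $X$ and $Y$ neither cross nor share an endpoint, or (b) the chord of $Y$ is clockwise from the chord of $X$.

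Now pass to the annulus. The chord lifts of $a(i',j')[l]$ are precisely the deck translates $\{Y_k : k \in \mathbb{Z}\}$ of $X$, shifted vertically by multiples of $n$. Because the covering projection is a local homeomorphism, two distinct lifts cross in their interiors iff the arc self-intersects on the annulus. Moreover, some $Y_k$ with $k \neq 0$ shares an endpoint with $X$ iff the chord of $X$ spans an integer multiple of $n$ between its endpoints, iff $i \equiv j \pmod n$, iff the arc is a loop. Consequently, if the arc neither self-intersects nor forms a loop, then case (a) holds for every $k \neq 0$, and $M$ is exceptional.

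For the converse, suppose the arc self-intersects: then two distinct lifts cross nontrivially, so by Lemma \ref{lem: key lemma} neither $(Y_k,X)$ nor $(X,Y_k)$ is exceptional and $M$ fails to be exceptional. Suppose instead the arc is a loop, i.e.\ $i = j$: then $Y_{1}$ and $Y_{-1}$ each share an endpoint with $X$, on opposite ends of $X$. The two local configurations at these shared endpoints (consulting Figure \ref{fig: six local clockwise configs}) are mirror images of one another, so exactly one of $Y_{1}, Y_{-1}$ is clockwise from $X$ while the other is counterclockwise; the counterclockwise shift then fails condition (b), forcing $(Y_{\pm 1}, X)$ to fail to be an exceptional pair and hence making $M$ non-exceptional. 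The main obstacle is this loop case: one must carefully compare the mirror-image local configurations at the two opposite shared endpoints and verify that they cannot simultaneously both be clockwise.
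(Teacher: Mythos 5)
Your proof is correct, and it follows a genuinely different route from the paper's. The paper argues entirely in the base category $\operatorname{rep}(Q^{\bm\varepsilon})$: it first observes that self-intersection or a loop forces $M$ to be regular, then disposes of the non-regular cases by pointing out that bridging arcs never self-intersect or loop, and handles the regular cases directly via string combinatorics — a two-sided graph map $F,S$ (quotient and submodule factorizations) when $l>0$, connectability $M\alpha_i M$ when $i=j$, and the tube-rank criterion $rl(M)<r$ (Lemma \ref{lem: hom and ext for regulars}) for the converse. Your argument instead reduces immediately to the universal cover: you fix a lift $X$, invoke Remark \ref{rem: counting dimensions of hom and ext in univ cover} to write $\dim\operatorname{End}(M)$ and $\dim\operatorname{Ext}(M,M)$ as sums over deck translates $Y_k$, and then apply the type-$\mathbb{A}$ version of the key lemma to each pair $(X,Y_k)$. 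This is essentially the same machinery the paper deploys later for Lemma \ref{lem: key lemma on annulus}, but applied here to the diagonal. What your approach buys: it is uniform across preprojective, preinjective, and regular modules, and it does not need the tube-rank criterion or explicit graph-map constructions. What the paper's approach buys: it is self-contained within the gentle-algebra toolkit and makes visible exactly which homomorphism or extension obstructs exceptionality.

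Two small points worth tightening in your write-up. First, the biconditional ``shares an endpoint iff $i\equiv j\pmod n$ iff the arc is a loop'' is slightly loose, since the paper's definition of loop also requires no self-intersection; but you handle the self-intersecting case separately, so this costs nothing. Second, for the loop case you appeal to a ``mirror image'' symmetry of the configurations at the two shared endpoints. The clean justification is that the deck transformation $T:z\mapsto z+n$ is an orientation-preserving homeomorphism with $T(Y_{-1})=X$ and $T(X)=Y_1$, so $T$ carries the local picture of $(Y_{-1},X)$ at $\tilde i$ to that of $(X,Y_1)$ at $\tilde i+n$ and preserves the clockwise relation. Hence $Y_1$ is clockwise from $X$ if and only if $X$ is clockwise from $Y_{-1}$, which is exactly the dichotomy you need; no case-by-case inspection of Figure \ref{fig: six local clockwise configs} is required. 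With that phrasing the ``main obstacle'' you flag disappears.
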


\begin{proof}
We begin by noticing that the arc $a(i',j')[l]$ self intersects or forms a loop if and only if it is an exterior arc (ie. $M$ is a regular module). 

We begin with the forward direction. If $M$ is preprojective or preinjective, then $a(i',j')[l]$ is a bridging arc, hence does not self intersect or form a loop. Whence suppose $M$ is regular. In this case, we will prove the contrapositive, so assume that $a(i',j')[l]$ either self intersects or forms a loop. If it self-intersects, we have that $l>0$. If $M$ is left-regular, so that $\tilde{\varepsilon}_{\tilde{i}} = + = \tilde{\varepsilon}_{\tilde{j}}$, we can write $M = e_{i+1}(\alpha_{i+1} \dots \alpha_i)^l\alpha_{i+1}\dots\alpha_{j-1}e_j$. Since $l \geq 1$, we have the following two factorizations of $M$: $F = (e_{i+1},\alpha_{i+1}\dots\alpha_{j-1},\alpha_{j}\dots \alpha_{j-1}e_j)$ and $S = ((\alpha_{i+1}\dots\alpha_{i})^l,\alpha_{i+1}\dots\alpha_{j-1},e_j)$. Since $M$ is left regular, we have that $\alpha_i,\alpha_j\in Q_1$. This allows us to conclude that $F$ is a quotient factorization while $S$ is a submodule factorization. Moreover, we have that this admissible pair is a two-sided graph map by definition. Thus by Theorem \ref{thm: basis for ext}, $\text{Ext}(M,M)\neq 0$ and we have that $M$ is not exceptional. Now suppose that $a(i',j')[l]$ does not self-intersect, but forms a loop. Thus $i = j$ and we can write $M = e_{i+1}\alpha_{i+1} \dots \alpha_{i-1}e_i$. We see that $M$ is connectable to itself; that is, $M\alpha_iM$ is a string in $Q$. Therefore by Theorem \ref{thm: basis for ext}, $\text{Ext}(M,M)\neq 0$ and $M$ is not exceptional. The proof when $M$ is right regular follows from duality.

Conversely, suppose that $a(i',j')[l]$ neither self-intersects nor forms a loop. If $M$ is not regular, it is well known that $M$ is exceptional. Whence suppose $M$ is regular lying in a tube of rank $r$. By assumption, we have that $i\neq j$ and $l = 0$. It follows that $rl(M) < r$, so by Lemma \ref{lem: hom and ext for regulars}, $M$ is exceptional. 
\end{proof}

We have the following reformulations of Lemma \ref{lem: key lemma} and Theorem \ref{thm: bijection with strand diagrams}: 

\begin{lem} \label{lem: key lemma on annulus}
Let $Q^{\bm{\varepsilon}}$ be a quiver of type $\tilde{\mathbb{A}}_{n-1}$ and let $U,V \in \text{ind}(\text{rep}(Q^{\bm{\varepsilon}}))$ be two exceptional string modules. 
\begin{enumerate}
\item The arcs corresponding to $U$ and $V$ intersect nontrivially or form a cycle if and only if neither $(U,V)$ nor $(V,U)$ are exceptional pairs. 
\item The arc corresponding to $U$ is clockwise from that corresponding to $V$if and only if $(U,V)$ is an exceptional pair and $(V,U)$ is not. 
\item The arcs corresponding to $U$ and $V$ do not intersect at any of their endpoints and they do not intersect nontrivially if and only if both $(U,V)$ and $(V,U)$ form exceptional pairs.  
\end{enumerate}
\end{lem}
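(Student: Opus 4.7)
The plan is to establish two local dictionaries between the arc picture on $A_{Q^{\bm\varepsilon}}$ and the hom/ext basis machinery of Theorems~\ref{thm: basis for hom} and~\ref{thm: basis for ext}, and then to read off the three cases as a partition of all configurations. By Lemma~\ref{lem: regulars are small} each exceptional string module corresponds to a non-self-intersecting non-looping arc, so the three cases of the lemma are mutually exclusive and exhaust all configurations of two such arcs on $A_{Q^{\bm\varepsilon}}$.

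First I would set up the endpoint dictionary. Given a shared endpoint $p$ of arcs $\varphi(U)$ and $\varphi(V)$, the definition of clockwise on the annulus pins down the local orientation of the two strings at the vertex $p\in Q^{\bm\varepsilon}$. Using the convention that strings travel counterclockwise around the quiver, I would verify that each clockwise configuration at $p$ corresponds to precisely one of the following: a quotient factorization of $U$ together with a submodule factorization of $V$ sharing a middle substring, or the same data with $U$ and $V$ swapped. By Theorem~\ref{thm: basis for hom} each such shared endpoint contributes a basis element of exactly one of $\mathrm{Hom}(U,V)$ or $\mathrm{Hom}(V,U)$, with the choice recorded by the clockwise direction. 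Endpoints where neither arc is clockwise from the other contribute no graph map.

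Second I would set up the crossing/cycle dictionary. For a nontrivial crossing of $\varphi(U)$ with $\varphi(V)$, I would lift the crossing to the universal cover $\pi_{Q^{\bm{\varepsilon}}}:\mathbb R\times[-1,1]\to A_{Q^{\bm\varepsilon}}$ and read off the shared substring $E$ along which the two chords coincide. Because one arc locally passes above and the other below the crossing, the induced pair of factorizations of $U$ and $V$ through $E$ is two-sided in the sense of the definition before Theorem~\ref{thm: basis for ext}, so by that theorem such a crossing contributes a basis element of $\mathrm{Ext}(U,V)$. For a cycle of arcs I would lift the cycle to the universal cover and use the endpoint dictionary at two distinct shared endpoints (obtained by unwinding around the annulus) to produce graph maps $U\to V$ and $V\to U$ simultaneously, or, if no endpoints are shared in the cycle, to produce a connection $U\alpha V$ in the quiver, again giving a nonzero extension by Theorem~\ref{thm: basis for ext}.

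With these two dictionaries, Part~(3) is immediate: no shared endpoints and no crossings mean no graph maps and no two-sided admissible pairs, so $\mathrm{Hom}(U,V)$, $\mathrm{Hom}(V,U)$, $\mathrm{Ext}(U,V)$, $\mathrm{Ext}(V,U)$ all vanish. Part~(2) then follows because a clockwise shared endpoint contributes to only one Hom direction while leaving the other, together with both Ext spaces, zero. Part~(1) follows because any crossing or cycle witnesses a nonzero extension (or simultaneously nonzero Homs in both directions), obstructing both $(U,V)$ and $(V,U)$. The main obstacle will be the cycle case in Part~(1): unlike a crossing, a cycle only provides a chain of one-sided morphisms, and showing that the chain forces either a two-sided graph map or a connection between a specific pair $U,V$ in the cycle requires careful bookkeeping of the winding numbers $\lambda$ and the alternation of clockwise relations around the cycle. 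Lifting to the universal cover and applying the endpoint dictionary twice in a fundamental domain is where I expect the shortening credited to the referee to take effect.
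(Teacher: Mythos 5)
Your strategy is genuinely different from the paper's. The paper proves this lemma by lifting $U$ and $V$ to the universal cover, fixing a fundamental lift $X$ of $U$ and considering all lifts $Y_m$ of $V$, then invoking the already-proven Type~$\mathbb A$ version of the statement (Lemma~11 of the cited reference on strand diagrams) together with the covering-theory dimension count in Remark~\ref{rem: counting dimensions of hom and ext in univ cover} to transfer each Hom/Ext statement down to $\Bbbk Q^{\bm\varepsilon}$. You instead propose to rebuild the dictionary from scratch on the annulus using the Crawley--Boevey/Schr\"oer/BDMTY graph-map and connection machinery. This is exactly the route taken by the Proposition the paper states \emph{after} Lemma~\ref{lem: key lemma on annulus} (the ``intuition'' proposition), which is proved there as a consequence of the lemma rather than as a means to it. So your plan would invert the paper's logical order and avoid the universal-cover bookkeeping over infinitely many lifts of $V$; the cost is that you must reprove the Type~$\mathbb A$ content directly.

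However, your endpoint dictionary as stated is incorrect, and this is a real gap. You claim that every clockwise shared endpoint yields an admissible pair (quotient factorization of $U$, submodule factorization of $V$ sharing a middle substring) and hence a basis element of $\mathrm{Hom}(U,V)$ via Theorem~\ref{thm: basis for hom}. This is false for the configurations in the left column of Figure~\ref{fig: 6 possible local configurations on Annulus}: there the shared endpoint records that $t(U)=s(V)$ as marked points, i.e.\ that $U$ is \emph{connectable} to $V$, which gives a basis element of $\mathrm{Ext}(U,V)$, not of $\mathrm{Hom}(U,V)$. Only the middle and right columns of that figure give non-two-sided graph maps. For Part~(2) this distinction happens not to matter (you just need one of $\mathrm{Hom}(U,V)$, $\mathrm{Ext}(U,V)$ nonzero while both vanish in the other direction), but your cycle argument for Part~(1) inherits the error and breaks. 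You posit that a cycle of two arcs either shares endpoints (producing graph maps $U\to V$ and $V\to U$ simultaneously) or doesn't (producing a single connection). In fact a cycle of two arcs forces both arcs to be exterior arcs on the same boundary component sharing \emph{two} endpoints, and at each shared endpoint the local picture is the left-column (connectable) configuration, not a graph map; the modules have disjoint support so $\mathrm{Hom}(U,V)=\mathrm{Hom}(V,U)=0$, and the obstruction is that $U$ is connectable to $V$ \emph{and} $V$ to $U$, giving nonzero extensions in both directions. Your plan as written would produce Hom maps where none exist and miss the actual extensions, so the cycle case would not close without fixing the dictionary to separate the connectable endpoint configurations from the graph-map ones.
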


\begin{proof}
Let $U$ and $V$ be two exceptional string modules with associated arcs $a(i,j)[k]$ and $a(r,s)[l]$ respectively. Fix $X$ to be to be the fundamental strand on $S_{n,\tilde{\varepsilon}}$ corresponding to the fundamental lift of $U$. Let $Y_m$ be a strand on $S_{n,\tilde{\varepsilon}}$ associated to some string module in $F^{-1}(V)$ for all $m$, where $F:\tilde{Q}^{\tilde{\bm\varepsilon}}\rightarrow Q^{\bm\varepsilon}$ is the covering functor. We first prove the forward direction.
\begin{enumerate}
\item Suppose first that the arcs $a(i,j)[k]$ and $a(r,s)[l]$ intersect nontrivially. Then there exists an $m$ such that $X$ and $Y_m$ intersect non-trivially in the universal cover. Then by Lemma 11 in [\ref{ref: combinatorics exceptional sequences type A}] either Ext$_{\Bbbk\tilde{Q}}(X,Y_i) \neq 0 \neq$ Hom$_{\Bbbk\tilde{Q}}(Y_i,X)$ or Ext$_{\Bbbk\tilde{Q}}(Y_i,X) \neq 0 \neq$ Hom$_{\Bbbk\tilde{Q}}(X,Y_i)$. In either case by definition of the covering functor $F$ along with the AR formulas as noted in Remark \ref{rem: counting dimensions of hom and ext in univ cover}, we have that neither $(U,V)$ nor $(V,U)$ form an exceptional $\Bbbk Q$ pair. Now suppose the arcs form a cycle. Then there exist lifts $Y_m$ and $Y_p$ such that $X$ is locally clockwise from $Y_m$ and locally counter clockwise from $Y_p$. By Lemma 11 in [\ref{ref: combinatorics exceptional sequences type A}], the fact that $X$ is locally clockwise from $Y_m$ implies that Hom$_{\Bbbk\tilde{Q}}(Y_m,X) \neq 0$ or Ext$_{\Bbbk\tilde{Q}}(Y_m,X) \neq 0$. Similarly, we have that Hom$_{\Bbbk\tilde{Q}}(X,Y_p) \neq 0$ or Ext$_{\Bbbk\tilde{Q}}(X,Y_p) \neq 0$. By Remark \ref{rem: counting dimensions of hom and ext in univ cover}, we have that either Hom$_{\Bbbk Q}(V,U) \neq 0$ or Ext$_{\Bbbk Q}(V,U) \neq 0$ and either Hom$_{\Bbbk Q}(U,V) \neq 0$ or Ext$_{\Bbbk Q}(U,V) \neq 0$. We conclude that neither $(U,V)$ nor $(V,U)$ form an exceptional sequence. 

\item Suppose now that $a(i,j)[k]$ is clockwise from $a(r,s)[l]$. If these arcs share one endpoint, there is exactly one lift $Y_m$ such that $X$ is clockwise of $Y_m$ and all other lifts do not interact with $X$. Thus by Lemma 11 in [\ref{ref: combinatorics exceptional sequences type A}], $(U,V)$ forms an exceptional pair and $(V,U)$ does not. If these arcs share two endpoints, then there exist precisely two lifts $Y_m$ and $Y_p$ such that $X$ is locally clockwise from $Y_m$ and locally clockwise from $Y_p$. By Lemma 11 in [\ref{ref: combinatorics exceptional sequences type A}], in either case, Hom$_{\Bbbk\tilde{Q}}(Y_{m(p)},X) \neq 0$ or $0 \neq$ Ext$_{\Bbbk\tilde{Q}}(Y_{m(p)},X)$ and Hom$_{\Bbbk\tilde{Q}}(X,Y_{m(p)}) = 0 =$ Ext$_{\Bbbk\tilde{Q}}(X,Y_{m(p)})$. Thus $(U,V)$ is exceptional and $(V,U)$ is not.

\item Suppose the arcs neither intersect nor trivially intersect. Then for all $Y_m \in F^{-1}(V)$, the strands $X$ and $Y_m$ neither intersect, nor trivially intersect. Thus by Lemma 11 in [\ref{ref: combinatorics exceptional sequences type A}], for all $m$, Hom$_{\Bbbk\tilde{Q}}(X,Y_m) =$ Ext$_{\Bbbk\tilde{Q}}(X,Y_m) =$ Hom$_{\Bbbk\tilde{Q}}(Y_m,X) =$ Ext$_{\Bbbk\tilde{Q}}(X,Y_m) = 0$. We conclude that both $(U,V)$ and $(V,U)$ form exceptional pairs.
\end{enumerate}
We prove the reverse direction by contrapositive.
\begin{enumerate}
\item Suppose the arcs $a(i,j)[k]$ and $a(r,s)[l]$ do not intersect nontrivially and do not form a cycle. Then we have the following cases.

\begin{enumerate}
\item Suppose that $a(i,j)[k]$ is clockwise $a(r,s)[l]$. Then by part 2 of the forward direction, $(U,V)$ is exceptional and $(V,U)$ is not. Similarly, if the arc $a(r,s)[l]$ is clockwise of $a(r,s)[l]$, then $(V,U)$ is exceptional and $(U,V)$ is not.
\item Suppose the arcs $a(i,j)[k]$ and $a(r,s)[l]$ neither intersect nor intersect trivially. Then by part 3 of the forward direction, both $(U,V)$ and $(V,U)$ are exceptional.
\end{enumerate}
In either case, we have shown that either $(U,V)$ or $(V,U)$, or both are exceptional.

\item Suppose the arc $a(i,j)[k]$ is not clockwise from $a(r,s)[l]$. We have the following cases:

\begin{enumerate}
\item Suppose $a(i,j)[k]$ is counterclockwise from $a(r,s)[l]$. Then by part 2 of the forward direction, $(V,U)$ is an exceptional pair and $(U,V)$ is not.
\item Suppose the arcs cross. Then by part 1 of the forward direction, neither $(U,V)$ nor $(V,U)$ form an exceptional pair.
\item Suppose the arcs neither intersect nor trivially intersect. Then by part 3 of the forward direction, both $(U,V)$ and $(V,U)$ form exceptional pairs.
\end{enumerate}
In all cases, we have shown that either $(U,V)$ is not exceptional and $(V,U)$ is exceptional, $(U,V)$ and $(V,U)$ are both exceptional, or $(U,V)$ and $(V,U)$ are both not exceptional.

\item Suppose the arcs either intersect or trivially intersect. If they intersect or form a cycle, then by part 1 of the forward direction, neither $(U,V)$ nor $(V,U)$ form an exceptional pair. If $a(i,j)[k]$ is clockwise from $a(r,s)[l]$, then by part 2 of the forward direction, $(U,V)$ is exceptional and $(V,U)$ is not. Similarly, if $a(r,s)[l]$ is clockwise from $a(i,j)[k]$, then by part 2 of the forward direction, $(V,U)$ is exceptional and $(U,V)$ is not. In any case, either $(U,V)$, or $(V,U)$, or both pairs are not exceptional.
\end{enumerate}
\end{proof}

The proof of the next theorem is very similar to that of Theorem 12 in [\ref{ref: combinatorics exceptional sequences type A}].

\begin{thm}\label{thm: ec bijection with arc diagrams}
Let $\bar{E}_{\tilde{\varepsilon}} := \{ \text{exceptional collections of } Q^{\bm{\varepsilon}}\}$ where $Q^{\bm{\varepsilon}}$ is a quiver of type $\tilde{\mathbb{A}}_{n-1}$. There is a bijection between the set of complete exceptional collections of $\Bbbk Q^{\bm{\varepsilon}}$-modules and fundamental arc diagrams on $A_{Q^{\bm{\varepsilon}}}$.
\end{thm}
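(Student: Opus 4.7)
The plan is to show that the bijection $\varphi$ from the previous remark extends to a bijection $\bar{\varphi}$ between $\bar{E}_{\tilde{\varepsilon}}$ and fundamental arc diagrams by sending $\{V_1,\dots,V_n\}$ to $\{\varphi(V_1),\dots,\varphi(V_n)\}$. Since $|Q_0| = n$, both sides consist of sets of size $n$, so I only need to verify well-definedness, injectivity, and surjectivity. Injectivity is immediate from the injectivity of $\varphi$ on individual string modules, so the content is in well-definedness and surjectivity.

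For well-definedness, suppose $\{V_1,\dots,V_n\}$ is an exceptional collection. Each $V_i$ is exceptional, so by Lemma \ref{lem: regulars are small} the arc $\varphi(V_i)$ neither self-intersects nor forms a loop. For distinct $V_i,V_j$, some ordering of the pair is exceptional (since the collection can be ordered into an exceptional sequence), so by Lemma \ref{lem: key lemma on annulus}(1) their arcs do not intersect nontrivially. To rule out cycles, suppose the arcs $\varphi(V_{l_1}),\dots,\varphi(V_{l_k})$ form a cycle, so each $\varphi(V_{l_p})$ is locally clockwise from $\varphi(V_{l_{p+1}})$ (indices mod $k$). Then by Lemma \ref{lem: key lemma on annulus}(2), $(V_{l_p},V_{l_{p+1}})$ is an exceptional pair but $(V_{l_{p+1}},V_{l_p})$ is not. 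In any ordering of $V_1,\ldots,V_n$ into an exceptional sequence, the index of $V_{l_p}$ must therefore be strictly smaller than that of $V_{l_{p+1}}$ for each $p$, forcing $\mathrm{ind}(V_{l_1}) < \mathrm{ind}(V_{l_2}) < \cdots < \mathrm{ind}(V_{l_k}) < \mathrm{ind}(V_{l_1})$, a contradiction.

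For surjectivity, fix a fundamental arc diagram $D = \{a_1,\dots,a_n\}$ and set $V_i := \varphi^{-1}(a_i)$. Each $V_i$ is exceptional by Lemma \ref{lem: regulars are small}. Between any two distinct $V_i$ and $V_j$, the arcs do not intersect nontrivially, so Lemma \ref{lem: key lemma on annulus}(2) and (3) tell us that at least one of $(V_i,V_j)$ and $(V_j,V_i)$ is an exceptional pair. It remains to linearly order $\{V_1,\dots,V_n\}$ into an exceptional sequence. Define a binary relation $V_j \prec V_i$ whenever $\varphi(V_j)$ is clockwise from $\varphi(V_i)$; this relation is acyclic precisely because $D$ contains no cycle of arcs. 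A topological sort extends $\prec$ to a total order $V_{\sigma(1)},\dots,V_{\sigma(n)}$. For $j<i$, either $\varphi(V_{\sigma(j)})$ is clockwise from $\varphi(V_{\sigma(i)})$ (so $(V_{\sigma(j)},V_{\sigma(i)})$ is exceptional by Lemma \ref{lem: key lemma on annulus}(2)), or the arcs share no endpoint and both orderings are exceptional by part (3). In either case $\mathrm{Hom}(V_{\sigma(i)},V_{\sigma(j)})=0=\mathrm{Ext}(V_{\sigma(i)},V_{\sigma(j)})$, yielding a complete exceptional sequence whose underlying set maps to $D$ under $\bar{\varphi}$.

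The main obstacle is the cycle/acyclicity step, which links the topological condition ``no cycle of arcs'' to the algebraic condition ``the clockwise relation admits a linear extension.'' Everything else is essentially a transcription of Lemmas \ref{lem: regulars are small} and \ref{lem: key lemma on annulus}; the only nontrivial combinatorial input is verifying that Lemma \ref{lem: key lemma on annulus}(2) genuinely forces the indices along a would-be arc cycle to strictly increase, which is what rules out cycles in the image and, dually, allows the topological sort to produce an exceptional sequence in the preimage.
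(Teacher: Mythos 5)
Your proposal is correct and follows essentially the same route as the paper: you invoke Lemma \ref{lem: regulars are small} and Lemma \ref{lem: key lemma on annulus} to show that arcs coming from an exceptional collection avoid self-intersections, nontrivial crossings, and cycles, ruling out cycles by the same index-comparison contradiction, and you build the exceptional sequence from an arc diagram by linearly extending the clockwise relation. Your topological-sort phrasing is a slightly cleaner packaging of the paper's inductive choice of a ``minimal'' arc, and your explicit remark on injectivity fills in a step the paper leaves implicit, but the substance is the same.
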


\begin{proof}
Let $\xi = \{(i_1,j_1;l_1),\dots (i_n,j_n;l_n)\}$ be an exceptional collection of $Q^{\bm{\varepsilon}}$ modules and consider the arc diagram $\varphi(\xi) = D$ consisting of the arcs $\{a(i'_1,j'_1)[l_1],\dots a(i'_n,j'_n)[l_n]\}$ on the annulus $A_{Q^{\bm{\varepsilon}}}$. Since $\xi$ is exceptional, by Lemma \ref{lem: regulars are small}, we have that no arc self-intersects or forms a loop. Moreover, by 1. of Lemma \ref{lem: key lemma on annulus}, we see that no two arcs cross or form a cycle. We will now show that $D$ does not contain a cycle by contradiction. Suppose the arcs $a(i'_s,j'_s)[l_s],\dots a(i'_p,j'_p)[l_p]$ form a cycle. Then we have that for all $x\in[p]$, the arc $a(i'_x,j'_x)[\lambda_x]$ is clockwise from the arc $a(i'_{x+1},j'_{x+1})[\lambda_{x+1}]$. By 2. of Lemma \ref{lem: key lemma on annulus}, this implies that for all $x\in[p]$, $(i_x,j_x;l_x)$ proceeds $(i_{x+1},j_{x+1};l_{x+1})$ in any exceptional sequence corresponding to $\xi$. But this is impossible, contradicting the exceptionality of $\xi$.

We will now show that any exceptional arc diagrams yields an exceptional sequence, so suppose $D$ is an exceptional arc diagram. Since no arc self-intersects or forms a loop and moreover, no two arcs cross or form a cycle, it follows that either $(\varphi^{-1}(a(i'_s,j'_s)[\lambda_s]),\varphi^{-1}(a(i'_x,j'_x)[\lambda_x]))$ or $(\varphi^{-1}(a(i'_x,j'_x)[\lambda_x]),\varphi^{-1}(a(i'_s,j'_s)[\lambda_s]))$ forms an exceptional pair for all $s\neq x$. Now, there exists an arc $a(i'_{s_1},j'_{s_1})[\lambda_{s_1}] \in D$ such that $(\varphi^{-1}(a(i'_{s_1},j'_{s_1})[\lambda_{s_1}]),\varphi^{-1}(a(i'_s,j'_s)[\lambda_s]))$ is an exceptional pair for all $a(i'_s,j'_s)[\lambda_s]\in D - \{a(i'_{s_1},j'_{s_1})[\lambda_{s_1}]\}$; for if not, $D$ would contain a cycle. By continuing to choose arcs in this way inductively, we obtain an exceptional sequence as desired.
\end{proof}

At this point, we will provide some intuition behind the results of Lemma \ref{lem: key lemma on annulus}.

\begin{prop} Let $Q^{\bm{\varepsilon}}$ be a quiver of type $\tilde{\mathbb{A}}_{n-1}$, let $U,V \in \text{ind}(\text{rep}(Q^{\bm{\varepsilon}}))$ be two exceptional string modules, and let $a_1$ and $a_2$ be their corresponding arcs on $A_{Q^{\bm\varepsilon}}$.
\begin{enumerate}

\item The arcs $a_1$ and $a_2$ do not intersect at any of their endpoints and they do not intersect nontrivially if and only if there are no extensions or morphisms between $U$ and $V$.

\item The arc $a_1$ is clockwise from $a_2$ if and only if Hom$(U,V)$ contains only non two-sided graph maps and Ext$(U,V) = \text{Hom}(V,U) = \text{Ext}(V,U) = 0$, or $U$ and $V$ are connectable and Hom$(U,V) =\text{Hom}(V,U) = \text{Ext}(V,U) = 0$.

\item The arcs $a_1$ and $a_2$ form a cycle if and only if $U$ is connectable to $V$, $V$ is connectable to $U$, and Hom$(U,V) = 0 = \text{Hom}(V,U)$. 

\item The arcs $a_1$ and $a_2$ intersect nontrivially if and only if there is a two-sided graph map between $U$ and $V$.

\end{enumerate}
\end{prop}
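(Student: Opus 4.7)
The plan is to refine Lemma \ref{lem: key lemma on annulus} by identifying, in each topological configuration, which specific basis elements from Theorems \ref{thm: basis for hom} and \ref{thm: basis for ext} witness the non-vanishing of Hom and Ext. Lemma \ref{lem: key lemma on annulus} already records exactly which of Hom$(U,V)$, Hom$(V,U)$, Ext$(U,V)$, Ext$(V,U)$ vanish in each case; what remains is to identify the kind of morphism or extension responsible.

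The first step is to pass to the universal cover by lifting arcs through $\pi_{Q^{\bm{\varepsilon}}}^{-1}$, reducing each interaction between $a_1$ and $a_2$ on the annulus to the interaction of a pair of lifted strands in $\tilde{Q}^{\tilde{\bm{\varepsilon}}}$. The key technical observation, which I would establish before attacking the four cases, is a dictionary between the local geometry of two strands and the factorization data of Section 2.3. Namely, two strands cross in their interiors if and only if they share a common substring $E$ with extensions on both sides giving opposite orientations in each string, which is precisely a two-sided admissible pair. Two strands sharing an endpoint but not crossing correspond either to a non two-sided graph map (a shared substring extending in only one direction in each string) or to a connection via an arrow at that shared vertex; one inspects the six local clockwise configurations in Figure \ref{fig: six local clockwise configs} to verify these exhaust the possibilities.

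Given this dictionary, part (4) is immediate: a nontrivial intersection of arcs lifts to a strand crossing, which is a two-sided graph map, and conversely. Part (3) is the next case to handle: a cycle of two arcs without crossings forces both shared endpoints into a clockwise configuration, and the dictionary identifies each such shared endpoint with a connection arrow in the appropriate direction; absence of a crossing rules out any two-sided graph map, so by Theorem \ref{thm: basis for hom} both Hom groups vanish, and by Theorem \ref{thm: basis for ext} both Ext groups are spanned by the connection arrows, which is exactly mutual connectability.

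For part (2), a clockwise configuration without a crossing means the arcs share at most one endpoint in clockwise position. I would split into two sub-cases: first, no shared endpoints, in which the only morphism comes from a shared interior substring extending only on one side in each string, yielding a non two-sided graph map in Hom$(U,V)$; second, one shared endpoint in clockwise position, which the dictionary converts to a connection of $U$ to $V$. Part (1) is then immediate, as no shared endpoints and no crossing means no shared substring and no connection, forcing all four groups to vanish by Theorems \ref{thm: basis for hom} and \ref{thm: basis for ext}. The main obstacle will be establishing the dictionary carefully in step one, particularly verifying that every clockwise shared-endpoint configuration is either a one-sided graph-map overlap or a connection arrow (not both simultaneously), which requires a careful examination of Figure \ref{fig: six local clockwise configs} together with the values of $\tilde{\varepsilon}$ at the shared vertex.
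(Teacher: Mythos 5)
Your plan follows broadly the same path the paper takes: determine, for each geometric configuration, which basis elements from Theorems \ref{thm: basis for hom} and \ref{thm: basis for ext} account for the non-vanishing already recorded in Lemma \ref{lem: key lemma on annulus}. There are, however, two concrete gaps in the execution.

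In part (3), the inference ``absence of a crossing rules out any two-sided graph map, so by Theorem \ref{thm: basis for hom} both Hom groups vanish'' does not follow. Theorem \ref{thm: basis for hom} says that \emph{all} graph maps, one-sided included, form a basis for Hom; ruling out only the two-sided ones is not enough to kill the Hom group. The paper's proof closes this by observing that a 2-cycle consists of two exterior arcs on the same boundary component sharing both endpoints, so the corresponding string modules have disjoint support. With disjoint support there is no common substring $E$ at all, hence no admissible pair of any kind (one-sided or two-sided), and only then does Theorem \ref{thm: basis for hom} give Hom$(U,V) = 0 = $ Hom$(V,U)$. Your argument needs this additional observation.

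In part (2), the first sub-case --- ``no shared endpoints, in which the only morphism comes from a shared interior substring'' --- is vacuous. If the arcs neither share an endpoint nor cross, Lemma \ref{lem: key lemma on annulus}(3) forces both $(U,V)$ and $(V,U)$ to be exceptional, placing you in part (1) rather than part (2). A clockwise configuration in the sense of part (2) always involves a shared endpoint, and the correct split is between the two local shapes such a shared endpoint can take --- either a connection arrow (left column of Figure \ref{fig: 6 possible local configurations on Annulus}) or a non two-sided graph map (middle and right columns) --- as determined by the values of $\tilde\varepsilon$, which is exactly the casework you flag as requiring care at the end of your proposal.
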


\begin{proof}
\begin{enumerate}
\item This follows immediately from Lemma \ref{lem: key lemma on annulus}.

\item In this case, there are 6 possible local configurations for $a_1$ and $a_2$, as seen in Figure \ref{fig: 6 possible local configurations on Annulus}. We begin with the annulus on the top left of Figure \ref{fig: 6 possible local configurations on Annulus}. Suppose the arcs locally lie in this configuration, and suppose $t(U) = i = s(V)$. Then we have that $U\alpha_iV$ is a string in $Q$, hence $U$ is connectable to $V$. Since the arcs don't cross or form a cycle, we have that $(U,V)$ is exceptional and $(V,U)$ is not by Lemma \ref{lem: key lemma on annulus}. Thus Hom$(V,U) = 0 = \text{Ext}(V,U)$. If $U$ is preinjective, then $V$ must be regular or preprojective for the arcs to attain this configuration, in which case Hom$(U,V) = 0$. If $U$ is regular, then $V$ is either regular or preprojective. If $V$ is preprojective, then Hom$(U,V) = 0$. If $V$ is regular, then since the arcs don't cross or form a cycle, the modules don't share support. Thus Hom$(U,V) = 0$ in all cases. The case in which the arcs appear as in the bottom left annulus is analogous. 

Conversely, suppose $U \alpha V$ is a string in $Q$, there is no Hom between $U$ and $V$, and Ext$(V,U)=0$. Then $t(U) = s(V)$, so the corresponding arcs must locally appear in one of the two configurations on the left hand side of Figure \ref{fig: 6 possible local configurations on Annulus}. Moreover, we have that $(U,V)$ is exceptional and $(V,U)$ is not. By Lemma \ref{lem: key lemma on annulus}, we conclude that the arc $a_1$ is clockwise of $a_2$.\\

\begin{figure}
\begin{center}

\tikzset{every picture/.style={line width=0.75pt}} 

\begin{tikzpicture}[x=0.75pt,y=0.75pt,yscale=-1,xscale=1]

\draw   (90.54,86.29) .. controls (90.54,75.05) and (100.43,65.93) .. (112.63,65.93) .. controls (124.83,65.93) and (134.72,75.05) .. (134.72,86.29) .. controls (134.72,97.54) and (124.83,106.65) .. (112.63,106.65) .. controls (100.43,106.65) and (90.54,97.54) .. (90.54,86.29)(60,86.29) .. controls (60,58.18) and (83.56,35.39) .. (112.63,35.39) .. controls (141.69,35.39) and (165.26,58.18) .. (165.26,86.29) .. controls (165.26,114.4) and (141.69,137.19) .. (112.63,137.19) .. controls (83.56,137.19) and (60,114.4) .. (60,86.29) ;
\draw    (165.26,86.29) ;
\draw [shift={(165.26,86.29)}, rotate = 0] [color={rgb, 255:red, 0; green, 0; blue, 0 }  ][fill={rgb, 255:red, 0; green, 0; blue, 0 }  ][line width=0.75]      (0, 0) circle [x radius= 3.35, y radius= 3.35]   ;
\draw   (93.01,228.49) .. controls (93.01,217.25) and (102.9,208.13) .. (115.1,208.13) .. controls (127.3,208.13) and (137.19,217.25) .. (137.19,228.49) .. controls (137.19,239.74) and (127.3,248.85) .. (115.1,248.85) .. controls (102.9,248.85) and (93.01,239.74) .. (93.01,228.49)(62.47,228.49) .. controls (62.47,200.38) and (86.03,177.6) .. (115.1,177.6) .. controls (144.16,177.6) and (167.72,200.38) .. (167.72,228.49) .. controls (167.72,256.6) and (144.16,279.39) .. (115.1,279.39) .. controls (86.03,279.39) and (62.47,256.6) .. (62.47,228.49) ;
\draw    (115.1,208.13) ;
\draw [shift={(115.1,208.13)}, rotate = 0] [color={rgb, 255:red, 0; green, 0; blue, 0 }  ][fill={rgb, 255:red, 0; green, 0; blue, 0 }  ][line width=0.75]      (0, 0) circle [x radius= 3.35, y radius= 3.35]   ;
\draw    (150.46,66.58) -- (165.26,86.29) ;
\draw    (165.26,86.29) -- (150.46,102.75) ;
\draw    (94.96,201.3) -- (115.1,208.13) ;
\draw    (115.1,208.13) -- (134.43,201.3) ;
\draw   (258.26,86.29) .. controls (258.26,75.05) and (268.15,65.93) .. (280.35,65.93) .. controls (292.55,65.93) and (302.44,75.05) .. (302.44,86.29) .. controls (302.44,97.54) and (292.55,106.65) .. (280.35,106.65) .. controls (268.15,106.65) and (258.26,97.54) .. (258.26,86.29)(227.72,86.29) .. controls (227.72,58.18) and (251.28,35.39) .. (280.35,35.39) .. controls (309.42,35.39) and (332.98,58.18) .. (332.98,86.29) .. controls (332.98,114.4) and (309.42,137.19) .. (280.35,137.19) .. controls (251.28,137.19) and (227.72,114.4) .. (227.72,86.29) ;
\draw    (332.98,86.29) ;
\draw [shift={(332.98,86.29)}, rotate = 0] [color={rgb, 255:red, 0; green, 0; blue, 0 }  ][fill={rgb, 255:red, 0; green, 0; blue, 0 }  ][line width=0.75]      (0, 0) circle [x radius= 3.35, y radius= 3.35]   ;
\draw   (260.73,228.49) .. controls (260.73,217.25) and (270.62,208.13) .. (282.82,208.13) .. controls (295.02,208.13) and (304.91,217.25) .. (304.91,228.49) .. controls (304.91,239.74) and (295.02,248.85) .. (282.82,248.85) .. controls (270.62,248.85) and (260.73,239.74) .. (260.73,228.49)(230.19,228.49) .. controls (230.19,200.38) and (253.75,177.6) .. (282.82,177.6) .. controls (311.88,177.6) and (335.44,200.38) .. (335.44,228.49) .. controls (335.44,256.6) and (311.88,279.39) .. (282.82,279.39) .. controls (253.75,279.39) and (230.19,256.6) .. (230.19,228.49) ;
\draw    (282.82,208.13) ;
\draw [shift={(282.82,208.13)}, rotate = 0] [color={rgb, 255:red, 0; green, 0; blue, 0 }  ][fill={rgb, 255:red, 0; green, 0; blue, 0 }  ][line width=0.75]      (0, 0) circle [x radius= 3.35, y radius= 3.35]   ;
\draw   (409.95,86.29) .. controls (409.95,75.05) and (419.84,65.93) .. (432.04,65.93) .. controls (444.24,65.93) and (454.13,75.05) .. (454.13,86.29) .. controls (454.13,97.54) and (444.24,106.65) .. (432.04,106.65) .. controls (419.84,106.65) and (409.95,97.54) .. (409.95,86.29)(379.41,86.29) .. controls (379.41,58.18) and (402.97,35.39) .. (432.04,35.39) .. controls (461.1,35.39) and (484.67,58.18) .. (484.67,86.29) .. controls (484.67,114.4) and (461.1,137.19) .. (432.04,137.19) .. controls (402.97,137.19) and (379.41,114.4) .. (379.41,86.29) ;
\draw    (484.67,86.29) ;
\draw [shift={(484.67,86.29)}, rotate = 0] [color={rgb, 255:red, 0; green, 0; blue, 0 }  ][fill={rgb, 255:red, 0; green, 0; blue, 0 }  ][line width=0.75]      (0, 0) circle [x radius= 3.35, y radius= 3.35]   ;
\draw   (412.41,228.49) .. controls (412.41,217.25) and (422.3,208.13) .. (434.5,208.13) .. controls (446.7,208.13) and (456.59,217.25) .. (456.59,228.49) .. controls (456.59,239.74) and (446.7,248.85) .. (434.5,248.85) .. controls (422.3,248.85) and (412.41,239.74) .. (412.41,228.49)(381.88,228.49) .. controls (381.88,200.38) and (405.44,177.6) .. (434.5,177.6) .. controls (463.57,177.6) and (487.13,200.38) .. (487.13,228.49) .. controls (487.13,256.6) and (463.57,279.39) .. (434.5,279.39) .. controls (405.44,279.39) and (381.88,256.6) .. (381.88,228.49) ;
\draw    (434.5,208.13) ;
\draw [shift={(434.5,208.13)}, rotate = 0] [color={rgb, 255:red, 0; green, 0; blue, 0 }  ][fill={rgb, 255:red, 0; green, 0; blue, 0 }  ][line width=0.75]      (0, 0) circle [x radius= 3.35, y radius= 3.35]   ;
\draw    (282.42,127.7) .. controls (316.95,121.46) and (320.65,108.99) .. (332.98,86.29) ;
\draw    (291.05,115.23) .. controls (314.48,107.74) and (315.71,105.25) .. (332.98,86.29) ;
\draw    (282.82,208.13) .. controls (295.98,198.8) and (307.08,206.29) .. (312.01,223.75) ;
\draw    (282.82,208.13) .. controls (287.35,191.32) and (316.95,198.8) .. (321.88,216.27) ;
\draw    (448.9,49.11) .. controls (469.87,52.86) and (479.73,68.83) .. (484.67,86.29) ;
\draw    (453.84,64.08) .. controls (472.33,65.33) and (472.33,74.06) .. (484.67,86.29) ;
\draw    (409.44,193.81) .. controls (427.94,195.06) and (430.4,198.8) .. (434.5,208.13) ;
\draw    (399.57,215.02) .. controls (418.07,202.54) and (422.17,195.91) .. (434.5,208.13) ;

\draw (136,50.4) node [anchor=north west][inner sep=0.75pt]    {$a_{1}$};
\draw (137,98.4) node [anchor=north west][inner sep=0.75pt]    {$a_{2}$};
\draw (130,198.4) node [anchor=north west][inner sep=0.75pt]    {$a_{1}$};
\draw (305,83.4) node [anchor=north west][inner sep=0.75pt]    {$a_{1}$};
\draw (307,220.4) node [anchor=north west][inner sep=0.75pt]    {$a_{1}$};
\draw (434.04,38.79) node [anchor=north west][inner sep=0.75pt]    {$a_{1}$};
\draw (415,182.4) node [anchor=north west][inner sep=0.75pt]    {$a_{1}$};
\draw (82,197.4) node [anchor=north west][inner sep=0.75pt]    {$a_{2}$};
\draw (268,118.4) node [anchor=north west][inner sep=0.75pt]    {$a_{2}$};
\draw (291,184.4) node [anchor=north west][inner sep=0.75pt]    {$a_{2}$};
\draw (455.84,67.48) node [anchor=north west][inner sep=0.75pt]    {$a_{2}$};
\draw (398,211.4) node [anchor=north west][inner sep=0.75pt]    {$a_{2}$};

\end{tikzpicture}

\end{center}
\caption{The six possible local configurations of an arc $a_1$ being clockwise from another arc $a_2$ as in Figure \ref{fig: six local clockwise configs}.}
\label{fig: 6 possible local configurations on Annulus}
\end{figure}

Now suppose the arcs locally appear as the middle or right column of Figure \ref{fig: 6 possible local configurations on Annulus}. Then by Lemma \ref{lem: key lemma on annulus}, $(U,V)$ is exceptional and $(V,U)$ is not. Thus there is a nontrivial morphism from $U$ to $V$, or a nontrivial extension of $U$ by $V$, or both. Since $(U,V)$ is exceptional, Hom$(V,U) = 0 = \text{Ext}(V,U)$. From this we conclude that there is not a two-sided graph map between $U$ and $V$. Thus if there is an extension of $U$ by $V$, we must have that $U$ is connectable to $V$. By the previous paragraph, this allows us to conclude that the arcs appear on the other boundary circle as in the left hand side of Figure \ref{fig: 6 possible local configurations on Annulus}, which is not possible since each arc has only one start and end point. Thus we conclude there is a graph map between $U$ and $V$ that is not two-sided. 

Conversely, suppose Hom$(U,V)$ contains only non two-sided graph maps and Ext$(U,V) = \text{Hom}(V,U) = \text{Ext}(V,U) = 0$. Thus $(U,V)$ is exceptional and $(V,U)$ is not. By Lemma \ref{lem: key lemma on annulus}, the arc $U$ must be clockwise of $V$, and by the first paragraph, they must lie as in the middle or right column of Figure \ref{fig: 6 possible local configurations on Annulus}. \\

\item Suppose the arcs $U$ and $V$ form a cycle. Then they must both be exterior arcs on the same boundary component and share two endpoints. Moreover, at each shared endpoint, the arcs must appear as in the left hand side of Figure \ref{fig: 6 possible local configurations on Annulus}. From this we conclude that $U$ is connectable to $V$ and $V$ is connectable to $U$. Since the corresponding modules have disjoint support, it follows that there is no Hom between them.

Conversely, suppose that $U$ is connectable to $V$, $V$ is connectable to $U$, and Hom$(U,V) = 0 = \text{Hom}(V,U)$. Since $U$ is connectable to $V$ and $V$ is connectable to $U$, the arcs must locally appear at their shared endpoints at in the left hand side of Figure \ref{fig: 6 possible local configurations on Annulus}. Moreover, since there is no Hom, we conclude that the arcs don't cross, and thus form a cycle.

\item Suppose there is a two-sided graph map from $U$ to $V$. Then Hom$(U,V) \neq 0 \neq \text{Ext}(V,U)$. By Lemma \ref{lem: key lemma on annulus}, we have that either the arcs corresponding to $U$ and $V$ form a cycle, or cross. Since there is Hom between them, they can't form a cycle by part 3, so the arcs must cross.

Conversely, suppose that the arcs corresponding to $U$ and $V$ cross. Since the arcs cross, we may without loss of generality assume that Hom$(U,V) \neq \text{Ext}(V,U)$. If the extension comes from a two-sided graph map, we are done, so suppose that the extension does not come from a two-sided graph map. Then $V$ is connectable to $U$, and there is moreover a non two-sided graph map from $U$ to $V$. Thus on one boundary component, the arcs must appear locally as in the middle or right of Figure \ref{fig: 6 possible local configurations on Annulus} where on the other boundary component, they must appear as on the left of \ref{fig: 6 possible local configurations on Annulus}, which is impossible. We conclude that there must be a two-sided graph map between $U$ and $V$.

\end{enumerate}
\end{proof}

\section{Families of Exceptional Collections}\label{sec: families of exceptional collections}

\indent

It is well known that there are infinitely many exceptional collections of type $\tilde{\mathbb{A}}$. In this section we will classify them into families and show that there are finitely many such families. Recall that $Q^{\bm{\varepsilon}}$ is a quiver of type $\tilde{\mathbb{A}}_{n-1}$ and $A_{Q^{\bm\varepsilon}}$ is the associated annulus. 

\subsection{A Geometric Description of Parameterized Families}

\indent

In this subsection, we will define parametrized families of exceptional collections in terms of arc diagrams. We begin by defining what we mean by Dehn twists of the annulus, then we will use these twists to define the families.

A \textbf{$2\pi$ clockwise Dehn twist} of the inner circle of the annulus $A_{Q^{\bm\varepsilon}}$ is a homeomorphism between arc diagrams $D_1 \rightarrow D_2$ defined on arcs as follows:
 
\[ M = a(i,j)[l] \mapsto \begin{cases} 
          a(i,j)[l] & \text{if $a(i,j)[l]$ is an exterior arc (ie $M$ is a regular module)} \\
          a(j,i)[0] & \text{if $a(i,j)[l]$ is a bridging arc with $M$ injective and $l =0$} \\
          a(i,j)[l+1] & \text{otherwise}		  
       \end{cases}.
    \]

A \textbf{$2\pi$ counter clockwise Dehn twist} of the inner circle of the annulus is the inverse of a $2\pi$ clockwise Dehn twist. An example of these twists can be seen in Examples \ref{exmp: dehn twist big ec} and \ref{exam: tranjective component}.

\begin{defn}\label{defn: equivalent arc diagrams}
Let $\xi_1$ and $\xi_2$ be two exceptional collections and $D_1$ and $D_2$ be their corresponding exceptional arc diagrams. We say the exceptional collections are \textbf{equivalent}, denoted $\xi_1 \sim \xi_2$, if $D_2$ is attained from $D_1$ by a sequence of $2\pi$ Dehn twists of the inner circle of the annulus.  
\end{defn}

\begin{exmp}\label{exmp: dehn twist big ec}

We have on the left the corresponding exceptional arc diagram from Example \ref{exmp: fundamental chord diagram} and on the right, a clockwise $2\pi$ Dehn twist of the inner boundary circle corresponding to another exceptional arc diagram in same equivalence class.

\begin{center}

\includegraphics[height = 5cm, width = 12cm]{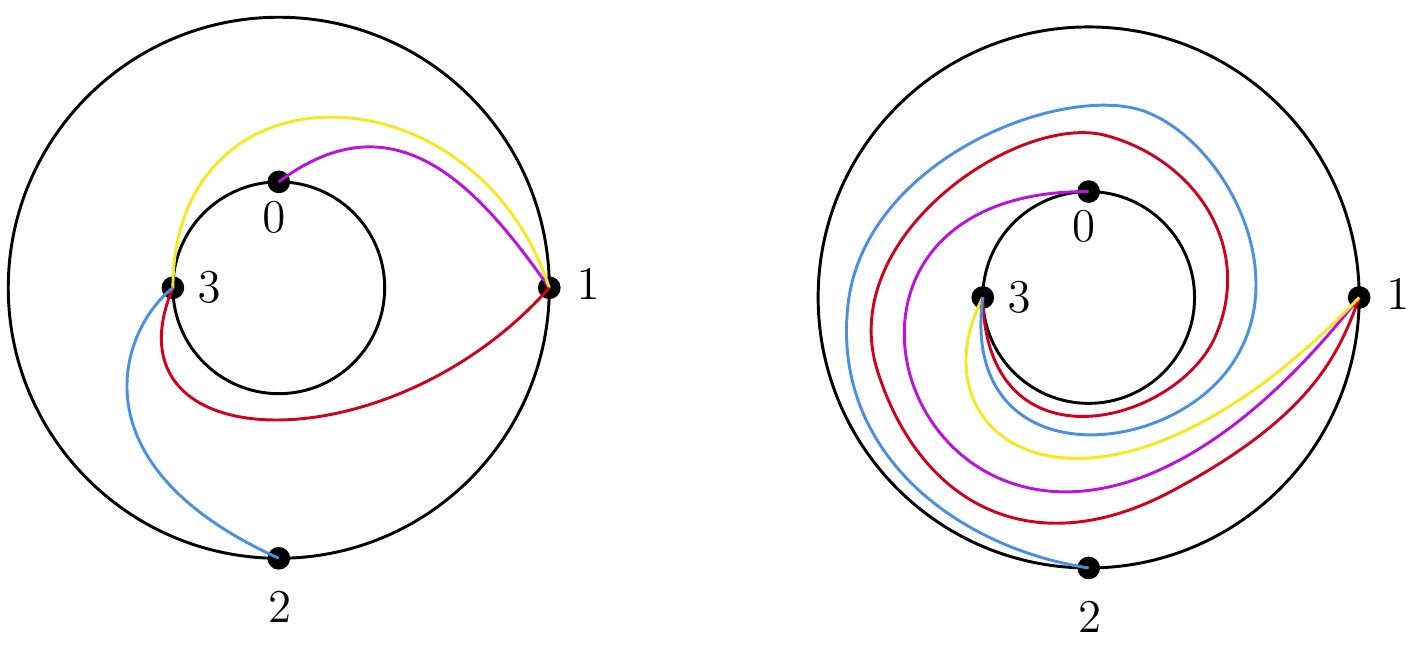}

\end{center}
\end{exmp}

Since $2\pi$ Dehn twists are homeomorphisms, note that they preserve exceptionality of the arc diagram, and hence the exceptional collection by Theorem \ref{thm: ec bijection with arc diagrams}. Before proving that there are only finitely many equivalence classes of exceptional collections under the equivalence relation in Definition \ref{defn: equivalent arc diagrams}, we will study the conditions that the winding numbers of the arcs must satisfy to ensure that the diagram is exceptional.

As a result of Lemma \ref{lem: regulars are small} along with the fact that $2\pi$ Dehn twists fix exterior arcs, we adopt the convention that exterior arcs (regular modules) will be written $a(i,j)$; that is, without indicating their winding number of zero. 

\begin{lem}\label{lem: only projectives and injectives in same fam}
Let $\xi = \{a(i_1,j_1)[\lambda_1], \dots, a(i_k,j_k)[\lambda_k]\}$ be a not necessarily complete exceptional collection without regular modules containing both preprojective and preinjective modules. Then $\lambda_p = 0$ for all $p$. 
\end{lem}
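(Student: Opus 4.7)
I proceed by contradiction: suppose some $\lambda_{p_0}\neq 0$ in $\xi$. Applying the annulus duality of Remark~\ref{rem: Duality on Annulus} if necessary (it swaps preprojective and preinjective arcs), I may assume the offending arc corresponds to a preprojective module $P=(i_P,j_P;l_P)\in\xi$ with $l_P\geq 1$. Fix any preinjective $I=(i_I,j_I;l_I)\in\xi$, which exists by hypothesis. It suffices to show that the arcs $\varphi(P)$ and $\varphi(I)$ cross on $A_{Q^{\bm\varepsilon}}$, for then Lemma~\ref{lem: key lemma on annulus}(1) contradicts the exceptionality of $\xi$.

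To detect the crossing I would lift to the universal cover, where (by the correspondences of Section~\ref{sec: chords and annuli}) arc crossings on the annulus are equivalent to crossings of some pair of strand lifts in $S_{n,\tilde\varepsilon}$. The fundamental lift of $P$ is a strand $c(\tilde i_P,\tilde j_P)$ with $\tilde\varepsilon_{\tilde i_P}=+$, $\tilde\varepsilon_{\tilde j_P}=-$, and $k_P:=\tilde j_P-\tilde i_P\geq n+1$: the hypothesis $l_P\geq 1$ contributes a full period of length $n$ to the walk, and since $\tilde\varepsilon$ has period $n$ the residual direct part must add at least one further step in order to switch from a $+$ vertex to a $-$ vertex. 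The lifts of $I$ are the translates $c(\tilde i_I+mn,\tilde j_I+mn)$ for $m\in\mathbb Z$, each of span $k_I\geq 1$ with endpoint signs $(-,+)$.

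The key counting step is to exhibit some $m\in\mathbb Z$ for which $[\tilde i_I+mn,\tilde j_I+mn]$ overlaps the interior of $[\tilde i_P,\tilde j_P]$. Interior overlap happens precisely when
\[
	mn\in(\tilde i_P-\tilde j_I,\ \tilde j_P-\tilde i_I),
\]
an open interval of length $(k_P+k_I)/n\geq(n+2)/n>1$. Any open interval of real length strictly greater than $1$ contains an integer, so such $m$ exists.

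Finally I would check that interior interval overlap always forces a strand crossing, whether the intervals are nested or interleaved. This is an intermediate value argument on $y_P(x)-y_I(x)$ using monotone curve representatives and the local sign rule (strand below at $+$, above at $-$): in the nested case $[\tilde i_I+mn,\tilde j_I+mn]\subset[\tilde i_P,\tilde j_P]$ the $I$-lift's endpoints carry opposite signs ($-$ then $+$), so $y_P$ has opposite signs at those two positions while $y_I$ vanishes there, producing a crossing; in the interleaving case $\tilde i_P<\tilde i_I+mn<\tilde j_P<\tilde j_I+mn$ the matching signs at $\tilde j_P$ and $\tilde i_I+mn$ (both $-$) give the same conclusion by the same IVT. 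Projecting the resulting strand crossing back to the annulus yields the required arc crossing of $P$ and $I$. The main technical obstacle is organizing this final case analysis of the $\tilde\varepsilon$-signs cleanly; once it is in hand, the counting step is a one-line length estimate, and the preinjective branch $\lambda<0$ is handled identically (now $k_I\geq n+1$ and the inequality $(k_P+k_I)/n>1$ is even easier).
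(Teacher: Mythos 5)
Your argument is correct, but it takes a genuinely different route from the paper's. The paper proves the contrapositive algebraically: taking $U$ preprojective with winding number $\geq 1$ and $V$ injective, it observes that (since $U$ wraps fully around the quiver) $V$ is a substring of $U$, writes down the trivial submodule factorization of $V$ and the induced quotient factorization of $U$, checks using Remark~\ref{rem: prepro and preinj strands defn} that the arrows flanking the shared substring point the right way, and concludes this pair is a two-sided graph map so that Theorem~\ref{thm: basis for ext} gives $\text{Hom}(U,V)\neq 0\neq\text{Ext}(V,U)$; the case where $V$ has nonzero parameter is then reduced to the injective case by Dehn twists (which requires the earlier observation that $2\pi$ Dehn twists preserve exceptionality). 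You instead detect the same obstruction geometrically: you lift to $S_{n,\tilde\varepsilon}$, note that the nonzero winding together with the preprojective sign condition $(\tilde\varepsilon_{\tilde i_P},\tilde\varepsilon_{\tilde j_P})=(+,-)$ forces span $k_P\geq n+1$, use the length estimate $(k_P+k_I)/n>1$ to find a $\mathbb{Z}$-translate of the preinjective strand with interior interval overlap, and close with a sign/IVT argument to force a crossing, then invoke Lemma~\ref{lem: key lemma on annulus}(1). The two proofs are essentially dual encodings of the same obstruction — the two-sided graph map the paper builds is exactly what a strand crossing in the universal cover records — but your version handles arbitrary winding numbers on both modules in a single pass and does not need the Dehn-twist reduction, at the cost of a longer sign case analysis. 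Two points to tighten if you write this out in full: (i) when you find $m$ with interior interval overlap, explicitly rule out the shared-endpoint configurations using the sign discrepancies $\tilde\varepsilon_{\tilde i_P}=+\neq-=\tilde\varepsilon_{\tilde i_I+mn}$ and $\tilde\varepsilon_{\tilde j_P}=-\neq+=\tilde\varepsilon_{\tilde j_I+mn}$, so the overlap really does fall into one of the four nested/interleaved cases; and (ii) carry out all four of those cases rather than the two you sketch (they are symmetric, but worth recording).
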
 

\begin{proof}
We prove the contrapositive. Let $U = (i,j;k)$ be preprojective with $k\geq1$ and $V = (l,m;0)$ be injective. Then we can decompose $V = (e_{s(\alpha'_{l+1})},\alpha'_{l+1}\dots\alpha'_{m},e_{t(m)})$ which is a submodule factorization. As a string, $U = e_{i+1}(\alpha_{i+1} \dots \alpha_j)^k\alpha_{j+1}\dots\alpha_{j}e_j$. Since $k\geq 1$, we have that $\alpha'_{l+1}\dots\alpha'_{m}$ is a substring of $U$. Write $U = (\alpha_{i+1}\dots\alpha'_{l},\alpha'_{l+1}\dots\alpha'_{m},\alpha'_{m+1}\dots\alpha_{j})$. Since $V$ is preinjective, by Remark \ref{rem: prepro and preinj strands defn}, $\alpha'_{l} \in Q_1^{-1}$ and $\alpha'_{m+1} \in Q_1$. Therefore, this admissible pair gives a two-sided graph map, hence by Theorem \ref{thm: basis for ext}, Hom$(U,V)\neq 0 \neq \text{Ext}(V,U)$. Therefore, $U$ and $V$ can't be together in an exceptional set. The proof for when the parameter of $V$ is non-zero follows from this by performing Dehn twists.
\end{proof}

\begin{lem} \label{lem: Preproj and Preinj can't get too long}
Suppose we have an exceptional pair $\{a(i_1,j_1)[\lambda_1], a(i_2,j_2)[\lambda_2]\}$ where both $U = a(i_1,j_1)[\lambda_1]$ and $V=a(i_2,j_2)[\lambda_2]$ are either preprojective or preinjective. Then $\lambda_1 \leq \lambda_2 + 1$ or $\lambda_1 + 1 \leq \lambda_2$. 
\end{lem}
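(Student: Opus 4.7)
The plan is to argue by contradiction, reducing first to the case that both $U$ and $V$ are preprojective. The duality of Remark \ref{rem: Duality on Annulus} exchanges preprojective and preinjective modules and preserves exceptional pairs, so the preinjective case will follow from the preprojective case. Since $\{U,V\}$ is a set, the roles of $\lambda_1$ and $\lambda_2$ are symmetric, so the only configuration that must be ruled out is $\lambda_1 \geq \lambda_2 + 2$. I plan to do so by exhibiting a two-sided graph map from $V$ to $U$ and then invoking Theorems \ref{thm: basis for hom} and \ref{thm: basis for ext} to violate both orderings of exceptionality simultaneously.

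Writing
\[
U = e_{i_1+1}(\alpha_{i_1+1}\cdots\alpha_{i_1})^{\lambda_1}\alpha_{i_1+1}\cdots\alpha_{j_1-1}e_{j_1},
\qquad
V = e_{i_2+1}(\alpha_{i_2+1}\cdots\alpha_{i_2})^{\lambda_2}\alpha_{i_2+1}\cdots\alpha_{j_2-1}e_{j_2},
\]
let $L_U, L_V$ denote the number of arrows in each walk. Preprojectivity and the maximality convention on $\lambda$ force the offsets $c_k := L_k - n\lambda_k$ to lie in $\{1,\ldots,n-1\}$, so $\lambda_1 \geq \lambda_2 + 2$ yields $L_U - L_V \geq n+2$. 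Since both walks traverse $Q^{\bm\varepsilon}$ counter-clockwise, the starting vertex $i_2+1$ of $V$ appears at position $p$ of the walk of $U$ exactly when $p \equiv i_2 - i_1 \pmod{n}$. Letting $p_0 = (i_2 - i_1)\bmod n$, I will take $p = n$ if $p_0 = 0$ and $p = p_0$ otherwise; in either case $1 \leq p \leq L_U - L_V - 1$ by the length estimate, so $V$ appears as an interior substring of $U$, giving a factorization $U = F_U\cdot V\cdot D_U$ with both $F_U$ and $D_U$ of positive length.

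The next step is to recognize this as a submodule factorization of $U$. The terminal arrow of $F_U$ is the counter-clockwise step at vertex $i_2$, and the initial arrow of $D_U$ is the counter-clockwise step at vertex $j_2$. By Remark \ref{rem: prepro and preinj strands defn}, preprojectivity of $V$ forces $\varepsilon_{i_2} = +$ and $\varepsilon_{j_2} = -$, so the first of these arrows lies in $Q_1$ and the second in $Q_1^{-1}$, which is precisely the orientation condition in the definition of a submodule factorization. Pairing this with the trivial quotient factorization $(e_{s(V)},V,e_{t(V)})$ of $V$ produces an admissible pair whose associated graph map $V \to U$ is two-sided. By Theorem \ref{thm: basis for hom} it is a nonzero element of $\operatorname{Hom}(V,U)$, and by Theorem \ref{thm: basis for ext} it contributes a nonzero class to $\operatorname{Ext}(U,V)$. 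Consequently $(U,V)$ fails exceptionality because $\operatorname{Hom}(V,U) \neq 0$, while $(V,U)$ fails because $\operatorname{Ext}(U,V) \neq 0$, contradicting the hypothesis that $\{U,V\}$ is an exceptional pair.

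The main technical obstacle is the bookkeeping needed to guarantee an interior occurrence of $V$ in $U$: one has to track the two offsets $c_1, c_2$ together with the residue $p_0$ carefully enough to conclude that $1 \leq p \leq L_U - L_V - 1$ admits an admissible integer solution regardless of these parameters. An alternative route would be purely geometric, showing on the universal cover of $A_{Q^{\bm\varepsilon}}$ that the bridging chords corresponding to $U$ and $V$ must cross whenever $|\lambda_1-\lambda_2|\geq 2$ and then invoking Lemma \ref{lem: key lemma on annulus}(1); however, the direct string-algebra construction above seems cleaner because the existence of the substring and the orientation check both follow transparently from preprojectivity.
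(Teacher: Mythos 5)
Your proof is correct and takes essentially the same route as the paper: exhibit $V$, via its trivial quotient factorization, as an interior substring of $U$ giving a submodule factorization of $U$, then apply Theorems \ref{thm: basis for hom} and \ref{thm: basis for ext} to the resulting two-sided graph map to rule out both orderings of exceptionality, with the preinjective case handled by duality. You are somewhat more careful than the paper in two places --- in verifying via the length estimate $L_U-L_V\geq n+2$ that the flanking pieces $F_U$ and $D_U$ are both nonempty (the paper asserts this without justification), and in correctly indexing the boundary arrows $\alpha_{i_2}\in Q_1$ and $\alpha_{j_2}^{-1}\in Q_1^{-1}$ forced by Remark \ref{rem: prepro and preinj strands defn}.
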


\begin{proof}
We prove the contrapositive. Suppose both modules are preprojective and $\lambda_1 > \lambda_2 + 1$. Write $U = \alpha_{i_1 + 1}\dots \alpha_{i_j}$ and $V = \alpha_{i_2 + 1} \dots \alpha_{j_2} = (e_{s(\alpha_{i_2 + 1})},\alpha_{i_2 + 1} \dots \alpha_{j_2},e_{t(\alpha_{j_2})})$. Then this is a quotient factorization of $V$. Since $\lambda_1 > \lambda_2 + 1$, we have that $\alpha_{i_2 + 1} \dots \alpha_{j_2}$ is a substring of $U$. Write $U = (\alpha_{i_1 + 1}\dots \alpha_{i_2}, \alpha_{i_2 + 1} \dots \alpha_{j_2},\alpha_{j_2+1}\dots\alpha_{j_1})$. Since $V$ is preprojective, by Remark \ref{rem: prepro and preinj strands defn}, $\alpha_{j_2} \in Q_1$ and $\alpha_{j_2+1} \in Q_1^{-1}$. Thus this admissible pair gives a two-sided graph map. We conclude from Theorem \ref{thm: basis for ext} that neither $(U,V)$ nor $(V,U)$ form an exceptional sequence. The proof when both $U$ and $V$ are preinjective follows from duality.
\end{proof}

Putting these Lemmas together, we have shown that if an exceptional collection contains a regular module, this module must be of the form $(i,j;0)$. Moreover, if an exceptional collection contains both preprojective modules and preinjective modules, then they must both be projective and injective. Finally, if an exceptional collection contains only regular modules and preprojective (preinjective) modules, then the winding numbers of the corresponding arcs can differ by at most 1. Before showing that there are only finitely many equivalence classes of exceptional collections under the equivalence relation in Definition \ref{defn: equivalent arc diagrams}, we require one more definition.

\begin{defn}
We call an exceptional collection \textbf{small} if in its corresponding arc diagram, all winding numbers are 0. Two small diagrams are \textbf{equivalent} if they are equivalent under the relation defined in Definition \ref{defn: equivalent arc diagrams}. 
\end{defn}

\begin{thm} \label{thm: bijetion with small diagrams}
There are finitely many equivalence classes of exceptional collections and they are in bijection with equivalence classes of small fundamental diagrams.
\end{thm}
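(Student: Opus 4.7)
The plan is to combine Theorem \ref{thm: ec bijection with arc diagrams} with an analysis of how $2\pi$ Dehn twists act on winding numbers. By that theorem, the equivalence relation of Definition \ref{defn: equivalent arc diagrams} on exceptional collections corresponds under the bijection to Dehn-twist equivalence of fundamental arc diagrams, so the problem reduces to proving two things: that there are only finitely many Dehn-twist equivalence classes of fundamental arc diagrams, and that every such class contains a small representative. The former will follow immediately once the latter is established, since a small fundamental arc diagram is specified by a choice of $n$ arcs with winding $0$ among the finitely many candidate arcs on $A_{Q^{\bm\varepsilon}}$, subject to the fundamental diagram axioms.

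Next, I would establish that every exceptional arc diagram is Dehn-twist equivalent to a small one by case analysis on the module types occurring in the corresponding exceptional collection. If both preprojective and preinjective modules appear, Lemma \ref{lem: only projectives and injectives in same fam} forces all bridging arcs to correspond to projective and injective modules with winding $0$, and regulars always have winding $0$ by Lemma \ref{lem: regulars are small}, so the diagram is already small. If only preprojectives appear (possibly alongside regulars), then Lemma \ref{lem: Preproj and Preinj can't get too long} together with the discussion following it bounds all winding numbers of bridging arcs in an interval of length at most one; applying the appropriate number of $2\pi$ Dehn twists translates this interval so that $0$ is its minimum, and a further invocation of the injective-projective exchange built into the Dehn twist formula collapses any remaining winding to $0$. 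The preinjective case is handled dually.

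Putting these pieces together yields the bijection: given an exceptional collection, pass to its arc diagram, normalize to a small representative in its Dehn-twist class, and record the resulting equivalence class of small diagrams; the map is well-defined because the normalization is carried out by Dehn twists, and it is bijective because small diagrams in the same class are by definition equivalent. The main obstacle I anticipate is the last step of the case analysis, where a pure-preprojective (or pure-preinjective) exceptional collection has winding numbers spanning a two-element interval $\{m,m+1\}$; since ordinary Dehn twists shift all such windings uniformly, this configuration resists direct normalization and will require either a structural argument ruling out such two-element spans in complete exceptional collections, or a careful use of the injective-projective swap to reduce the two-element case to the single-value case.
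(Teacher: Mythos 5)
Your proposal follows essentially the same route as the paper's proof: normalize via $2\pi$ Dehn twists using the bounds from Lemmas \ref{lem: only projectives and injectives in same fam} and \ref{lem: Preproj and Preinj can't get too long} on admissible winding numbers, and resolve the residual $\{m,m+1\}$ span in the pure-preprojective (resp.\ pure-preinjective) case by the projective--injective swap built into the twist formula. The ``obstacle'' you flag at the end is in fact the key move of the paper's argument, and your guessed resolution (the projective--injective exchange) is precisely the one the paper uses: after translating so that the windings lie in $\{0,1\}$, one further counterclockwise twist sends the winding-$1$ projectives to winding-$0$ projectives while sending the winding-$0$ projectives to winding-$0$ injectives, yielding a genuinely small diagram. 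The only point the paper is more explicit about is well-definedness: in the single-value case the equivalence class contains two distinct small diagrams (one with projectives, one with injectives), which is why the target of the bijection must be equivalence classes of small diagrams rather than small diagrams themselves; your phrasing (``the map is well-defined because the normalization is carried out by Dehn twists'') handles this correctly but leaves the short transitivity argument implicit.
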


\begin{proof}
We can associate a small diagram to any given equivalence class of exceptional collections as follows. Given an element of an equivalence class $\xi= \{a(i_1,j_1)[\lambda_1], \dots, a(i_n,j_n)[\lambda_n]\}$, either all the $\lambda_k$ are the same, or we have two subsets $A,B \subset \{\lambda_k\}$ such that if $\lambda_l \in A$ then $\lambda_l = \lambda_s \pm 1$ for all $\lambda_s \in B$. Note that by Lemma \ref{lem: Preproj and Preinj can't get too long}, it is either exclusively plus or exclusively minus. If for all $k$, $\lambda_k = z$ for a positive integer $z$, the associated small diagram is the one attained by performing $z$ counter clockwise $2\pi$ Dehn twists. This small diagram contains regular modules and projective modules with winding number $0$. Performing one more counter clockwise $2\pi$ Dehn twist will give another small diagram containing regular modules and injective modules with winding number $0$. Analogously, if  for all $k$, $\lambda_k = -z$, then we also get two small diagrams, one containing regular modules and injective modules and the other containing regular modules and projective modules. In this case, these are the only two small diagrams associated to the equivalence class of $\xi$.

On the other hand, suppose we have two subsets $A,B \subset \{\lambda_k\}$ such that if $\lambda_l \in A$ then $\lambda_l = \lambda_s \pm 1$ for all $\lambda_s \in B$ and let $\lambda = \text{min}(|\lambda_s|,|\lambda_l|)$. Then by performing $\lambda$ $2\pi$ Dehn twists in the appropriate direction, we attain a diagram in which all $\lambda_l\in A$ are one or negative one depending on if the corresponding modules are preprojective or preinjective, and all $\lambda_s \in B$ are zero. If all the $\lambda_l\in A$ equal one, then all the modules in $B$ must also be preprojective by Lemma \ref{lem: Preproj and Preinj can't get too long}. Then one more counter clockwise $2\pi$ Dehn twist gives a small diagram in which the arcs in $B$ are injective with winding number zero, and those in $A$ are projective with winding number $0$. A similar argument holds for the case in which all the $\lambda_l\in A$ are negative one. In this case, this is the unique small diagram in the equivalence class of $\xi$. 

By definition, the association of the equivalence class of $\xi$ to the equivalence class of the aforementioned small diagram is unique; and moreover, each equivalence class of small diagrams is associated to exactly one parametrized family of exceptional collections. Since there are only finitely many small diagrams, the claim holds. 
\end{proof}

\begin{defn}\label{defn: small diagram associated to}
Let $\Xi$ be an equivalence class of exceptional collections. We define \textbf{the small diagram associated to $\Xi$} as follows. 

\begin{itemize}
\item If the small diagram constructed in the proof of Theorem \ref{thm: bijetion with small diagrams} is not unique, the small diagram that contains only regular modules and projective modules is \textbf{the small diagram associated to $\Xi$}. 
\item If the small diagram constructed in the proof of Theorem \ref{thm: bijetion with small diagrams} is unique, then this small diagram is \textbf{the small diagram associated to $\Xi$}. 
\end{itemize}
\end{defn}

With Definition \ref{defn: small diagram associated to} in mind, we see that an equivalence class $\Xi$ under the relation in Definition \ref{defn: equivalent arc diagrams} is parametrized by the number of clockwise $2\pi$ Dehn twists from the small diagram associated to $\Xi$. This motivates the following name.

\begin{defn}\label{defn: parametrized family}
We call an equivalence class of exceptional collections under the relation in Definition \ref{defn: equivalent arc diagrams} a \textbf{parametrized family of exceptional collections}.
\end{defn}

\subsection{An Algebraic Description of Parameterized Families}

\noindent

In this subsection, we will analyze the algebraic consequences of performing twists of the boundary components of the annulus. Let $Q^{\bm\varepsilon}$ be a quiver of type $\tilde{\mathbb{A}}_{n-1}$ with orientation $\bm{\varepsilon}$. Suppose that on the corresponding annulus $A_{Q^{\bm{\varepsilon}}}$, the vertices are written and labeled in clockwise order, respecting the natural numerical order on the vertices. Moreover, that the $p$ vertices on the inner boundary component are equally spaced as well as the $q$ on the outer boundary component. By an \textbf{elementary clockwise twist of the inner boundary component} we mean a clockwise ${2\pi \over p}$ twist of the inner boundary circle. Elementary clockwise twists of the outer boundary component are defined analogously, as are elementary counter clockwise twists of either component. Any twist of this form is called an \textbf{elementary twist} of the annulus.

\begin{lem} \label{lem: algebraic realization of twists}
Let $M = (i,j;l)$ be a string module.
\begin{enumerate}
\item If $M$ is preprojective, we have the following.
\begin{enumerate}
\item An elementary clockwise twist of the inner boundary is equivalent to adding a hook to the end of $M$.
\item If it is possible to delete a hook at the end of $M$, this is equivalent to an elementary counterclockwise twist of the inner boundary.
\item An elementary counter clockwise twist of the outer boundary is equivalent to adding a hook at the start of $M$.
\item If it is possible to delete a hook at the start of $M$, this is equivalent to an elementary clockwise twist of the outer boundary.
\end{enumerate}
\item If $M$ is regular, we have the following.
\begin{enumerate}
\item If $M$ resides in the left tube, an elementary clockwise twist of the outer boundary is equivalent to $\tau M$.
\item If $M$ resides in the left tube, an elementary counterclockwise twist of the outer boundary is equivalent to $\tau^{-1} M$.
\item If $M$ resides in the right tube, an elementary clockwise twist of the inner boundary component is equivalent to $\tau^{-1} M$.
\item If $M$ resides in the right tube, an elementary counter clockwise twist of the inner boundary is equivalent to $\tau M$.
\end{enumerate}
\item If $M$ is preinjective, we have the following.
\begin{enumerate}
\item If it is possible to delete a cohook at the start of $M$, this is equivalent to an elementary clockwise twist of the inner boundary.
\item An elementary counterclockwise twist of the inner boundary is equivalent to adding a cohook at the start of $M$.
\item If it is possible to delete a cohook at the end of $M$, this is equivalent to an elementary counter clockwise twist of the outer boundary.
\item An elementary clockwise twist of the outer boundary is equivalent to adding a cohook at the end of $M$.
\end{enumerate}

\end{enumerate}
\end{lem}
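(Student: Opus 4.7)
The plan is to verify each of the twelve sub-claims by (i) describing how the named elementary twist moves the endpoints of $\varphi(M)$ on $A_{Q^{\bm\varepsilon}}$, and (ii) identifying the resulting arc with $\varphi$ applied to the string obtained by the claimed algebraic operation.

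First I would pin down the geometric picture. Since $p$ elementary clockwise twists of the inner boundary compose to a full $2\pi$ Dehn twist (and likewise $q$ elementary twists of the outer boundary), each elementary twist physically drags the marked points on the named boundary by one position in the indicated direction while deforming arcs minimally to accommodate. In particular, a bridging arc $a(i,j)[l]$ whose endpoint lies on the twisted boundary is sent to a bridging arc with that endpoint moved to the adjacent marked point on the same boundary, with any winding adjustment chosen so that the composition of all $p$ or $q$ elementary twists reproduces the $l \mapsto l+1$ prescribed by the full Dehn twist. Exterior arcs on the twisted boundary have both endpoints dragged, and arcs whose endpoints do not lie on the twisted boundary are unaffected.

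With this dictionary in hand, parts 1 and 3 reduce to a case-by-case translation using Remark \ref{rem: prepro and preinj strands defn}. For part 1(a), the endpoint $\tilde{j}$ of a preprojective string has $\tilde{\varepsilon}_{\tilde{j}} = -$, and the adjacent inner marked point is the smallest $\tilde{j}' > \tilde{j}$ in the universal cover with $\tilde{\varepsilon}_{\tilde{j}'} = -$. The minimal walk extension from $\tilde{j}$ to $\tilde{j}'$ begins with an inverse arrow (leaving the $-$-vertex $\tilde{j}$) followed by a maximal run of direct arrows, which is precisely the definition of adding a hook at $t(M)$ from Section 2.3. Parts 1(b)--(d) follow by inverting this (deletion) or applying the analogous argument at $s(M)$ with the outer boundary, and parts 3(a)--(d) are dual to 1(a)--(d): preinjective bridging arcs begin on the inner boundary and end on the outer, so the roles of $s(M)$ and $t(M)$ are swapped, and the flip from hook to cohook reflects the opposite sign pattern $\tilde{\varepsilon}_{\tilde{i}} = -$, $\tilde{\varepsilon}_{\tilde{j}} = +$ at the endpoints.

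For part 2 I would invoke Theorem \ref{thm: tau of string algebra}. A left regular string has both endpoints at $+$-vertices, so a cohook can be added at each end, and hence $\tau M$ is obtained by adding cohooks at both ends. Unfolding in the universal cover shows that each added cohook runs from its $+$-endpoint to the adjacent $+$-vertex in the direction prescribed by the twist; this is exactly the effect of dragging both outer endpoints of $\varphi(M)$ by one marked point under an elementary clockwise twist of the outer boundary, settling part 2(a). Parts 2(b)--(d) follow by running the same argument in reverse (using the \emph{delete a cohook} formulation of Theorem \ref{thm: tau of string algebra}) or on the right tube. The main obstacle is the bookkeeping across the twelve configurations of (module type, twisted boundary, twist direction, affected endpoint); I would streamline this by first proving a uniform sublemma correlating ``move an endpoint to the adjacent marked point on a given boundary component in a given direction'' with ``the minimal hook or cohook extension at that endpoint in the universal cover,'' after which each of the twelve cases reduces to matching this extension against the recipes from Section 2.3 or against Theorem \ref{thm: tau of string algebra}.
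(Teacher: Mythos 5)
Your overall plan — track how an elementary twist moves the endpoints of $\varphi(M)$ and identify the result with a hook/cohook operation, then collect the cases — is the same route the paper takes, and your handling of parts 1 and 3 is essentially right. The gap is in part 2. You claim that for a left regular string ``a cohook can be added at each end, and hence $\tau M$ is obtained by adding cohooks at both ends.'' This is false under the paper's conventions: a left regular string $(i,j;l)$ has $\tilde{\varepsilon}_{\tilde{i}} = \tilde{\varepsilon}_{\tilde{j}} = +$, so the walk can be extended at its end $t(M)=j$ by a \emph{direct} arrow (allowing a cohook there) but can only be extended at its start $s(M)=i+1$ by a \emph{direct} arrow as well, which means only a \emph{hook} can be added at the start, never a cohook. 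Geometrically this matches: an elementary clockwise twist of the outer boundary pushes both $i$ and $j$ forward, which \emph{lengthens} the support past $j$ (a cohook at the end) but \emph{shortens} it past $i$ (deleting a hook at the start), so the two endpoint moves have opposite algebraic effects. Theorem \ref{thm: tau of string algebra} then says: add a cohook where possible (at $t(M)$) and delete a hook where it was not possible to add a cohook (at $s(M)$), and this is precisely the paper's identification of the twist with $\tau M$.

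The same defect infects your proposed uniform sublemma: ``move an endpoint to the adjacent marked point'' does not always correspond to a hook/cohook \emph{extension}. Depending on which endpoint is moved relative to the orientation of the arc, the move is a contraction (delete a hook or cohook) rather than an extension, and in part 2 it is essential that the two endpoints behave oppositely, otherwise the application of Theorem \ref{thm: tau of string algebra} does not go through. If you replace your claim with ``cohook at the end, delete a hook at the start, and observe that a cohook at the start is impossible since $\tilde{\varepsilon}_{\tilde{i}}=+$'' the rest of your argument closes the case correctly; the argument for the right tube and for inner twists is dual.
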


\begin{proof}
This proof relies on the way in which our annulus is set up. Recall that the labeled vertices on the boundary components of the annulus are written in clockwise order respecting the order in which the numbers appear in the quiver. 
\begin{enumerate}
\item Let $M = (i,j;k)$ be a preprojective string and $a(i',j')[k]$ its corresponding arc.
\begin{enumerate}
\item An elementary clockwise twist of the inner boundary component of the annulus keeps $i'$ the same, while sending $j'$ to $s$, where $s$ is the next vertex on the inner boundary component clockwise of $j'$. Thus, we have extended $M$ at its end by adding the next inverse arrow along with all possible subsequent direct arrows, these of course correspond to all the marked points on the outer boundary of the annulus between $j'$ and $s$. Therefore, this operation is equivalent to adding a hook to the end of $M$.
\item If it is possible to delete a hook at the end of $M$, then the string $M$ contains an inverse arrow. Then there is a vertex on the inner boundary component of the annulus $s\neq j'$ over which the arc corresponding to $M$ passes. Let $s$ be the first vertex on the inner boundary component of the annulus counter clockwise of $j'$. An elementary counter clockwise twist of the inner boundary component keeps $i'$ the same, while sending $j'$ to $s$. This is equivalent to removing the last inverse arrow from the string $M$, along with all subsequent direct arrows (the marked points on the outer boundary component). Therefore, this operation is equivalent to deleting a hook at the end of $M$.
\end{enumerate}
The proofs of $(c)$ and $(d)$ are analogous to those of $(a)$ and $(b)$.
\item Suppose $M=(i,j;k)$ is regular and let $a(i',j')[k]$ be its corresponding arc.
\begin{enumerate}
\item Suppose $M$ is left regular, so that $\bm\varepsilon_i = \bm\varepsilon_j = +$. Then an elementary clockwise rotation of the outer boundary sends $j'$ to the next possible vertex $s$ clockwise of $j'$ with $\bm\varepsilon_s = +$. This is equivalent to extending $M$ at the end by a direct arrow while adding all possible subsequent inverse arrows, and hence, equivalent to adding a cohook at the end of $M$. Moreover, this operation sends $i'$ to the next possible vertex $x$ clockwise of $i'$ on the outer boundary component. This is equivalent to removing the first direct arrow from the string $(s,j;k')$, along with all preceding inverse arrows (the marked points on the outer boundary component). Therefore, this operation is equivalent to deleting a hook at the start of $M$. Since $\bm\varepsilon_i = \bm\varepsilon_j = +$, it is impossible to add a cohook at the start of $M$. Therefore by Theorem \ref{thm: tau of string algebra}, this operation is equivalent to $\tau M$.
\end{enumerate}
The proof of $(b)$ is analogous to that of $(a)$ and the proofs of $(c)$ and $(d)$ follow from the fact that the statements are dual to $(a)$ and $(b)$.
\item This case is dual to 1.
\end{enumerate}
\end{proof}

Using this lemma, we can give a combinatorial partial description of $\tau$.

\begin{thm}\label{thm: tau as twists}
Let $\xi = (M_1,M_2,\dots M_n)$ be an exceptional collection of $\Bbbk Q$ modules that does not contain any projectives, and $D_{\xi}$ its corresponding arc diagram. Then the arc diagram $D_{\tau \xi}$ corresponding to the exceptional collection $\tau \xi = (\tau M_1,\tau M_2,\dots \tau M_n)$ is attained from $D_{\xi}$ by an elementary clockwise twist of the outer boundary circle followed by an elementary counter clockwise twist of the inner boundary circle.
\end{thm}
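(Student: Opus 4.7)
The plan is to prove the statement arc-by-arc. Since the boundary twists act uniformly on every arc in a given diagram, it suffices to show: for each indecomposable module $M$ occurring in $\xi$, the arc $\varphi(\tau M)$ equals the image of $\varphi(M)$ under one elementary clockwise twist of the outer boundary followed by one elementary counter-clockwise twist of the inner boundary. Because the two boundary twists act on disjoint sets of endpoints they commute, so the order of composition does not matter. I will then split into four cases depending on whether $M$ is preprojective (non-projective, by hypothesis), left regular, right regular, or preinjective.

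For the preprojective case, the fundamental lift of $M$ satisfies $\tilde\varepsilon_{\tilde i}=+$ and $\tilde\varepsilon_{\tilde j}=-$ by Remark \ref{rem: prepro and preinj strands defn}. A short orientation analysis, looking at the two arrows incident to $\tilde j$ (and dually to $\tilde i + 1$) and ruling out those already used in the walk via the no-$\alpha\alpha^{-1}$ string condition, shows that one cannot extend $M$ by a direct arrow at $t(M)$ nor by an inverse arrow at $s(M)$. Hence no cohook can be added at either end. Theorem \ref{thm: tau of string algebra} then forces $\tau M$ to be obtained by deleting a hook at both ends, and the assumption that $M$ is non-projective guarantees that both deletions succeed (otherwise $\tau M = 0$). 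Parts 1(b) and 1(d) of Lemma \ref{lem: algebraic realization of twists} identify these two deletions with the elementary counter-clockwise inner twist and the elementary clockwise outer twist respectively, proving the claim in this case. The preinjective case is completely dual: I would check that for any preinjective string both endpoints admit a cohook extension, so Theorem \ref{thm: tau of string algebra} computes $\tau M$ by adding cohooks at both ends, and parts 3(b) and 3(d) of Lemma \ref{lem: algebraic realization of twists} yield exactly the same pair of twists.

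For left regular $M$ the arc $\varphi(M)$ is exterior on the outer boundary, so an elementary twist of the inner boundary leaves it fixed: both endpoints are on the outer circle, and the arc can be isotoped to avoid any interaction with the inner boundary, so its isotopy class is unaffected. Hence the composition reduces to the elementary clockwise outer twist, which equals $\tau M$ by part 2(a) of Lemma \ref{lem: algebraic realization of twists}. The right regular case is symmetric using part 2(d). Assembling the four cases arc-by-arc gives $D_{\tau\xi}$ from $D_\xi$ as claimed.

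The main obstacle is the careful verification of the ``cohook non-extendability'' for preprojective strings and its dual for preinjectives; in particular, one must rule out cohook extensions that would revisit an arrow already in the walk and thereby produce $\alpha\alpha^{-1}$. This is a short but somewhat delicate case analysis on $\tilde\varepsilon_{\tilde j -1}$ and $\tilde\varepsilon_{\tilde i + 1}$ using the string condition, and it is the only non-routine ingredient beyond the direct application of Lemma \ref{lem: algebraic realization of twists}.
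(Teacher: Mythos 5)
Your proposal is correct and takes essentially the same route as the paper: split on module type, rule out adding cohooks at either endpoint of a preprojective string (and dually show cohooks can always be added for preinjectives) so that Theorem~\ref{thm: tau of string algebra} reduces $\tau$ to the two hook/cohook operations, then match those against the appropriate parts of Lemma~\ref{lem: algebraic realization of twists}, with the regular case handled by Lemma~\ref{lem: algebraic realization of twists}~2 and the observation that the twist of the ``wrong'' boundary fixes an exterior arc. The only real difference is that you spell out the orientation analysis showing cohooks cannot be added to preprojectives and that non-projectivity forces both hook deletions to succeed, points the paper's proof states more tersely; your reasoning on those points is sound.
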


\begin{proof}
Let $M$ be a string module. The case in which $M$ is regular has already been proven in Lemma \ref{lem: algebraic realization of twists}. If $M$ is preprojective and not projective, then the corresponding string contains an inverse arrow. Then the result follows from Lemma \ref{lem: algebraic realization of twists} 1 (b) and (d) along with Theorem \ref{thm: tau of string algebra}. Similarly, if $M$ is preinjective, then this follows from Lemma \ref{lem: algebraic realization of twists} 3 (b) and (d) along with Theorem \ref{thm: tau of string algebra}. 
\end{proof}

Note that, in the case in which it is not possible to delete hooks in projectives, or delete cohooks in injectives, we can still perform elementary twists on the annulus; however, Lemma \ref{lem: algebraic realization of twists} provides no algebraic insight. In these cases, elementary twists send injective modules to projective modules and vice versa, as seen in Example \ref{exam: tranjective component}. To attain some algebraic insight in these cases, we need to recall the equivalence of categories $\nu: ProjQ \rightarrow InjQ$ defined as $DHom(-,\Bbbk Q)$ where $D$ is the duality, also known as the Nakayama functor. This functor sends a projective corresponding to vertex $i$, denoted by $P(i)$, to the corresponding injective at the same vertex, $I(i)$.

\begin{lem}\label{lem: twist is nakayama}
Let $M = (i,j;l)$ be a string module.
\begin{enumerate}
\item If $M$ is preprojective, we have the following.
\begin{enumerate}
\item If it is not possible to delete a hook at the end of $M$, an elementary counter clockwise twist of the inner boundary is equivalent to $\nu M'$ where $M'$ is attained from $M$ by adding a hook to the start of $M$.
\item If it is not possible to delete a hook at the start of $M$, an elementary clockwise twist of the outer boundary is equivalent to $\nu M'$ where $M'$ is attained from $M$ by adding a hook to the end of $M$.
\end{enumerate}
\item If $M$ is preinjective, we have the following.
\begin{enumerate}
\item If it is not possible to delete a cohook at the start of $M$, an elementary clockwise twist of the inner boundary is equivalent to $\nu^{-1} M'$ where $M'$ is attained from $M$ by adding a cohook to the end of $M$.
\item If it is not possible to delete a cohook at the end of $M$, an elementary counter clockwise twist of the outer boundary is equivalent to $\nu^{-1} M'$ where $M'$ is attained from $M$ by adding a cohook to the start of $M$.
\end{enumerate}
\end{enumerate}
\end{lem}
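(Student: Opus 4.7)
The plan is to prove case 1(a) in detail; cases 1(b), 2(a), and 2(b) follow by analogous or dual arguments. For case 1(a), suppose $M = (i,j;l)$ is preprojective and one cannot delete a hook at the end of $M$. Since deleting a hook at the end removes the last inverse arrow together with its subsequent direct arrows, the walk of $M$ must contain no inverse arrow; combined with the preprojective conditions $\tilde{\varepsilon}_{\tilde{i}} = +$ and $\tilde{\varepsilon}_{\tilde{j}} = -$ from Remark \ref{rem: prepro and preinj strands defn}, this forces $M$ to be a direct path from $i+1$ to the sink $j$, i.e.\ $M = P(i+1)$.

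Next, I would analyze $M'$. Adding a hook at the start of $M = P(i+1)$ prepends the unique direct in-arrow at $i+1$ (if one exists) followed by a maximal chain of inverse arrows walking backward through the out-arrows at each successive start vertex. In $\tilde{\mathbb{A}}_{n-1}$ every vertex has at most two arrows, so the construction terminates and yields a string isomorphic to an indecomposable projective $P(k')$, where $k'$ is the terminal vertex reached by the backward walk. (If $i+1$ has no in-arrow in $Q$, no hook can be added and $M' = M = P(i+1)$, so $k' = i+1$.) In either subcase, $\nu M' = \nu P(k') = I(k')$.

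For the geometric step, the arc of $M = P(i+1)$ is the bridging arc $a(i,j)[0]$ with zero winding. I would parameterize the elementary counter-clockwise twist of the inner boundary as a homeomorphism of $A_{Q^{\bm{\varepsilon}}}$ that rotates the inner by $-2\pi/p$ while fixing the outer and interpolates linearly in between, then compute the image of $a(i,j)[0]$ in the standard labeling. Because $M = P(i+1)$ is extremal in the preprojective region at vertex $i$ (its walk is purely direct, so no preprojective arc from $i$ lies immediately counter-clockwise of $a(i,j)[0]$ on the inner), the image of the twist cannot be represented as a preprojective arc of winding zero; under the conventions of Definition \ref{defn: fundamental lift} and the bijection $\varphi$, it must be reinterpreted as a preinjective arc. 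Matching endpoints and winding shows that this preinjective arc is exactly the arc of $I(k')$ produced in the previous step, confirming the equality with $\nu M'$.

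The remaining cases follow as outlined: case 1(b) is obtained by interchanging the inner and outer boundaries throughout the argument for 1(a), and cases 2(a) and 2(b) follow from cases 1(b) and 1(a) respectively by applying the duality $D : \text{rep}_{\Bbbk}Q \to \text{rep}_{\Bbbk}Q^{\op}$ of Remark \ref{rem: Duality on Annulus}, under which preinjective modules become preprojective modules over $Q^{\op}$, cohooks correspond to hooks, $\nu^{-1}$ corresponds to $\nu$, and the boundary components of the annulus swap. The main obstacle is the geometric identification in the third paragraph: one must track carefully that the rotation by $2\pi/p$ (not a full Dehn twist) carries the extremal preprojective arc of $P(i+1)$ to precisely the preinjective arc of $I(k')$, rather than to a preprojective arc with shifted winding, since the underlying geometric curve admits both a preprojective and a preinjective interpretation and only the bookkeeping of the twist's effect on the directional convention distinguishes them.
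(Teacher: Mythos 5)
Your overall strategy --- prove 1(a) carefully and derive 1(b) by swapping boundary components, 2(a) and 2(b) by duality --- is the same as the paper's, and your framing of the geometric step (the twist slides the inner endpoint to the adjacent inner marked point, and the image curve must then be re-read as a preinjective arc) is correct in outline.

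The substantive error is in your algebraic identification of $M'$. You set $M' = P(k')$ where $k'$ is ``the terminal vertex reached by the backward walk,'' i.e.\ the new starting vertex $s(M')$ of the string after the hook is prepended. This is wrong whenever $i$ is a source of $\tilde{Q}$ (equivalently, $\tilde{\varepsilon}_{\tilde{i}-1} = -$): then the hook operation prepends $\alpha_i$ and afterward a nonempty run of inverse arrows $\alpha_{i-1}^{-1}, \alpha_{i-2}^{-1}, \dots$, so the string of $M'$ has a peak at $i$ with $s(M') < i$. A string with a single peak is the projective labeled by that peak, so $M' = P(i)$, not $P(s(M'))$ --- which is exactly what the paper's proof concludes. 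Consequently $\nu M' = I(i)$, not $I(k')$. If you had actually carried out the ``matching endpoints and winding'' step you defer, you would have found the image arc to be $a(s,i')[0]$, whose string module is supported on the outer marked points $s+1,\dots,i$ and has socle $S(i)$, hence equals $I(i)$; this does not match $I(k')$ when $k' < i$, so the geometric check would have exposed the error rather than closed the argument. A smaller point: the parenthetical ``if $i+1$ has no in-arrow'' never arises, since $M$ preprojective forces $\tilde{\varepsilon}_{\tilde{i}} = +$ by Remark \ref{rem: prepro and preinj strands defn}, so $\alpha_i : i \to i+1$ is always a direct in-arrow at $i+1$ and a hook can always be added at the start.
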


\begin{proof}
We will prove 1. (a). The proof of 1. (b) is analogous and 2. is dual to 1. Let $M = (i,j;l)$ be a preprojective string module such that it is not possible to delete a hook at the end of $M$ and let $a(i',j')[l]$ be its corresponding arc on $A_{Q^{\bm\varepsilon}}$. Thus the string corresponding to $M$ contains no inverse arrows and $l = 0$ since $Q$ is not a cycle. Therefore, $M$ is projective and the string associated to $M$ is $\alpha_{i+1}\alpha_{i+2}\dots \alpha_{j-1}$ in the case $M$ is not simple, and $e_{i+1}$ in the case $M$ is simple. Let $s$ be the marked point on the inner boundary component of $A_{Q^{\bm\varepsilon}}$ that is immediately counter clockwise of $j'$. Then an elementary counter clockwise twist of the inner boundary component sends the arc $a(i',j')[0]$ to the arc $a(s,i')[0]$. The module corresponding to $a(s,i')[0]$ has support only at the marked points $s+1,s+2,\dots,i$, which lie entirely on the outer boundary component of the annulus. Note that it could be the case that $s+1 = i'$. Moreover, since $s+1$ is on the outer boundary component while $s$ is on the inner boundary component, we have that $s+1$ is a source. Therefore the string corresponding to $a(s,i')[0]$ contains only direct arrows $\alpha_{s+1}\dots\alpha_{i-1}$ where $\alpha_{i}$ is a direct arrow. This allows us to conclude that the module $a(s,i')[0]$ is an injective module with simple socle $S(i)$, hence $a(s,i')[0] = I(i)$.

Now consider again the projective $M = (i,j;0)$. Since $M$ contains no inverse arrows, $M = P(i+1)$. In this case, adding a hook at the start of $M$ will give $P(i)$. Let $M'$ denote the module attained by adding a hook to the start of $M$. From this we conclude that $a(s,i')[0] = I(i) = \nu M'$ as desired.
\end{proof}

In the case when $M$ is projective, then $\tau M = 0$; however, an elementary clockwise twist of the outer boundary circle followed by an elementary counter clockwise twist of the inner boundary circle does not annihilate the arc corresponding to $M$. In fact, by Lemmas \ref{lem: algebraic realization of twists} and \ref{lem: twist is nakayama}, this operation sends $M$ to the corresponding injective module at the same vertex. To make this consistent with Theorem \ref{thm: tau as twists}, it seems that it is best to study the bounded derived category of $\Bbbk Q$.

To this end, let $D^b(\Bbbk Q)$ denote the bounded derived category of rep$_\Bbbk Q$. It is known that $D^b(\Bbbk Q)$ is a triangulated category with shift functor $\Sigma^i:D^b(\Bbbk Q) \rightarrow D^b(\Bbbk Q)$. In $D^b(\Bbbk Q)$, almost split triangles that are contained inside a given shift $\Sigma^{i}\text{rep}_\Bbbk Q$ are induced by almost split sequences in rep$_\Bbbk Q$. The others are of the form $\Sigma^{j-1}I \rightarrow E \rightarrow \Sigma^{j}P$ where $P$ is indecomposable projective and $I = \nu(P)$ where $\nu$ is the Nakayama functor; that is, $I$ is the indecomposable injective associated with the same vertex of the quiver. Let $\tau: D^b(\Bbbk Q) \rightarrow D^b(\Bbbk Q)$ denote the equivalence which induces the Auslander-Reiten translation so that $\tau C = A$ if we have a triangle of the form $A\rightarrow B \rightarrow C \rightarrow \Sigma^1 A$. The \textbf{transjective component} $\mathcal{T}$ of the bounded derived category is the full subcategory containing preprojective modules and $\Sigma^{-1}$-shifted preinjective modules [\ref{ref: transjective component}]. We can realize all objects in $\mathcal{T}$ as arcs on the annulus by simply considering the arc associated to the unshifted module. 

In the transjective component, using Lemmas \ref{lem: algebraic realization of twists} and \ref{lem: twist is nakayama}, we can see that an elementary clockwise twist of the outer boundary circle followed by an elementary counter clockwise twist of the inner boundary circle sends a projective $\Bbbk Q$ module $P(i)$ to $\Sigma^{-1}I(i)$, which is $\tau P(i)$ in $\mathcal{T}$. In this setting, the result of Theorem \ref{thm: tau as twists} can be extended to include projectives. Now in the AR quiver of $\mathcal{T}$, we take the convention that for preprojectives, moving down corays corresponds to adding hooks at the end of the string, while moving up rays corresponds to adding hooks at the start of the string. Dually for shifted preinjectives, moving down corays corresponds to deleting cohooks at the start of the string of the unshifted module, where moving up rays corresponds to deleting cohooks at the end of the string of the unshifted module. The shifted injective $\Sigma^{-1} I$ with a downward arrow to the projective $P$ is thus uniquely defined as the shift of the injective module that is gotten by applying the Nakayama functor to the projective that is gotten from $P$ by adding a hook at the start of $P$. Analogously, the shifted injective $\Sigma^{-1} I$ with an upward arrow to the projective $P$ is uniquely defined as the shift of the injective module that is gotten by applying the Nakayama functor to the projective that is gotten from $P$ by adding a hook at the end of $P$. An example of a transjective component with this convention can be seen in Example \ref{exam: tranjective component}.

Finally, we will provide an algebraic description of the parametrized families introduced in Definition \ref{defn: parametrized family}. Let $M$ be an object in $\mathcal{T}$. For $z \in \mathbb{N}$, define $M \pm z$ to be the unique object that lies $z$ positions away from $M$ in the downward (upward) direction on the same coray.

\begin{lem}\label{lem: twists and movements on AR quiver}
Let $M$ be an object in $\mathcal{T}$ and let $a(i,j)[l]$ be the corresponding arc on the annulus. After performing an elementary clockwise twist of the inner circle, the new object is $M + 1$. After performing an elementary counterclockwise twist of the inner circle, the new object is $M-1$.
\end{lem}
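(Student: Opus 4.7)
The proof proceeds by case analysis on the position of $M$ in the transjective component $\mathcal{T}$, identifying the algebraic effect of the twist in each case via Lemma~\ref{lem: algebraic realization of twists} or Lemma~\ref{lem: twist is nakayama} and matching it with the combinatorial definition of moving along a coray given just before the lemma statement.

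In the generic case, $M$ is a non-projective preprojective, so it is possible to delete a hook at the end of $M$. By part 1(a) of Lemma~\ref{lem: algebraic realization of twists}, the elementary clockwise twist of the inner boundary adds a hook at the end of $M$; by the convention that downward movement on a coray for a preprojective corresponds to adding hooks at the end of the string, the resulting object is $M + 1$. Part 1(b) of the same lemma then gives that the counterclockwise twist deletes this hook, producing $M - 1$. The case in which $M = \Sigma^{-1} N$ for a non-injective preinjective $N$ is dual: parts 3(a) and 3(b) of Lemma~\ref{lem: algebraic realization of twists} state that the clockwise and counterclockwise inner twists delete and add a cohook at the start of $N$ respectively, which by the convention for shifted preinjectives are precisely the downward and upward steps on the coray.

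The remaining cases are the boundary ones, where $M = P$ is indecomposable projective or $M = \Sigma^{-1} I$ for an indecomposable injective $I$, and where the convention in $\mathcal{T}$ relates these two types of object via the Nakayama functor. For $M = P$ the clockwise twist still adds a hook at the end of $P$ (by part 1(a) of Lemma~\ref{lem: algebraic realization of twists}) and produces the non-projective preprojective $P + 1$ by the argument of the first case. The counterclockwise twist, however, cannot be interpreted as deleting a hook at the end of $P$; part 1(a) of Lemma~\ref{lem: twist is nakayama} replaces it with $\nu M'$, where $M'$ is obtained from $P$ by adding a hook at the start. The arc of $\nu M'$ represents the object $\Sigma^{-1} \nu M'$ in $\mathcal{T}$, and the convention in the paragraph introducing corays in $\mathcal{T}$ was defined precisely so that this shifted injective is the unique vertex of the AR quiver with a downward arrow to $P$; hence the counterclockwise twist produces $P - 1$. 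The case $M = \Sigma^{-1} I$ is handled dually via parts 2(a) and 2(b) of Lemma~\ref{lem: twist is nakayama}, using that $\Sigma^{-1}I$ has a downward arrow to the projective associated with adding a cohook at the end.

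The main obstacle is not finding a proof but bookkeeping: one must check that the convention chosen for the arrows between shifted preinjectives and preprojectives in the AR quiver of $\mathcal{T}$ really is compatible with elementary twists across the entire component, so that the outputs of Lemma~\ref{lem: twist is nakayama} in the boundary cases land on exactly the adjacent vertex of the coray. Once this alignment is verified, each case reduces to citing the appropriate earlier result, and the statement for counterclockwise twists follows formally from that for clockwise twists since the twists are inverse homeomorphisms of the annulus.
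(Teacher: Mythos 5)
Your proposal is correct and follows essentially the same route as the paper's proof: both invoke Lemma~\ref{lem: algebraic realization of twists} for the generic (non-projective preprojective and non-injective preinjective) cases and Lemma~\ref{lem: twist is nakayama} at the projective/shifted-injective boundary, then match the hook/cohook operations to coray steps via the convention laid out in the paragraph preceding the lemma. The only cosmetic difference is that you work through both twist directions explicitly in each of the four object types, whereas the paper treats the clockwise direction for three cases and dismisses the counterclockwise case as ``similar.''
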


\begin{proof}
By Lemma \ref{lem: algebraic realization of twists}, if the object $M$ is preprojective before and after the twist, then the twist corresponds to adding a single hook at the end of $M$, hence a downward arrow in $\mathcal{T}$. Similarly, if the object is preinjective before and after the twist, then the twist corresponds to deleting a cohook at the start of $M$, hence a downward arrow in $\mathcal{T}$. Suppose now that the object begins as an injective $I$ and ends as a projective. Then by Lemma \ref{lem: twist is nakayama}, we have that the projective is attained by adding a cohook to the end of $I$; that is, moving one position from $I$ down the ray on which $I$ lies, then applying $\nu^{-1}$. Thus $P$ is the last term in an almost split sequence in which $I$ is a summand of the middle term with a downward arrow to $P$ in the AR quiver of $\mathcal{T}$. We conclude that $P$ lies one position down a coray from $I$. The proof for counter clockwise twists is similar.
\end{proof}

\begin{thm}
Let $Q$ be a quiver of type $\tilde{\mathbb{A}}_{n-1}$ whose corresponding annulus has $p$ marked points on the inner boundary component. Let $\Xi$ be a parametrized family of exceptional collections. Then $\xi_1,\xi_2 \in \Xi$ if and only if $\xi_1$ and $\xi_2$ contain the same regular modules and there exists a $z \in \mathbb{Z}$ such that for all nonregular $M \in \xi_1$, we have that the unshifted module corresponding to $M + z\cdot p \in \mathcal{T}$ lies in $\xi_2$. 
\end{thm}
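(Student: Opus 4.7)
The plan is to reduce the statement to Lemma \ref{lem: twists and movements on AR quiver} by matching a single $2\pi$ clockwise Dehn twist of the inner circle with exactly $p$ elementary clockwise twists. Since the $p$ marked points on the inner boundary are equally spaced, the composition of $p$ elementary clockwise twists is isotopic to one full $2\pi$ Dehn twist of that circle, fixing the outer boundary and the exterior arcs on it, and carrying every inner endpoint once around back to its original marked point. I will verify that under this isotopy a bridging arc $a(i,j)[l]$ is sent to $a(i,j)[l+1]$ (or, in the degenerate case where $M$ is injective with $l=0$, to $a(j,i)[0]$, which is precisely the projective described just before Example \ref{exmp: dehn twist big ec}), and that any exterior arc is returned to itself; this shows the two operations agree with the definition of a $2\pi$ Dehn twist.

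Granting this identification, Lemma \ref{lem: twists and movements on AR quiver} will then give immediately that one clockwise $2\pi$ Dehn twist of the inner circle sends a nonregular object $M\in\mathcal{T}$ to $M+p$ and fixes every regular module. By induction on $|z|$ (and by applying the dual statement for the counterclockwise case), $z$ clockwise $2\pi$ Dehn twists send a nonregular $M$ to $M+z\cdot p$ in $\mathcal{T}$ and leave every regular module in place.

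With this translation lemma in hand, the forward direction is essentially immediate: if $\xi_1\sim \xi_2$, then by Definition \ref{defn: equivalent arc diagrams} the diagrams differ by some net number $z$ of clockwise $2\pi$ Dehn twists, the regulars on both sides match, and each nonregular $M\in\xi_1$ becomes the unshifted module corresponding to $M+z\cdot p\in\mathcal{T}$ in $\xi_2$. For the reverse direction, given the matching $z$ and identical regular parts, I apply $z$ clockwise (or $|z|$ counterclockwise) $2\pi$ Dehn twists to the arc diagram of $\xi_1$ and use the translation lemma plus Theorem \ref{thm: ec bijection with arc diagrams} (to ensure we recover an exceptional collection) to conclude the diagram obtained is precisely that of $\xi_2$, so $\xi_1\sim\xi_2$.

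The main obstacle is the careful bookkeeping in the identification step, especially the edge case of an injective with winding number zero flipping to the corresponding projective: one has to combine Lemma \ref{lem: algebraic realization of twists} with Lemma \ref{lem: twist is nakayama} to see that after the $p$-th elementary twist, the object indexed $M+p\in\mathcal{T}$ is exactly the $\Sigma^{-1}$-shifted injective (or projective) predicted by the AR combinatorics, and that this is consistent with the rule $a(i,j)[0]\mapsto a(j,i)[0]$ prescribed by the $2\pi$ Dehn twist. Once this match-up is confirmed, the two directions of the biconditional follow formally.
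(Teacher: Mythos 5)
Your proposal is correct and follows essentially the same route as the paper, which simply cites Lemma \ref{lem: twists and movements on AR quiver} as the whole argument. You fill in the implicit steps — that one $2\pi$ clockwise Dehn twist of the inner circle equals $p$ elementary clockwise twists (hence $M\mapsto M+p$ in $\mathcal{T}$, with $\tau^{-p}$ acting as the identity on the rank-$p$ right tube so right-regular exterior arcs return to themselves, while outer exterior arcs are untouched), the induction on $z$, and the translation between arc diagrams and collections via Theorem \ref{thm: ec bijection with arc diagrams} — but the key lemma and the structure of the argument are identical to the paper's intent.
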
 

\begin{proof}
This follows from Lemma \ref{lem: twists and movements on AR quiver}.
\end{proof}

\begin{exmp}\label{exam: tranjective component}
Let $Q$ be the quiver $\begin{xymatrix}{1\ar[r] \ar@/^/[rr] & 2 \ar[r] & 3}\end{xymatrix}$. Then we have a parametrized family $\{01,02,21\}$ given by the annulus on the left below. To the right of this annulus is a $2\pi$ clockwise Dehn twist giving another member of the family, namely $\{13,23,21\}$.

\begin{center}
\tikzset{every picture/.style={line width=0.75pt}} 



\end{center}

\end{exmp}

\section{Examples/Future Work} \label{sec: examples}

\noindent

For the Kronecker quiver there is only one family and for $\tilde{\mathbb{A}}_2$, the number of parametrized families of exceptional sequences is independent of orientation. This follows from the fact that there is only one possible orientation and it consists of one sink and one source. After writing code to compute the number of families, this does not seem to be the case in general. Further evidence suggesting that the number of families is dependent on orientation comes from the fact that annular non-crossing permutations are dependent on the number of marked points on both the outer and inner circle [\ref{ref: probably depends on orientation}]. We wish to find a formula to count how many families there are in general. For the orientation such that $\varepsilon_i = +$ for all $i \in \{1, 2, \dots , n\}$ and $\varepsilon_0 = -$, we have found that the number of families of exceptional collections for $n = 0,1,2,3,4$ respectively, is given by the sequence 1, 1, 8, 54, 352. When considering $\tilde{\mathbb{A}}_n$ for $n > 0$, dividing each term of this sequence by its respective $n$ gives 1, 4, 18, 88. These are the first few terms of a generalization of the Catalan numbers known as the Rothe numbers [\ref{ref: Master's Thesis Generalized Catalan Numbers}] or the Rothe-Hagen coefficients of the first type [\ref{ref: Gould Generalized Catalan numbers}]. These numbers count the number of lattice paths satisfying certain constraints. In [\ref{ref: Igusa Maresca}], we have proven that these lattice paths are indeed in bijection with families of exceptional collections of type $\tilde{\mathbb{A}}_n$ with the aforementioned orientation. 

The action of changing parameters and Dehn twists in the annulus is equivalent to moving modules along rays or corays in the transjective component of the Auslander-Reiten quiver. For quivers of type $\tilde{\mathbb{A}}_n$, the exceptionality of sequences corresponds to the modules forming certain patterns and shapes in the transjective component. Since all Euclidean quivers are of tame representation type, we wish to determine whether this behavior generalizes and if/how exceptional sequences of modules over any Euclidean quiver algebra can also be classified into finitely many parametrized families. Finally, we would like to see how infinite string modules fit into this picture and possibly extend this construction to string algebras in general.

Throughout the remainder of this section, let $Q^{\bm{\varepsilon}}$ denote the quiver 
\vspace{-.2cm}
\begin{center}
\tikzset{every picture/.style={line width=0.75pt}} 

\begin{tikzpicture}[x=0.75pt,y=0.75pt,yscale=-1,xscale=1]

\draw    (313.22,164.01) -- (338.22,164.01) ;
\draw [shift={(340.22,164.01)}, rotate = 180] [color={rgb, 255:red, 0; green, 0; blue, 0 }  ][line width=0.75]    (10.93,-3.29) .. controls (6.95,-1.4) and (3.31,-0.3) .. (0,0) .. controls (3.31,0.3) and (6.95,1.4) .. (10.93,3.29)   ;
\draw    (348.22,154.01) -- (335.41,136.62) ;
\draw [shift={(334.22,135.01)}, rotate = 53.62] [color={rgb, 255:red, 0; green, 0; blue, 0 }  ][line width=0.75]    (10.93,-3.29) .. controls (6.95,-1.4) and (3.31,-0.3) .. (0,0) .. controls (3.31,0.3) and (6.95,1.4) .. (10.93,3.29)   ;
\draw    (308.22,152.01) -- (320.95,136.55) ;
\draw [shift={(322.22,135.01)}, rotate = 129.47] [color={rgb, 255:red, 0; green, 0; blue, 0 }  ][line width=0.75]    (10.93,-3.29) .. controls (6.95,-1.4) and (3.31,-0.3) .. (0,0) .. controls (3.31,0.3) and (6.95,1.4) .. (10.93,3.29)   ;

\draw (302,154.4) node [anchor=north west][inner sep=0.75pt]    {$1$};
\draw (345,154.4) node [anchor=north west][inner sep=0.75pt]    {$2$};
\draw (324,120.4) node [anchor=north west][inner sep=0.75pt]    {$3$};

\end{tikzpicture}
\end{center}
\vspace{-.2cm}
\noindent
We thus have that $\bm{\varepsilon} = (-, +, +)$. The generators, small strand diagrams, and corresponding fundamental arc diagrams of all 8 parametrized families of exceptional collections (also sequences in this case), are listed in the charts below. The strands that can be lengthened are colored red (preinjectives) or blue (preprojectives), while the black strands are regular and hence can't be lengthened. \\

\begin{center}
\vspace{-.7 cm}
\includegraphics[width=16cm, height = 20cm]{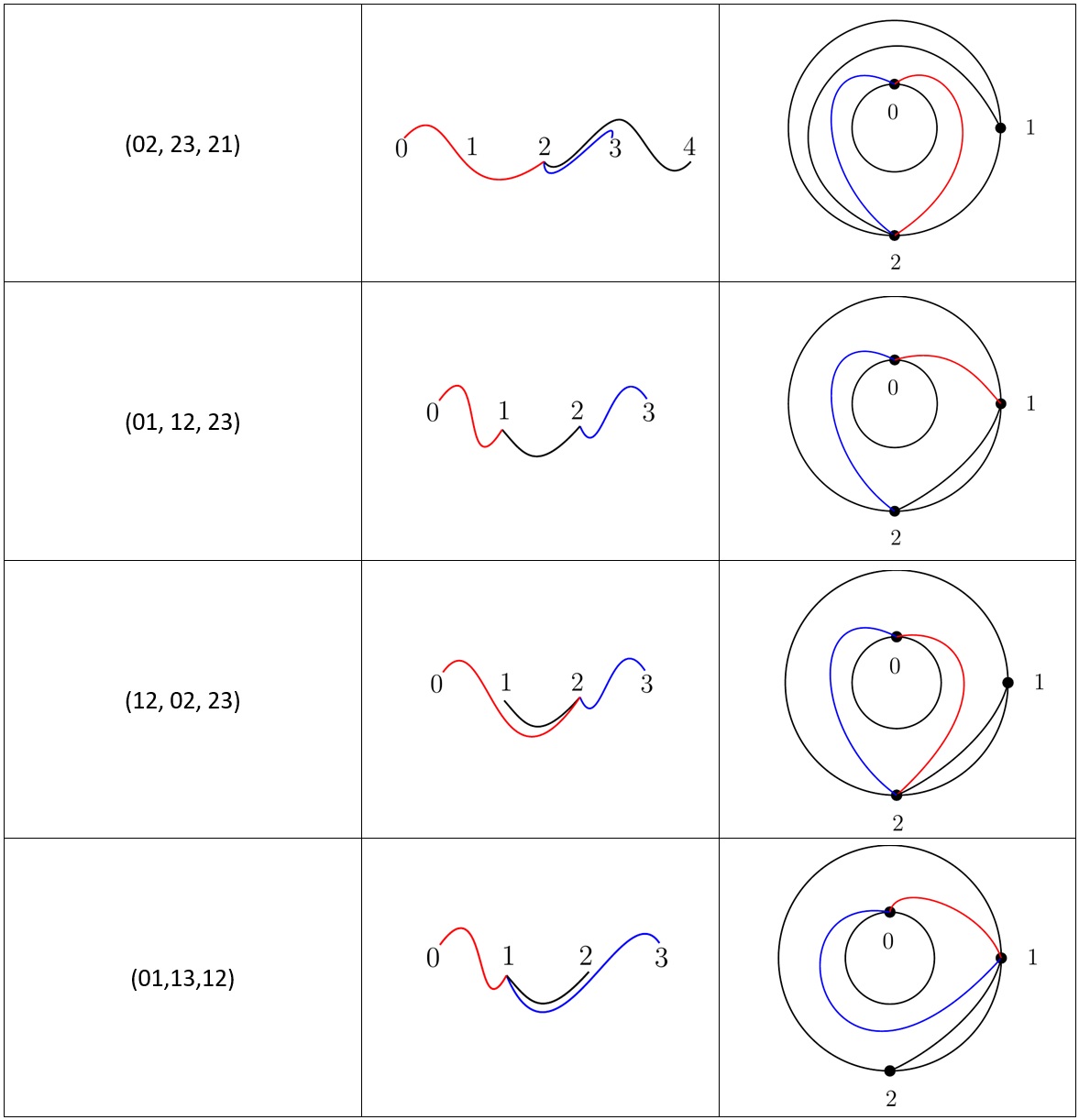}
\includegraphics[width=16cm, height = 20cm]{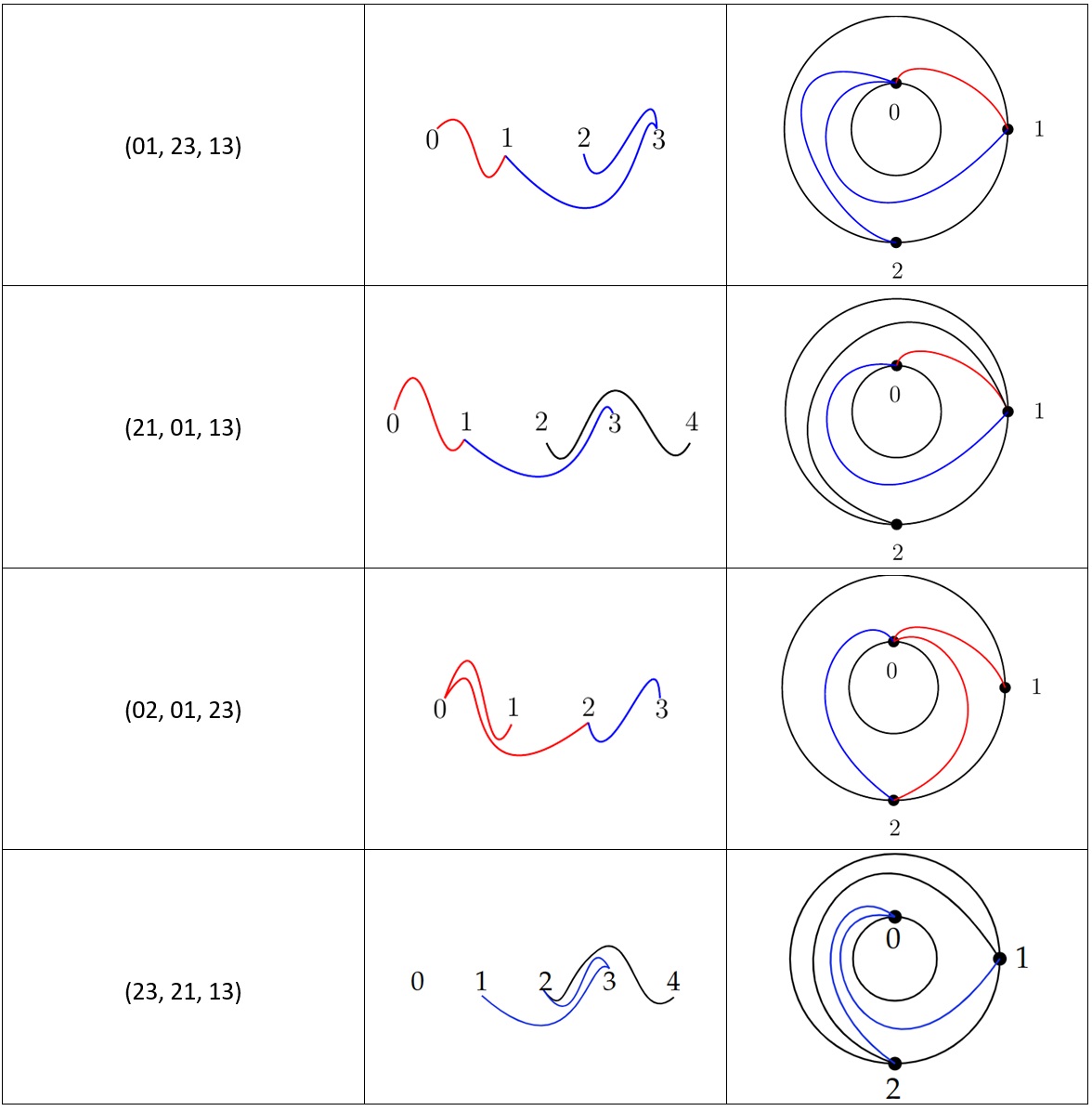}
\end{center}

\pagebreak

\section{References}

\begin{enumerate}[ {[}1{]} ]

\item Araya, T. \textit{Exceptional sequences over path algebras of type $A_n$ and non-crossing spanning trees}. Algebr. Represent. Theory, 16(1):239–250, 2013. \label{ref: chord diagrams 1}

\item Assem, I., Simson, D., Skowronski, A., \textit{Elements of the Representation Theory of Associative Algebras: Volume 1: Techniques of Representation Theory.}. Cambridge University Press. 2006. \label{ref: Blue Book}  

\item Baur, K., Coelho Simões, R. \textit{A Geometric Model for the Module Category of a Gentle Algebra}. International Mathematics Research Notices, Volume 2021, Issue 15, August 2021, Pages 11357–11392, https://doi.org/10.1093/imrn/rnz150 \label{ref: BCS} 

\item Br\"{u}stle, T., Douville, G., Mousavand, K., Thomas, H., Yıldırım, E. \textit{On the Combinatorics of Gentle Algebras}. Canadian Journal of Mathematics (2020), 72(6), 1551-1580. doi:10.4153/S0008414X19000397 \label{ref: BDMTY} 

\item Butler, M. C. R. , Ringel,  C. M., \textit{Auslander-Reiten sequences with few middle terms}. Comm. in Algebra. 15 (1987), 145-179. \label{ref: Butler, Ringel}

\item Crawley-Boevey, W. \textit{Exceptional sequences of representations of quivers}. [MR1206935 (94c:16017)]. In Representations of algebras (Ottawa, ON, 1992),volume 14 of CMS Conf. Proc., pages 117–124. Amer. Math. Soc., Providence, RI, 1993. \label{ref: Bill Exceptional sequences} 

\item Crawley-Boevey, W. \textit{Maps between representations of zero-relation algebras}. J. Algebra 126 (1989), 259-263. \label{ref: CB strings} 

\item Do, N., He, J., Mathews, D. \textit{Counting non-crossing permutations on surfaces of any genus}. The Electronic Journal of Combinatorics. \textbf{4.9} 2021. \label{ref: probably depends on orientation}

\item Fakieh, W. M., Nauman, S. K., Obaid, M. A., Ringel, C. M., and Shammakh, W. S. A. (2013). \textit{The number of complete exceptional sequences for a Dynkin algebra}. Colloquium Mathematicum 133 (2013), 197–210. \label{ref: counting exceptional sequences}

\item Fomin, S., Shapiro, M., Thurston, D. \textit{Cluster Algebras and Triangulated Surfaces Part I: Cluster Complexes}. Preprint, arXiv:0608367v3 [math.RA], 2007. \label{ref: clusters are in bijection with triangulations} 

\item Gabriel, P. \textit{The universal cover of a representation-finite algebra}. Lect. Not. Math. 903(1981), 68-105. \label{ref: Gabriel Universal Cover} 

\item Garver, A., Igusa, K., Matherne, P., J., Ostroff, J. \textit{Combinatorics of Exceptional Sequences in Type A}. The Electronic Journal of Combinatorics. \textbf{26} 2019. \label{ref: combinatorics exceptional sequences type A} 

\item Garver, A., Matherne, P., J. \textit{A combinatorial model for exceptional sequences in type A}. 27th International Conference on Formal Power Series and Algebraic Combinatorics (FPSAC 2015), Jul 2015, Daejeon, South Korea. pp.393-404. ffhal-01337808f. \label{ref: chord diagrams 2}  

\item Gorodentsev, A. L. and Rudakov, A. N. \textit{Exceptional vector bundles on projective spaces}. Duke. math. J. 54 (1987), 115–130. \label{ref: vector bundles over projective plane 10} 

\item Gorsky, E., Gorsky, M. \textit{A braid group action on parking functions}. arXiv preprint arXiv:1112.0381. \label{ref: bijection with parking functions}

\item Gould, H. W. \textit{Fundamentals of Series, eight tables based on seven unpublished
manuscript notebooks}. (1945–1990), edited and compiled by J. Quaintance (2010) \label{ref: Gould Generalized Catalan numbers}

\item Igusa, K., Maresca R. \textit{On Clusters and Exceptional sets in Types $\mathbb{A}$ and $\tilde{\mathbb{A}}$}. Preprint, arXiv:2209.12326 [math.RT], 2022 \label{ref: Igusa Maresca} 

\item Igusa, K., Sen, E. \textit{Exceptional Sequences and Rooted Labeled Forests}. Preprint, arXiv:2108.11351 [math.RT], 2021. \label{ref: Igusa Sen rooted forests} 

\item Looijenga, E. \textit{The complement of the bifurcation variety of a simple singularity}. Inventiones mathematicae 23.2 (1974): 105–116. 

\item Laking, R. \textit{String Algebras in Representation Theory}. [Thesis]. Manchester, UK: The University of Manchester; 2016. \label{ref: String Algebra Info} 

\item Nguyen, C. Van, Todorov, G. Zhu, S. \textit{Preprojective algebras of tree-type quivers}. Preprint, arXiv:1612.01585 [math.RA] \label{ref: transjective component}

\item Richardson, S. L. Jr., \textit{Enumeration of the generalized Catalan numbers}. M.Sc. Thesis, Eberly College of Arts and Sciences, West Virginia University, Morgantown, West Virginia (2005) \label{ref: Master's Thesis Generalized Catalan Numbers}

\item Ringel, M., C. \textit{The braid group action on the set of exceptional sequences of a hereditary Artin algebra}. Abelian group theory and related topics (Oberwolfach, 1993), volume 171 of Contemp. Math., pages 339–352. Amer. Math. Soc., Providence, RI, 1994. \label{ref: braid group acts on excep seq}

\item Rudakov, A.N. \textit{Exceptional collections, mutations and helices}. Helices and Vector Bundles, London Math. Soc. Lecture Note Ser., vol. 148, pp. 1–6. Cambridge Univ. Press, Cambridge (1990) \label{ref: vector bundles over projective plane}

\item Schiffler, R. \textit{Quiver Representations}. CMS Books in Mathematics, Springer International Publishing (2014) \label{ref: Schiffler Quiver Reps}

\item Schr\"{o}er, J. \textit{Modules without self-extensions over gentle algebras}. J. Algebra 216 (1999), no. 1, 178-189. \label{ref: Schroer}

\item  Seidel, U. \textit{Exceptional sequences for quivers of Dynkin type}. Communications in Algebra, 29(3),
(2001), 1373–1386 \label{ref: Exceptional sequences dynkin type} 

\item Simson, D., Skowroński, A. \textit{Elements of the Representation Theory of Associative Algebras Volume 2: Tubes and Concealed Algebras of Euclidean type}. Cambridge University Press. 2007. \label{ref: blue book 2}

\item Warkentin, M. \textit{Fadenmoduln $\ddot{u}$ber $\tilde{A}_n$ und Cluster-Kombinatorik}. Diploma Thesis, University of Bonn. Available from http://nbn-resolving.de/urn:nbn:de:bsz:ch1-qucosa-94793 (2008) \label{ref: Master's Thesis Clusters and Triangulations} 

\end{enumerate}

\end{document}